\documentclass[11pt]{amsart}

\usepackage{amsmath, amsfonts, amssymb,amsthm}
\usepackage{amstext}
\usepackage{mathrsfs}

\usepackage{float,epsf,subfigure}
\usepackage[all,cmtip]{xy}
\usepackage{hyperref}
\usepackage{algorithm}
\usepackage{algorithmic}
\usepackage{mathtools}
\usepackage{float}
\usepackage{bm}
\theoremstyle{plain}
\newtheorem{theorem}{Theorem}[section]

\newtheorem{cor}[theorem]{Corollary}

\newtheorem{def-thm}[theorem]{Definition-Theorem}
\newtheorem{lemma}[theorem]{Lemma}

\newtheorem*{corob}{Corollary B}

\newtheorem*{tha}{Theorem A}

\theoremstyle{definition}

\newtheorem{remark}[theorem]{Remark}

\def\min{\mathop{\mathrm{min}}}

\begin{document}
\title[On  Picard's  Problem via Nevanlinna Theory II]{On  Picard's  Problem via Nevanlinna Theory II}
\author[X.-J. Dong]
{Xianjing Dong}

\address{School of Mathematical Sciences \\ Qufu Normal University \\ Qufu, Jining, Shandong, 273165, P. R. China}
\email{xjdong05@126.com}


\subjclass[2020]{32H30; 32H25; 32A22} 
\keywords{Carlson-Griffiths theory; Picard  theorem;  Green function; Heat kernel; K\"ahler manifolds; Ricci curvature}
\date{}
\maketitle \thispagestyle{empty} \setcounter{page}{1}

\begin{abstract}   This  work  continues   the author's earlier   work  (2026, Studia Mathematica)  on     Picard's problem:     is  every    meromorphic function on a complete noncompact K\"ahler manifold with nonnegative Ricci curvature  necessarily   a  constant,  if   it avoids   $3$ distinct values? In that prior    work,   a positive     answer was  obtained   under a growth condition  for  non-parabolic manifolds.  In this paper,    we   give  a full    solution to the   non-parabolic case by removing this growth condition via  a global Green function approach. For the  parabolic case,    to overcome the obstacle arising from  the absence  of a positive global  Green function,  we  introduce   a   heat kernel approach to Nevanlinna theory.  Based on  it,   we develop a Carlson-Griffiths theory, which gives the first systematic  result in Nevanlinna theory for parabolic K\"ahler manifolds. As a direct  application, we confirm  the parabolic case of Picard's problem under a  weak  growth condition.
   \end{abstract}

\setlength\arraycolsep{2pt}
\medskip

\vskip\baselineskip

\section{Introduction}

The classical  Picard theorem is one of the most   fundamental and profound results in complex analysis, 
which claims  
  that any nonconstant meromorphic function on $\mathbb C$ can avoid  at most $2$ values.
Its high-dimensional generalization  to meromorphic functions and holomorphic mappings on complex manifolds has become a central  theme 
in several complex variables  over the past several decades, 
with  numerous  results established      
(see  \cite{at1, Ad, Dong1, nonpara, Fuj,  Gr1, Gold0, Gold, CC1, CC2, CC3, Kob, Pet, CC4}). 
 A long-standing   problem    concerns    the extensions of  Picard  theorem  
   to  complete   noncompact   K\"ahler manifolds with nonnegative Ricci curvature, which is the so-called  Picard's problem---a natural  problem   motivated by  
 S.-T. Yau's
  celebrated     1975   work \cite{Yau}   on  the Liouville  theorem.  
We formalize   this problem   as  follows:  

\noindent\textbf{Picard's Problem.}   \emph{Let $M$ be a complete noncompact K\"ahler manifold with  nonnegative Ricci curvature.    
 Is a meromorphic function  on $M$ necessarily  a  constant  if it avoids  $3$ distinct values$?$}

Picard's problem  lies at the intersection of  Nevanlinna theory  and  K\"ahler geometry, which has attracted extensive attention  and 
many  classical  results  have  been  obtained    under    various   conditions. 
 For instance, S. Kobayashi \cite{Kob}  confirmed     the problem for  $M$  on which      a  complex Lie group   acts  transitively;   
    Goldberg-Ishihara-Petridis \cite{Gold} gave   an affirmative answer to   the problem   for  
    a    holomorphic mapping  $f: M\to \mathbb P^1(\mathbb C)\setminus\{0, 1, \infty\}$ with   bounded dilatation, 
     provided that       $M$ is   locally flat; 
                                                    A. Atsuji \cite{at1}  (see also \cite{Dong3})   showed   that 
                                                        every  nonconstant slowly growing meromorphic function on $M$ can avoid at most $2$ values;  etc. 
                                                                                                                               Unfortunately,  all existing results in the  literature are subject to   essential restrictions, leaving 
                                                                 the general unconditional case still open.

 It is well-known that  the Nevanlinna theory,   established      by  R. Nevanlinna  \cite{Nev} in 1925,  presents    a  pivotal   advancement     of   the classical Picard  theorem.  
  As  a powerful  theoretical  tool,   
     its  generalization  to complex manifolds is of 
          substantial significance. 
            For more  details,    we refer   to the  important      papers   
       \cite{at0, at1,  ahlfors, Cart, gri, Dong1,  gri1, Ng, nochka, ru00,  Shi, Sa,   Stoll,  wu}  and the   monographs  
    \cite{Ko, No, ru, Shabat}. Recently, the  author  \cite{nonpara}  revisited  
  the Picard's  problem  via    Nevanlinna theory,  
  who generalized         
  the   Carlson-Griffiths theory \cite{gri, gri1}  
                    to    non-parabolic complete    
       K\"ahler   manifolds  with   nonnegative Ricci curvature by introducing a novel global Green function approach. 
          This  prior work \cite{nonpara}
    gives   
           a positive   answer  to the non-parabolic case  of Picard's problem  under a     certain    growth condition. 
 We           recall  the main result in \cite{nonpara} 
           to set the stage for this  work.

  Let $M$ be a complete noncompact K\"ahler manifold with Laplace-Beltrami operator $\Delta.$ Let $p(t,x,y)$ denote  the  heat kernel  of  $M$ associated to $\Delta$ (see Section \ref{sec22}). 
  We say that $M$ is \emph{non-parabolic} if  
  $$G(x,y)=2\int_0^\infty p(t,x, y)dt<\infty, \ \ \ \     x\not=y,$$
 and \emph{parabolic} otherwise.  
When $M$ is  non-parabolic,     $G(x,y)$  is   
  the    minimal positive  global Green function of $\Delta/2$ for $M.$ 
    For further   details, we   refer  to   \cite{Li-Tam, L-T-W, Va0, Va}.  
    Note that the notion of parabolicity used here differs from that introduced by  W. Stoll \cite{Stoll}. 
  In the sense of Stoll,  a complex manifold is  parabolic if  it admits   a parabolic exhaustion  function. For Riemann  surfaces, 
  however,  the two notions  coincide.

  Fix a reference point $o\in M.$  Let $V(r)$ denote        
  the Riemannian volume of the geodesic ball   centered at $o$ with radius $r$ in $M,$ 
   and let $\rho(x)$ denote    the Riemannian distance function of    $x$ from $o.$ 
  Under the assumptions  that $M$   is  non-parabolic and   has   nonnegative  Ricci  curvature,   
 the  Li-Yau's estimate  \cite{Li-Yau} gives   constants $A, B>0$ such that  
 $$A\int_{\rho(x)}^\infty\frac{tdt}{V(t)}\leq G(o,x)\leq B\int_{\rho(x)}^\infty\frac{tdt}{V(t)}$$
 holds for all $x\in M.$ 
 For $r>0,$   define    
  $$\Delta(r)=\left\{x\in M: \    G(o,x)>A\int_r^\infty\frac{tdt}{V(t)}\right\}$$
   and  
 $$g_r(o,x)=G(o,x)-A\int_r^\infty\frac{tdt}{V(t)}.$$
 So, $o\in\Delta(r)$ for all $r>0,$ and  $\{\Delta(r)\}_{r>0}$ is a family of precompact  domains which exhausts   $M,$  and $g_r(o,x)$ is  the Green function of $\Delta/2$ for $\Delta(r)$ with a pole at $o$ satisfying Dirichlet boundary condition.    
  
   Let  $N$ be a complex projective manifold,  over which one  puts a Hermitian positive line bundle $(L, h)$ with  Chern form $c_1(L, h)>0.$
 Choose    an effective   divisor  $D\in|L|,$ where  $|L|$ denotes   the complete linear system of $L.$  Moreover, 
 we   use  $K_N$ to denote  the canonical line bundle over $N.$ 
   Let  $f: M\to N$ be a  meromorphic mapping.    
By   \cite{nonpara} (or see Section \ref{sec32}),  in terms of    $g_r(o,x),$
          one   can define  the  Nevanlinna's functions 
        $T_f(r, L), m_f(r,D), N_f(r,D), \overline{N}_f(r,D),$ $T_f(r, K_N)$ and $T(r,\mathscr R)$ on $\Delta(r),$ where $\mathscr R$ denotes  the Ricci form of $M.$  
 
 Set
 \begin{equation*}\label{Hr}
H(r,\delta)=\frac{1}{r}\left(\frac{V(r)}{r}\right)^{1+\delta}\int_{r}^\infty\frac{tdt}{V(t)}.
 \end{equation*} 
 \ \ \ \  The main result of \cite{nonpara} is the following  Second Main Theorem.  

     \begin{tha}[Dong, \cite{nonpara}]\label{main11}  
 Let $M$ be a non-parabolic  complete  non-compact  K\"ahler manifold with non-negative Ricci curvature. 
 Let $N$ be a complex projective manifold of complex dimension not greater than that  of $M.$
 Let $D\in|L|$ be a reduced divisor of simple normal crossing type,  where $L$ is a positive line bundle over $X.$ Let $f:M\rightarrow N$ be a differentiably non-degenerate meromorphic mapping.  Then  for any $\delta>0,$ there exists a subset $E_\delta\subseteq(0, \infty)$ of finite Lebesgue measure such that 
$$T_f(r,L)+T_f(r, K_N)+T(r, \mathscr R)\leq \overline N_f(r,D)+O\left(\log^+T_f(r,L)+\log H(r,\delta)\right)$$
holds for all $r>0$ outside $E_\delta.$  
\end{tha}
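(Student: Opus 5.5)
We follow the Carlson--Griffiths scheme adapted to the Green function $g_r(o,x)$ of $\Delta(r)$. The main tools are the Dynkin/Green--Jensen formula on $\Delta(r)$ and a calculus lemma controlling boundary integrals. The argument goes in four steps.

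\emph{Step 1: the singular volume form.} Since $D$ is simple normal crossing, we build the Carlson--Griffiths singular volume form on $N$,
$$\Psi=\frac{\Omega}{\prod_j\|s_j\|^2(\log(\alpha\|s_j\|^2))^2},$$
where $\Omega$ is a volume form with $\mathrm{Ric}\,\Omega+c_1(L)>0$, and the constant $\alpha$ is chosen small. Then $\mathrm{Ric}\,\Psi\geq c\,\Psi^{1/n}$, and on $N\setminus D$ we have $dd^c\log\Psi\geq c_1(L)+c_1(K_N)$ modulo the terms $dd^c\log\log$. Because $f$ is differentiably non-degenerate and $\dim N\le\dim M$, we can write $f^*\Psi\wedge\alpha_M^{m-n}=\xi\,\alpha_M^m$ for a nonnegative function $\xi$ on $M$ that is not identically zero. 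Here $\alpha_M$ is the K\"ahler form of $M$.

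\emph{Step 2: Jensen formula for $\log\xi$.} We apply the Green--Jensen formula on $\Delta(r)$ to $\log\xi$, writing $d\pi_r$ for the harmonic measure on $\partial\Delta(r)$. Unwinding $dd^c\log\xi$ produces $T_f(r,L)+T_f(r,K_N)+T(r,\mathscr R)$. The zeros of $\xi$ coming from ramification contribute a nonnegative counting term. The poles along $f^*D$ contribute at most $N_f(r,D)-\overline N_f(r,D)$, and combining this with the First Main Theorem $T_f(r,L)=N_f(r,D)+m_f(r,D)+O(1)$ leaves only $\overline N_f(r,D)$ on the right. The $\log\log$ terms contribute $O(\log^+T_f(r,L))$. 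What remains is the boundary term
$$\tfrac12\int_{\partial\Delta(r)}\log\xi\,d\pi_r.$$

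\emph{Step 3: bounding the boundary term.} This is the main obstacle. By concavity of $\log$, the boundary term is at most $\log\int_{\partial\Delta(r)}\xi\,d\pi_r$. The task is then to relate $\int_{\partial\Delta(r)}\xi\,d\pi_r$ to $\frac{d}{dr}$ of a characteristic-type integral $\int_{\Delta(r)}g_r\,\xi^{1/n}\,dv$, which is itself bounded by $T_f(r,L)$ via Step 1. For this we need pointwise control of $d\pi_r=\frac12\partial_{\nu}g_r\,d\sigma$ and of $\|\nabla G(o,\cdot)\|$ on $\partial\Delta(r)$. Cheng--Yau gradient estimates for the positive harmonic function $G(o,\cdot)$, combined with the two-sided Li--Yau bound, give
$$\|\nabla G\|\lesssim\frac{G}{\rho}\lesssim\frac1\rho\int_\rho^\infty\frac{t\,dt}{V(t)}.$$
The same two-sided bound shows that $\partial\Delta(r)$ lies in an annulus $\{c_1r\le\rho\le c_2r\}$. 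These estimates are what produce the factor $\frac1r\bigl(\frac{V(r)}{r}\bigr)\int_r^\infty\frac{t\,dt}{V(t)}$.

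\emph{Step 4: Borel lemma.} Finally we apply the Borel growth lemma twice: if $u$ is increasing, then $u'\le u^{1+\delta}$ outside a set of finite measure. This turns the derivatives into powers, $\log\int_{\partial\Delta(r)}\xi\,d\pi_r\le O(\log^+T_f(r,L)+\log H(r,\delta))$ outside $E_\delta$. Combining with Step 2 gives the inequality.
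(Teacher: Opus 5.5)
Theorem A is only quoted in this paper, so I compare your proposal with the proof of Theorem~\ref{main}, its refinement. Your skeleton matches the paper's: Carlson--Griffiths form, Dynkin's formula for $\log\xi$ on $\Delta(r)$, Jensen, and a calculus lemma via Borel's lemma applied twice.

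\textbf{The real difference: how $d\pi_r$ is controlled.} The paper uses Lemma~\ref{thm4} and Theorem~\ref{hh} to assert $\|\nabla g_r\|=At/V(t)$ on $\partial\Delta(t)$. This gives the coarea formula in the form $dv=dt\,d\sigma_t$ and the exact identity $d\pi_r=\frac{A}{2}\frac{r}{V(r)}d\sigma_r$, which produce the factor $(V(r)/r)^{\delta}$. You use only an upper bound $\|\nabla G\|\lesssim\Phi(r)/r$ on $\partial\Delta(r)$, where $\Phi(r)=\int_r^\infty t\,dt/V(t)$.

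The ratio of the two bounds is $V(r)\Phi(r)/r^2$. That is exactly what turns $(V(r)/r)^\delta$ into $H(r,\delta)$, so your route is calibrated to Theorem A. It is also the more robust route. Write $G(o,x)=A\Phi(\tau(x))$. The paper's identity amounts to $\|\nabla\tau\|\equiv1$, i.e.\ $G(o,x)=A\Phi(\rho(x))$, which is a rigid situation (e.g.\ flat $\mathbb{C}^m$ with the optimal $A$). The Cheng--Yau bound, by contrast, always holds.

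\textbf{Points that need repair.}

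First, the annulus claim is false in general. The lower Li--Yau bound does give $\rho\ge r$ on $\partial\Delta(r)$. The upper bound only gives $\Phi(\rho)\ge(A/B)\Phi(r)$. This yields $\rho\le cr$ when $\Phi$ decays polynomially, but not in general: for $\Phi(s)\sim1/\log s$ it allows $\rho$ up to $r^{B/A}$. You do not need the annulus. On $\partial\Delta(r)$ one has $G=A\Phi(r)$ exactly and $\rho\ge r$, so Cheng--Yau on $B(x,\rho(x)/2)$ gives $\|\nabla G\|\le C\Phi(r)/r$ directly.

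Second, Jensen must be applied as $\int\log\xi\,d\pi_r\le n\log\int\xi^{1/n}d\pi_r$. Only $\xi^{1/n}$, not $\xi$, is dominated by a trace whose $g_r$-integral is a characteristic function.

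Third, the coarea step needs the correct Jacobian. On $\partial\Delta(t)$ one has $dv=\frac{At}{V(t)}\frac{d\sigma_t}{\|\nabla G\|}dt$ and $g_r=A(\Phi(t)-\Phi(r))$. Set $d\mu_t=d\sigma_t/\|\nabla G\|$ and $\Lambda(r)=\int_{\Delta(r)}g_rk\,dv$. Then $\frac{d}{dr}\bigl(\frac{V(r)}{r}\Lambda'(r)\bigr)=A^2\frac{r}{V(r)}\int_{\partial\Delta(r)}k\,d\mu_r$, while $d\pi_r=\frac12\|\nabla G\|^2d\mu_r$. So the gradient bound enters squared, and Borel twice yields $\int_{\partial\Delta(r)}k\,d\pi_r\le C\,H(r,\delta/2)^2\Lambda(r)^{(1+\delta)^2}$. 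This is still $O(\log H(r,\delta))$, since $V(r)\ge cr$.

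Finally, positivity of $c_1(L)+c_1(K_N)$ cannot be arranged in general; take two points on $\mathbb{P}^1$. You only need $\xi^{1/n}$ to be dominated by the trace of some positive form, such as $\lambda c_1(L,h)$ plus the Poincar\'e-type terms. Its characteristic function is $O(T_f(r,L))$, which is all the argument requires.
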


 \begin{corob}[Picard-type Theorem]\label{dde2}    Let $M$ be a non-parabolic complete noncompact   K\"ahler manifold with non-negative Ricci curvature.  Then,  every  meromorphic  function  
on $M$  can avoid  at most $2$  distinct values  if $f$ satisfies the growth condition 
$$\lim_{r\to\infty}\frac{\log r}{T_f(r, \omega_{FS})}=0,$$
where $\omega_{FS}$ is the Fubini-Study form on $\mathbb P^1(\mathbb C).$
\end{corob}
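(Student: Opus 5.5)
We argue by contradiction. Let $f$ be a nonconstant meromorphic function on $M$ satisfying the growth condition and omitting three distinct values $a_1,a_2,a_3\in\mathbb P^1(\mathbb C)$; we derive a contradiction from Theorem A. Take $N=\mathbb P^1(\mathbb C)$ and $L=\mathcal O(1)$ with the Fubini-Study metric, so $c_1(L,h)=\omega_{FS}$ and $T_f(r,L)=T_f(r,\omega_{FS})$. Let $D=a_1+a_2+a_3$, a reduced divisor of simple normal crossing type in $|\mathcal O(3)|=|L^{\otimes 3}|$. Since $f$ is nonconstant and $\dim_{\mathbb C}N=1\le\dim_{\mathbb C}M$, the differential $df$ has rank $1$ somewhere, so $f$ is differentiably non-degenerate. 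Theorem A is stated for $L$ positive with $D\in|L|$, so we apply it with the positive bundle $L^{\otimes 3}$, for which $T_f(r,L^{\otimes3})=3T_f(r,\omega_{FS})$. We also use $K_{\mathbb P^1}=\mathcal O(-2)$, which gives $T_f(r,K_N)=-2T_f(r,\omega_{FS})+O(1)$.

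Because $f$ omits each $a_j$, we have $\overline N_f(r,D)=0$. The Ricci curvature is nonnegative, so $\mathscr R\ge0$ and hence $T(r,\mathscr R)\ge0$. Theorem A therefore yields
$$T_f(r,\omega_{FS})\le O\big(\log^+T_f(r,\omega_{FS})+\log H(r,\delta)\big)$$
for all $r\notin E_\delta$.

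The main step is to bound $\log H(r,\delta)$ by $O(\log r)$, possibly outside a further set of finite measure. By Bishop-Gromov volume comparison on a manifold with $\mathrm{Ric}\ge0$, we have $V(r)\le Cr^{2m}$, where $m=\dim_{\mathbb C}M$. Hence $(V(r)/r)^{1+\delta}\le Cr^{(2m-1)(1+\delta)}$.

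For the factor $\frac1r\int_r^\infty t\,dt/V(t)$, volume doubling gives $V(t)\ge (t/r)\,c\,V(r)$ for $t\ge r$. This comes from the lower bound $V(t)\ge c\,(t/r)V(r)$ valid for complete noncompact $\mathrm{Ric}\ge0$ manifolds (Yau, Calabi). That lower bound alone does not make the integral converge. However, non-parabolicity says $\int_1^\infty t\,dt/V(t)<\infty$, so $\int_r^\infty t\,dt/V(t)\le\int_1^\infty t\,dt/V(t)=:K$ for $r\ge1$. Consequently $H(r,\delta)\le C'r^{(2m-1)(1+\delta)}$, and $\log H(r,\delta)=O(\log r)$. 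This is the step I expect to need the most care, but it appears to follow from these two standard facts.

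Combining the estimates gives $T_f(r,\omega_{FS})\le O(\log^+T_f(r,\omega_{FS}))+O(\log r)$ for $r\notin E_\delta$. Since $f$ is nonconstant, $T_f(r,\omega_{FS})\to\infty$, so $\log^+T_f=o(T_f)$ and therefore $T_f(r,\omega_{FS})\le O(\log r)$ off $E_\delta$. Because $E_\delta$ has finite measure, there are arbitrarily large $r\notin E_\delta$. Along such $r$, $\log r/T_f(r,\omega_{FS})$ stays bounded below by a positive constant, which contradicts the growth condition $\lim_{r\to\infty}\log r/T_f(r,\omega_{FS})=0$. Hence $f$ must be constant.
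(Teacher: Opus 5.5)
Your proof is correct and takes the natural route: Theorem~A on $\mathbb P^1(\mathbb C)$ with $D$ the three omitted points in $|\mathcal O(3)|$, together with $T_f(r,K_N)=-2T_f(r,\omega_{FS})+O(1)$, $T(r,\mathscr R)\ge 0$, $\overline N_f(r,D)=0$, and $\log H(r,\delta)\le O(\log r)$ from $V(r)\le Cr^{2m}$ and $\int_1^\infty t\,dt/V(t)<\infty$ (the paper only cites this corollary from its earlier work, and for its own Corollary~1.3 it argues through the defect relation rather than directly). One small fix: deduce $T_f(r,\omega_{FS})\to\infty$ from the growth hypothesis rather than from nonconstancy, because on a non-parabolic $M$ one only knows that $T_f(r,\omega_{FS})$ increases to $\frac12\int_M G(o,x)e_{f,\omega_{FS}}\,dv$, which is not a priori infinite; your remark about the Calabi--Yau lower bound on $V$ can also be dropped, since it plays no role.
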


            While   this  corollary  presents     a  Picard-type   theorem,  it   does not      settle        the non-parabolic case of Picard's problem at all, 
            nor does   it  provide  any   insight  into  the parabolic case of Picard's problem.  
                                                                                        In this  paper,   
 we     consider       both  the  non-parabolic and   parabolic   Picard's problem. Our investigation   attains  two core objectives. 
            Firstly, we refine the Carlson-Griffiths theory developed in \cite{nonpara} with an   optimal   error term $\delta\log r$ instead   of the  error term $\log H(r,\delta).$ 
                        So, this  refinement   enables us to provide        
                            a full  unconditional solution to the non-parabolic Picard's problem. 
Secondly,  in order  to overcome the obstacle caused by the nonexistence of a 
 positive global Green function  
for parabolic manifolds, we 
introduce a novel heat kernel 
approach to  Nevanlinna theory---a    geometric-analytic tool served   to  define  Nevanlinna's functions via the heat kernel in place of Green function. With that   entirely  new setting, 
we   develop     a Carlson-Griffiths theory    for  parabolic  manifolds and  give    a positive    answer 
       to the parabolic Picard's problem 
              under a  weak   growth condition.  
           Here,    the   global Green 
                    function approach and  heat kernel approach 
 used    in this paper  are  new   tools  introduced by the author   that are 
                                                                                    expected to facilitate further  progress in  Nevanlinna theory for complex manifolds.  

   We    state  the main results below. 
 Let   $M$ be  a   complete noncompact  K\"ahler  manifold with non-negative Ricci curvature. 
 Let $(N, \omega)$ be a compact K\"ahler manifold with Ricci form $\mathscr R.$   
A  divisor $D$ on $N$ is said to be \emph{cohomologous to} $\omega,$ if there exists  a function $u_D\geq 0$ on $N$ with  
 $\omega-[D]=2dd^c[u_D]$ in the sense of currents.  Let $f: M\to N$ be a meromorphic mapping.

\noindent\textbf{A.  Non-parabolic Carlson-Griffiths  Theory}

For a non-parabolic manifold $M,$  as detailed  in  Section \ref{sec32},  
 we  well    define the  Nevanlinna's functions 
 $T_f(r,\omega), m_f(r, D), N_f(r,D), \overline{N}_f(r, D), T_f(r, K_N)$ and   $T(r,\mathscr R)$ on $\Delta(r)$  defined   before.  
    Based on a global Green approach, we establish  the  following  Second Main Theorem. 

     \begin{theorem}[=Theorem \ref{main}]\label{main11}  
 Let $M$ be an $m$-dimensional  non-parabolic complete noncompact    K\"ahler manifold with nonnegative Ricci curvature. 
 Let $(N, \omega)$ be a compact  K\"ahler manifold of complex dimension $n\leq m.$
 Let $D_1,\cdots, D_q$ be effective divisors in general position on $N$ such that each $D_j$ is cohomologous to $\omega.$
  Let $f:M\rightarrow N$ be a differentiably non-degenerate meromorphic mapping.  Assume that $q\omega-{\rm Ric}(\omega^n)>0.$ Then  for any $\delta>0,$ there exists a subset $E_\delta\subseteq(0, \infty)$ of finite Lebesgue measure such that 
$$qT_f(r, \omega)+T_f(r, K_N)+T(r, \mathscr R)\leq \sum_{j=1}^q\overline N_f(r,D_j)+O\left(\log^+T_f(r,\omega)+ \delta\log r\right)$$
holds for all $r>0$ outside $E_\delta.$    \end{theorem}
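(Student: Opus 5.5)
We follow the Carlson--Griffiths scheme as adapted in \cite{nonpara}, working on the exhaustion $\Delta(r)$ with the Green function $g_r(o,x)$ and harmonic measure $\pi_r$ on $\partial\Delta(r)$. The only genuine change from Theorem A is the error term, so the plan is to reuse the structure of \cite{nonpara} and refine the calculus lemma responsible for $\log H(r,\delta)$.

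\textbf{Step 1 (singular volume form).} Since the $D_j$ are in general position and each is cohomologous to $\omega$, we write $\omega-[D_j]=2dd^c[u_{D_j}]$, so $u_{D_j}$ is a logarithmic potential with $u_{D_j}=-\log\|s_j\|^2$ up to a bounded term. We form the singular volume form
$$\Phi=\frac{\omega^n}{\prod_{j=1}^q e^{-u_{D_j}}\,\bigl(\log (c\,e^{-u_{D_j}})\bigr)^2}$$
on $N$. The assumption $q\omega-{\rm Ric}(\omega^n)>0$, together with the standard Carlson--Griffiths computation, gives
$$\mathrm{Ric}(\Phi)\ge C\,\Phi^{1/n}\quad\text{and}\quad \mathrm{Ric}(\Phi)\ge q\omega-\mathrm{Ric}(\omega^n)-(\text{terms }dd^c\log\log).$$

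\textbf{Step 2 (pullback and Jensen--Dynkin).} Since $f$ is differentiably non-degenerate, we write $f^*\Phi\wedge\alpha^{m-n}=\xi\,\alpha^m$ with $\xi\ge0$, where $\alpha$ is the Kähler form of $M$. We then apply the Dynkin/Green--Jensen formula for $g_r(o,\cdot)$ on $\Delta(r)$ to $\log\xi$:
$$\frac12\int_{\partial\Delta(r)}\log\xi\,d\pi_r-\frac12\log\xi(o)=\int_{\Delta(r)}g_r(o,x)\,dd^c\log\xi\wedge\alpha^{m-1}\ (\text{normalised}).$$
Using $dd^c\log\xi\ge f^*\mathrm{Ric}(\Phi)-\mathscr R+[\text{ramification}]$ and the Step 1 identity, the right side is bounded below by
$$qT_f(r,\omega)+T_f(r,K_N)+T(r,\mathscr R)-\sum_j N_f(r,D_j)+N_{\rm Ram}-O(\log^+T_f)$$
(the $\log\log$ terms are handled by the First Main Theorem). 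Combining $N_f-N_{\rm Ram}$ yields $\overline N_f$ by general position. It remains to show
$$\int_{\partial\Delta(r)}\log\xi\,d\pi_r\le O(\log^+T_f(r,\omega)+\delta\log r).$$

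\textbf{Step 3 (calculus lemma, the main obstacle).} By concavity of $\log$, $\int\log\xi\,d\pi_r\le\log\int\xi\,d\pi_r$. The task is to bound $\int_{\partial\Delta(r)}\xi\,d\pi_r$ by $r^{\delta}T(r)^{1+\delta}$, where $T(r)=\int_{\Delta(r)}g_r\,\xi\alpha^m$ (which is $\lesssim T_f(r,\omega)$ using $\xi\le C(f^*\Phi^{1/n}\wedge\alpha^{m-1}/\alpha^m)^n$ and the AM--GM step). In \cite{nonpara} this was done in the variable $r$, where the Li--Yau comparison introduced $V(r)$ and hence $H(r,\delta)$.

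Instead we reparametrise by the level $t=A\int_r^\infty s\,ds/V(s)$, so that $\partial\Delta(r)=\{G(o,\cdot)=t\}$ exactly. By the co-area formula and $d\pi_r=\frac12\|\nabla G\|\,d\sigma$, we get
$$\frac{d}{dr}\int_{\Delta(r)}g_r\,\xi\,dv=-\frac{dt}{dr}\int_{\Delta(r)}\xi\,dv$$
and
$$\frac{d}{dt}\int_{\Delta(r)}\xi\,dv=\int_{\{G=t\}}\frac{\xi}{\|\nabla G\|}\,d\sigma.$$
The obstacle is the gradient factor: we must relate $\int\xi/\|\nabla G\|$ to $\int\xi\|\nabla G\|$. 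For this we use the Cheng--Yau/Li--Yau gradient estimate $\|\nabla G\|\le C\,G/\rho$ together with the two-sided bound $G\asymp\int_\rho^\infty s\,ds/V(s)$. These make $\|\nabla G\|$ comparable on $\partial\Delta(r)$ to $-dt/dr$, with the volume factors cancelling in the ratio. Applying the Borel growth lemma twice, first in $t$ and then in $r$, converts the derivative bounds into
$$\int\xi\,d\pi_r\le r^{\delta}\,T(r)^{(1+\delta)^2}$$
outside a set $E_\delta$ of finite measure. The key point is that $r\,dt/dr$ divided by $t$ is controlled purely by $V$ ratios that cancel, leaving only $r^{\delta}$ from Borel's lemma applied to $\log r$.

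Taking logarithms gives the $O(\log^+T_f(r,\omega)+\delta\log r)$ error, and the theorem follows.
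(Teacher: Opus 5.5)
\textbf{The gap is in Step 3, the only new ingredient of this theorem.} Steps 1 and 2 are the same Carlson--Griffiths reduction the paper uses: Dynkin's formula for $\log\xi$, Jensen plus the First Main Theorem for the $\log u_{D_j}$ terms, and ramification absorbed into $\overline N_f$. Two small slips there are easy to fix:
\begin{itemize}
\item Jensen must be applied to $\xi^{1/n}$, i.e.\ $\int\log\xi\,d\pi_r\le n\log\int\xi^{1/n}d\pi_r$. Only $\int_{\Delta(r)}g_r\xi^{1/n}dv$ is $O(T_f(r,\omega))$ by (\ref{oppo}); your $\int g_r\xi\,dv$ is not.
\item Borel's lemma must be applied in $r$, not $t$. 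A set of finite $dt$-measure need not have finite $dr$-measure, since $|dr/dt|=V(r)/(Ar)$ is unbounded.
\end{itemize}

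The real problem is that the Cheng--Yau estimate gives only an upper bound. On $\partial\Delta(r)$ we have $G=A\int_r^\infty s\,ds/V(s)$ and $\rho\ge r$, so it reads
$$\|\nabla G\|\le \frac{C}{r}\,A\int_r^\infty\frac{s\,ds}{V(s)}=C\,\frac{Ar}{V(r)}\,Q(r),\qquad Q(r)=\frac{V(r)}{r^{2}}\int_r^\infty\frac{s\,ds}{V(s)}.$$
So the volume factors do not cancel: in fact $r|t'(r)|/t(r)=1/Q(r)$. Bishop--Gromov gives $Q(r)\ge c>0$, but nothing in the hypotheses bounds $Q$ from above. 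Non-parabolicity only says the integral converges, with no rate; you would need an extra hypothesis such as $V(s)/V(r)\ge c(s/r)^{2+\eta}$ for $s\ge r$.

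Run your co-area/Borel argument honestly, using $\int k\|\nabla G\|\,d\sigma\le \sup\|\nabla G\|^2\int k\|\nabla G\|^{-1}d\sigma$. You get
$$\int_{\partial\Delta(r)}k\,d\pi_r\le C\,Q(r)^2\Big(\frac{V(r)}{r}\Big)^{\delta}\Big(\int_{\Delta(r)}g_r k\,dv\Big)^{(1+\delta)^2}.$$
Since $Q(r)(V(r)/r)^\delta=H(r,\delta)$, this is essentially Theorem A's error $\log H(r,\delta)$ again. In general that is only $O(\log r)$, not $O(\delta\log r)$.

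\textbf{How the paper avoids the factor, and why you cannot simply borrow it.} The paper never meets this factor. It uses Theorem \ref{hh} and Corollary \ref{bbb}: the exact identity $\|\nabla g_r\|=Ar/V(r)$ on $\partial\Delta(r)$, so $d\pi_r=\frac A2\frac{r}{V(r)}d\sigma_r$. It also writes the co-area formula as $\int_{\Delta(r)}F\,dv=\int_0^r dt\int_{\partial\Delta(t)}F\,d\sigma_t$. Borel applied twice then leaves only $\frac12(V(r)/r)^\delta$. Your Step 3 would reduce to this if you had $\sup_{\partial\Delta(r)}\|\nabla G\|\lesssim r/V(r)$.

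However, this is exactly the obstacle you flagged, and the paper does not settle it either. Let $\tau$ be the level parameter, so $\Delta(t)=\{\tau<t\}$. Lemma \ref{thm4} gives $g_r=A\int_\tau^r s\,ds/V(s)$, hence $\|\nabla g_r\|=\frac{A\tau}{V(\tau)}\|\nabla\tau\|$, and the co-area formula carries the Jacobian $\|\nabla\tau\|^{-1}$. The paper's formulas therefore amount to $\|\nabla\tau\|\equiv1$. The proof of Theorem \ref{hh} does not establish this: it differentiates in $t$ rather than along the unit normal. Already on $\mathbb C^m$ the identity fails unless $A$ is the sharp constant, though there only by a constant factor. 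With both corrections you recover precisely your extra factor $\sup_{\partial\Delta(r)}\|\nabla\tau\|^2\le C\,Q(r)^2$.

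So the missing ingredient is a bound $\sup_{\partial\Delta(r)}\|\nabla\tau\|=r^{o(1)}$, or an added volume-growth hypothesis that makes $Q$ bounded. Without it, the claimed $\delta\log r$ error term is not proved.
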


Note  that Theorem \ref{main11}  extends   Theorem A. The \emph{simple defect}  of $f$ with respect to $D$  is defined   by
$$ \bar\delta_f(D)=1-\limsup_{r\rightarrow\infty}\frac{\overline{N}_f(r,D)}{T_f(r,\omega)}.$$
Set 
$$\left[\frac{c_1(K_N^*)}{\omega}\right]=\inf\left\{s\in\mathbb R: \ \eta< s\omega;  \  \   ^\exists\eta\in c_1(K^*_N)\right\}.$$
\begin{cor}[Defect Relation]\label{app1}  Assume the same conditions as in Theorem $\ref{main11}.$ Then 
$$\bar\delta_f(D)
\leq  \left[\frac{c_1(K_N^*)}{\omega}\right].$$
\end{cor}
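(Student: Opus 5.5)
The plan is to read the defect relation off Theorem \ref{main11} by dividing through by $T_f(r,\omega)$ and letting $r\to\infty$ outside the exceptional set. Here I read $\bar\delta_f(D)$ as the total simple defect $\sum_{j=1}^q\bar\delta_f(D_j)$ of the divisors $D_1,\cdots,D_q$ in the statement of Theorem \ref{main11}. Fix $s>\left[c_1(K_N^*)/\omega\right]$ and choose $\eta\in c_1(K_N^*)$ with $\eta<s\omega$. I will control the two curvature terms on the left side of Theorem \ref{main11} from below.

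\emph{Step 1: the Ricci term.} Since $M$ has nonnegative Ricci curvature, $\mathscr R\geq 0$. The characteristic $T(r,\mathscr R)$ is an integral of $\mathscr R$ against the nonnegative weight $g_r(o,\cdot)$, so $T(r,\mathscr R)\geq0$ and this term can be dropped.

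\emph{Step 2: the canonical term.} Since $c_1(K_N)=-c_1(K_N^*)$, any metric on $K_N$ has Chern form $-\eta+dd^c\phi$ for some smooth $\phi$ on the compact manifold $N$. Applying the Green--Jensen formula on $\Delta(r)$ to $\phi\circ f$, the $dd^c\phi$ part contributes only boundary and center terms bounded by $2\sup|\phi|$. Hence
$$T_f(r,K_N)=-T_f(r,\eta)+O(1)\geq -sT_f(r,\omega)+O(1),$$
where the inequality uses $s\omega-\eta>0$, so that $T_f(r,s\omega-\eta)\geq0$.

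\emph{Step 3: combine.} Substituting Steps 1 and 2 into Theorem \ref{main11} gives, for $r\notin E_\delta$,
$$(q-s)T_f(r,\omega)\leq\sum_{j=1}^q\overline N_f(r,D_j)+O\bigl(\log^+T_f(r,\omega)+\delta\log r\bigr).$$
Divide by $T_f(r,\omega)$. This needs two growth facts for nonconstant (indeed differentiably non-degenerate) $f$. The first is $T_f(r,\omega)\to\infty$, which makes the $\log^+T_f$ term negligible. The second is the lower bound $\liminf_{r\to\infty}T_f(r,\omega)/\log r\geq c>0$, which keeps $\delta\log r/T_f(r,\omega)\leq C\delta/c$ small.

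I expect the second fact to be the main obstacle. I would prove it by comparing $g_r(o,x)$ with $A\int_{\rho(x)}^r t\,dt/V(t)$ via the Li--Yau estimate, and then using that $f^*\omega$ is a nonzero nonnegative form on some fixed ball. This yields $T_f(r,\omega)\geq c'\int_{r_0}^r t\,dt/V(t)$, which is at least of order $\log r$ because $V(t)\leq Ct^{2m}$ (Bishop). If the $\log r$ lower bound fails, the fallback is that the $O(\cdot)$ constant does not depend on $\delta$, so the contribution is at most $C\delta\log r$. Letting $\delta\to0$ still gives the claim, provided $T_f\gtrsim\log r$ along a sequence.

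\emph{Step 4: pass to the limit.} Writing $\overline N_f(r,D_j)/T_f(r,\omega)=1-(1-\overline N_f(r,D_j)/T_f(r,\omega))$, summing over $j$, taking the limsup over $r\to\infty$ with $r\notin E_\delta$, and using $\limsup_{r\notin E_\delta}\leq\limsup_r$, I obtain
$$q-s\leq q-\sum_{j=1}^q\bar\delta_f(D_j)+O(\delta).$$
Then let $\delta\to0$ and $s\downarrow\left[c_1(K_N^*)/\omega\right]$. Since the $\limsup$ in the definition of the defect runs over all $r$, and the restricted limsup only decreases the relevant quantity, the finite-measure exceptional set causes no trouble here.
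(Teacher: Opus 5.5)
\textbf{Verdict.} Your route is the standard one, and presumably the one the paper intends: divide the Second Main Theorem (Theorem \ref{main}) by $T_f(r,\omega)$. The paper itself gives no argument for this corollary. Steps 1, 2 and 4 are fine, and reading $\bar\delta_f(D)$ as $\sum_j\bar\delta_f(D_j)$ matches how the paper applies the corollary to hyperplanes. You also correctly see that everything hinges on a growth fact the paper never states. Because the error term contains $\delta\log r$, the division is useless unless $\log r=O(T_f(r,\omega))$, at least for $r$ in a set of infinite measure. But your argument for that fact does not work, so there is a genuine gap in Step 3.

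\textbf{Where Step 3 fails.} On a fixed ball $B(r_0)$, the bound $g_r(o,x)\geq A\int_{\rho(x)}^r t\,dt/V(t)$ gives $T_f(r,\omega)\geq c'\int_{r_0}^r t\,dt/V(t)$. This lower bound stays bounded as $r\to\infty$: non-parabolicity of $M$ is exactly the condition $\int_1^\infty t\,dt/V(t)<\infty$. Your appeal to $V(t)\leq Ct^{2m}$ only gives $\int_{r_0}^r t^{1-2m}\,dt$, which converges for $m\geq 2$. Moreover $m\geq2$ is forced: if $m=1$, then $V(t)\leq Ct^2$ and $M$ would be parabolic. So the fixed-ball argument yields only $T_f(r,\omega)\geq c>0$. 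That gives neither $T_f\to\infty$ (needed to absorb $\log^+T_f$) nor $T_f\gtrsim\log r$. Your fallback ``along a sequence'' is equally unsupported.

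\textbf{What would be needed.} To get logarithmic growth by this route you must let the ball grow. Since $B(r)\subseteq\Delta(r)$, Fubini gives
$$T_f(r,\omega)\geq\frac{A}{2}\int_0^r\frac{t}{V(t)}\Big(\int_{B(t)}e_{f,\omega}\,dv\Big)dt.$$
What you actually need is therefore a Lelong-type mass lower bound $\int_{B(t)}e_{f,\omega}\,dv\geq c\,V(t)/t^2$ for large $t$. In $\mathbb C^m$ this is the monotonicity of $t^{2-2m}\int_{B(t)}f^*\omega\wedge\alpha^{m-1}$. Nothing in your proposal or in the paper provides such a bound on a general non-parabolic K\"ahler manifold with nonnegative Ricci curvature. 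An error term free of $\delta\log r$ would also remove the need for it. Without one of these, the corollary does not follow from Theorem \ref{main} alone.
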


In particular,  we equip  $N=\mathbb P^n(\mathbb C)$ with  Fubini-Study metric $\omega_{FS}.$ 
Taking  $D=H_1+\cdots+H_q,$ where $H_1,\cdots, H_q$ are  $q$ distinct hyperplanes in general position  in $\mathbb P^n(\mathbb C).$  
A basic  fact asserts  that 
$$\left[\frac{c_1\big(K_{\mathbb P^n(\mathbb C)}^*\big)}{\omega_{FS}}\right]=n+1,$$
which  gives   that 
$\bar\delta_f(H_1)+\cdots+\bar\delta_f(H_q)\leq n+1.$
 It is therefore:  
\begin{cor}[Picard-type Theorem]\label{dde1}    Let $M$ be a non-parabolic complete noncompact   K\"ahler manifold with nonnegative Ricci curvature.  Then, 
 every  meromorphic  function   on $M$  reduces  to a constant if it omits $3$  distinct values. 
\end{cor}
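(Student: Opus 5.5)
Take a meromorphic function $f$ on $M$ that omits three distinct values; we show $f$ is constant. Compose with a Möbius transformation, which changes neither the hypotheses nor the conclusion, so that the omitted values are $0,1,\infty$. Regard $f$ as a holomorphic map $f:M\to\mathbb P^1(\mathbb C)$ and suppose $f$ is nonconstant. Since $f$ is a nonconstant holomorphic map into a curve, its differential has rank $1$ on a dense open set. Hence $f$ is differentiably non-degenerate in the sense required by Theorem \ref{main11} with $n=1\le m$.

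Apply Theorem \ref{main11} with $N=\mathbb P^1(\mathbb C)$, $\omega=\omega_{FS}$, and $D_1=0$, $D_2=1$, $D_3=\infty$, so $q=3$. These points are in general position. Each is cohomologous to $\omega_{FS}$: take $u_{D_j}$ to be a suitable nonnegative shift of $\log(\|w\|/|\langle w,a_j\rangle|)$, which is bounded below. The positivity hypothesis also holds, since $3\omega_{FS}-{\rm Ric}(\omega_{FS})$ is a positive multiple of $\omega_{FS}$.

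Because $f$ omits $0,1,\infty$, we have $f^*D_j=0$ and hence $\overline N_f(r,D_j)=0$ for $j=1,2,3$. The Ricci curvature of $M$ is nonnegative, so $T(r,\mathscr R)\ge 0$. Also $T_f(r,K_{\mathbb P^1})=-2T_f(r,\omega_{FS})+O(1)$, since $c_1(K_{\mathbb P^1})=-2[\omega_{FS}]$. Theorem \ref{main11} then gives
$$T_f(r,\omega_{FS})\le O\big(\log^+T_f(r,\omega_{FS})+\delta\log r\big)$$
for $r\notin E_\delta$. Equivalently, one can apply Corollary \ref{app1} with $n=1$, where each defect equals $1$, to get $3\le 2$ directly; this works once $T_f$ is known to dominate $\log r$.

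The main obstacle is that the displayed bound does not immediately contradict itself: we must show $T_f(r,\omega_{FS})$ grows faster than $\delta\log r$ for every fixed $\delta$. Since the constant in $O(\cdot)$ does not depend on $\delta$, it is enough to prove $\liminf_{r\to\infty}T_f(r,\omega_{FS})/\log r>0$. Then choosing $\delta$ small and $r\to\infty$ outside $E_\delta$, which is possible because $E_\delta$ has finite measure, forces $T_f(r,\omega_{FS})\le o(\log r)$, a contradiction.

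For the lower bound, write $T_f(r,\omega_{FS})=\int_{\Delta(r)}g_r(o,x)f^*\omega_{FS}\wedge\alpha^{m-1}$ as in Section \ref{sec32}. Since $f$ is nonconstant, $f^*\omega_{FS}$ is positive on some small ball $B$ near $o$. For large $r$, $g_r(o,x)\ge c\int_{\rho(x)}^r t\,dt/V(t)$ on $B$ by the Li--Yau bounds. Volume comparison gives $V(t)\le Ct^{2m}$, which shows this integral is unbounded in $r$. I expect the comparison with $\log r$ itself to need a slightly finer argument, for example through the First Main Theorem or the paper's sharpened error-term estimates. The growth rate needed is presumably exactly what the refinement from $\log H(r,\delta)$ to $\delta\log r$ was designed to provide, and this step is the one I would verify most carefully.
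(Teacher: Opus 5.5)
Your reduction is the same as the paper's. The paper deduces the corollary from the non-parabolic defect relation with $N=\mathbb P^1(\mathbb C)$, $q=3$ and $[c_1(K^*_{\mathbb P^1(\mathbb C)})/\omega_{FS}]=2$. That relation rests on the same specialization of Theorem~\ref{main} that you write down, $T_f(r,\omega_{FS})\le O(\log^+T_f(r,\omega_{FS})+\delta\log r)$ off $E_\delta$, and your check of the hypotheses is fine.

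The last step, however, is a genuine gap, and you located it correctly: nothing proves $\liminf_{r\to\infty}T_f(r,\omega_{FS})/\log r>0$. The paper's passage from Theorem~\ref{main} to the defect relation tacitly needs the same bound. Your sketch for it does not work. The integral $\int_{\rho(x)}^r t\,dt/V(t)$ is \emph{bounded} in $r$, because that is exactly the non-parabolicity criterion $\int_1^\infty t\,dt/V(t)<\infty$. Likewise, $V(t)\le Ct^{2m}$ only gives $t/V(t)\ge C^{-1}t^{1-2m}$, which is integrable at infinity for $m\ge 2$. More generally, $g_r(o,\cdot)\le G(o,\cdot)$, so a fixed compact set contributes a bounded amount to $T_f(r,\omega)$. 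By Lemma~\ref{thm4} and Fubini,
\[
T_f(r,\omega)=\frac{A}{2}\int_0^r\frac{s}{V(s)}\Big(\int_{\Delta(s)}e_{f,\omega}\,dv\Big)ds,
\]
so $T_f(r,\omega)\gtrsim\log r$ requires, on average, a Lelong-type growth $\int_{\Delta(s)}e_{f,\omega}\,dv\gtrsim V(s)/s^2$. On $\mathbb C^m$ this follows from Lelong monotonicity, i.e.\ plurisubharmonicity of $\log|z|$. Nothing in the hypotheses (only $\mathrm{Ric}\ge 0$) supplies such a monotonicity, and neither the First Main Theorem nor Corollary~\ref{calculus1} gives lower bounds of this kind. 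If $\int_M G(o,x)e_{f,\omega}\,dv<\infty$, then $T_f$ stays bounded and the Second Main Theorem says nothing for large $r$.

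The gap is easiest to close by leaving Nevanlinna theory at this point.
\begin{itemize}
\item After your M\"obius normalization, $f$ has no poles, so it is a holomorphic map $M\to\mathbb C\setminus\{0,1\}$.
\item Let $\pi:\tilde M\to M$ be the universal covering. With the pulled-back metric, $\tilde M$ is complete K\"ahler with nonnegative Ricci curvature.
\item The map $f\circ\pi$ lifts through the covering $\lambda:\mathbb D\to\mathbb C\setminus\{0,1\}$ (the elliptic modular function) to a holomorphic $F:\tilde M\to\mathbb D$.
\item Then $\mathrm{Re}\,F$ and $\mathrm{Im}\,F$ are bounded pluriharmonic, hence harmonic, functions on $\tilde M$. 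By Yau's Liouville theorem they are constant, so $f$ is constant.
\end{itemize}
This argument uses neither non-parabolicity nor any growth hypothesis, so it covers the parabolic case as well. The Nevanlinna-theoretic route, by contrast, still needs the missing growth estimate.
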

 
\noindent\textbf{B.  Parabolic  Carlson-Griffiths Theory}

When $M$ is a parabolic manifold, 
 the lack of  the minimal  positive global Green function renders the global Green function approach ineffective since    
   $\Delta(r)$ cannot be defined using  $G(o,x)$ $(\equiv\infty).$  
   To overcome  this difficulty,  we introduce  the heat kernel of $M$ to  construct $\Delta(r).$

Fix a reference point $o\in M.$  By   Li-Yau's estimate  \cite{Li-Yau}, for any $0<\epsilon<1,$ there exist two constants $c_1,c_2>0$ such that  $p(t,o,x)$ satisfies the following two-sided Gaussian-type estimate: 
\begin{equation*}\label{eett}
\frac{c_1}{2V(\sqrt{t})}e^{-\frac{\rho(x)^2}{4(1-\epsilon)t}}\leq p(t,o,x)\leq \frac{c_2}{2V(\sqrt{t})}e^{-\frac{\rho(x)^2}{4(1+\epsilon)t}},
\end{equation*}
where $V(\sqrt t)$ stands for   the Riemannian volume of the geodesic ball centered at $o$ with radius $\sqrt{t}$ in $M,$ and $\rho(x)$ stands for   the Riemannian distance from  $x$  to $o.$
In particular,  we have $\epsilon=0$ and $c_1=c_2=1/2$ for $M=\mathbb C.$  Put
$$G_r(o,x)=2\int_0^rp(t,o,x)dt.$$
For $r>0,$ define 
$$\Delta(r)=\left\{x\in M: \     G_r(o,x)>c_1\int_0^r\frac{1}{V(\sqrt{t})}e^{-\frac{r^2}{4(1-\epsilon)t}}dt\right\}.$$
  Note that  $o\in\Delta(r)$ for all $r>0,$ and  the family $\{\Delta(r)\}_{r>0}$ exhausts $M$ (see Section \ref{sec41}). 
We further  define  
$$h_r(o,x)=G_r(o,x)-c_1\int_0^r\frac{1}{V(\sqrt{t})}e^{-\frac{r^2}{4(1-\epsilon)t}}dt.$$
The function  $h_r(o,x)$  vanishes  identically  on  the boundary  $\partial\Delta(r),$ but  is 
 not  the Green function for $\Delta(r).$ 
  As detailed  in Section \ref{sec42},  by virtue of    $h_r(o,x),$       the 
            Nevanlinna's functions $T_f(r,\omega), m_f(r, D), N_f(r,D), \overline{N}_f(r, D), T_f(r,K_N)$ and $T(r,\mathscr R)$ can be  well-defined  on $\Delta(r).$      

Hereafter,  the notation  $``\|"$  indicates    an inequality  which   holds outside a possible  exceptional subset of $r\in(0,\infty)$ of  finite Lebesgue measure.  
 Based on a heat  kernel  approach, we establish  the  following  Second Main Theorem. 

     \begin{theorem}[=Theorem \ref{main220}]\label{ttt1}  
Let $M$ be an $m$-dimensional  parabolic complete noncompact    K\"ahler manifold with nonnegative Ricci curvature. 
 Let $(N, \omega)$ be a compact  K\"ahler manifold of complex dimension $n\leq m.$
 Let $D_1,\cdots, D_q$ be effective divisors in general position on $N$ such that each $D_j$ is cohomologous to $\omega.$
  Let $f:M\rightarrow N$ be a differentiably non-degenerate meromorphic mapping.  Assume that $q\omega-{\rm Ric}(\omega^n)>0.$ Then  for any $0<\epsilon<1,$ we have 
  $$qT_f(r, \omega)+T_f(r, K_N)+T(r, \mathscr R)\leq \sum_{j=1}^q\overline N_f(r,D_j)+O\big(r^m\log^+T_f(\theta r,\omega)\big)  \big\|$$
holds,  where $\theta=2(1+\epsilon)/(1-\epsilon).$  
\end{theorem}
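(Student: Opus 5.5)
The plan follows the standard Carlson--Griffiths scheme, with the Green function replaced by $h_r(o,x)$. The two new ingredients are a Jensen--Dynkin formula adapted to $h_r$ and a calculus lemma controlling the error terms.

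First I will establish a Jensen-type formula on $\Delta(r)$. For a function $u$ of class $C^2$ (or plurisubharmonic with singularities), Green's identity applied to $h_r(o,\cdot)$ on $\Delta(r)$ gives
$$\int_{\partial\Delta(r)}u\,d\pi_r-u(o)=\frac12\int_{\Delta(r)}h_r(o,x)\Delta u\,dv+\int_{\Delta(r)}u\,\Delta_x h_r(o,x)\,dv,$$
where $d\pi_r=-\frac12\partial_{\vec n}h_r\,d\sigma$ is the associated harmonic-type measure. Since $h_r$ is not a Green function, an extra term appears:
$$\tfrac12\Delta_xG_r(o,x)=\int_0^r\partial_tp\,dt=p(r,o,x)-\delta_o .$$
This gives the correction $\int_{\Delta(r)}u(x)p(r,o,x)dv$, which is a heat average of $u$ at time $r$. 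I will also check that $d\pi_r\ge0$, which follows from $h_r>0$ inside and $h_r=0$ on $\partial\Delta(r)$, and that $\pi_r$ has bounded total mass. With this formula, applied to $u=\log\|s_{D}\circ f\|^{-2}$, I obtain the First Main Theorem $T_f(r,\omega)=m_f(r,D)+N_f(r,D)+O(1)$ up to the heat-average term. I will arrange the definitions of Section \ref{sec42} so that this term is absorbed or bounded.

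Second, I will follow Carlson--Griffiths. I fix the singular volume form $\Psi=\omega^n/\prod_j\|s_j\|^2(\log\|s_j\|^2)^2$. By general position and $q\omega-{\rm Ric}(\omega^n)>0$, the form $\mathrm{Ric}\,\Psi$ dominates a positive form. Pulling back, I write $f^*\Psi\wedge\alpha^{m-n}=\xi\,\alpha^m$, with $\alpha$ the Kähler form of $M$. Then $dd^c[\log\xi]$ is bounded below by $f^*(q\omega)-f^*{\rm Ric}(\omega^n)+\mathscr R$ minus the ramification divisor, together with the reduced counting correction. Applying the Jensen formula to $\log\xi$ yields
$$qT_f(r,\omega)+T_f(r,K_N)+T(r,\mathscr R)-\sum_j\overline N_f(r,D_j)\le \frac12\int_{\partial\Delta(r)}\log\xi\,d\pi_r+O(1).$$
The remaining task is to bound the boundary integral.

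Third, the calculus lemma, which I expect to be the main obstacle. I use concavity of $\log$ to get
$$\int_{\partial\Delta(r)}\log\xi\,d\pi_r\le\log\int_{\partial\Delta(r)}\xi\,d\pi_r.$$
The integral $\int_{\partial\Delta(r)}\xi\,d\pi_r$ must then be compared with $\frac{d}{dr}$ of $\int_{\Delta(r)}h_r\xi\,dv$, via co-area and a gradient estimate $\|\nabla h_r\|$ on $\partial\Delta(r)$. This comparison requires the two-sided Gaussian bounds. For $x\in\partial\Delta(r)$, comparing the lower bound $G_r\ge c_1\int_0^r V(\sqrt t)^{-1}e^{-\rho^2/4(1-\epsilon)t}dt$ with the defining threshold shows $\rho(x)\le r$ on $\Delta(r)$. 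Using the upper bound, points with $\rho(x)\le r/\theta$ lie inside $\Delta(r)$, and inverting this gives $\Delta(r)\subset B(\theta r)$ roughly, which is where $\theta=2(1+\epsilon)/(1-\epsilon)$ enters. Li--Yau gradient estimates for $p$ and volume growth $V(r)\le Cr^{2m}$ (Bishop) give polynomial factors. Then the Borel growth lemma, $\frac{d}{dr}F\le F^{1+\delta}$ outside a finite-measure set, applied twice, together with $\int_{\Delta(r)}h_r\,\xi\,dv\lesssim T_f(\theta r,\omega)$ (via $f^*\Psi\le$ a combination of $f^*\omega$ and $dd^c$ of bounded-log terms), yields
$$\log\int_{\partial\Delta(r)}\xi\,d\pi_r\le O\big(r^m\log^+T_f(\theta r,\omega)\big)\ \|.$$
The factor $r^m$ arises from crude bounds on $\|\nabla h_r\|^{-1}$ and the density of $\pi_r$ on the boundary. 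I would first try to bound these bounds by comparing $V(\sqrt t)$ at scales up to $r$ using volume doubling, and I accept the resulting polynomial loss in $r$.
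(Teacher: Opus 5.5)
There is a genuine gap in your second step. When you apply the $h_r$-version of the Dynkin formula (Lemma \ref{dynkin1}) to $\phi=\log\xi$, the heat-average correction from your first step does not disappear:
\[
\frac14\int_{\Delta(r)}h_r(o,x)\Delta\log\xi\,dv=\frac12\int_{\partial\Delta(r)}\log\xi\,d\Theta_r+\frac12\int_{\Delta(r)}p(r,o,x)\log\xi\,dv+O(1).
\]
So the correct version of your displayed inequality is (\ref{gjj22}). Its right-hand side contains $\frac12\int_{\Delta(r)}p(r,o,x)\log\xi\,dv$, and also $q\log^+T_f(r,\omega)$ rather than $O(1)$.

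This interior term cannot be absorbed into definitions, because the inequality to be proved involves only $T_f$, $T(r,\mathscr R)$ and $\overline N_f$. It is not sign-definite, since $\xi$ blows up along $f^{-1}(D)$ inside $\Delta(r)$. The boundary calculus lemma does not touch it. It is exactly the source of the error $O(r^m\log^+T_f(\theta r,\omega))$.

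The paper bounds it in Lemma \ref{wppp}:
\begin{itemize}
\item it uses $p(r,o,x)\le c_2/2V(\sqrt r)$;
\item it applies Jensen with respect to volume on $\Delta(r)\subseteq B(\beta r)$;
\item Bishop--Gromov gives $V(\beta r)/V(\sqrt r)\le\beta^{2m}r^m$, and this is where $r^m$ comes from.
\end{itemize}
This leaves the unweighted integral $\int_{\Delta(r)}\xi^{1/n}dv$. Since $h_r$ vanishes on $\partial\Delta(r)$, this cannot be dominated by $\int_{\Delta(r)}h_r\xi^{1/n}dv$. The missing idea is to enlarge the radius, using the lower bound $h_{\theta r}(o,x)\ge c\,rV(\sqrt r)^{-1}e^{-\theta^2r/8(1-\epsilon)}$ on $\Delta(r)$ (Lemma \ref{mei1}, Corollary \ref{mei2}). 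That is where $\theta=2\beta$ and $T_f(\theta r,\omega)$ enter. By contrast, the boundary term costs only $O(\log^+T_f(r,\omega)+r)$ via Corollary \ref{calculus12}. So attributing $r^m$ and $\theta r$ to gradient or density bounds on $\partial\Delta(r)$ is mistaken. The Borel-lemma argument on $\int_{\Delta(r)}h_rk\,dv$ naturally yields $T_f(r,\omega)$, and nothing in your boundary analysis produces $T_f(\theta r,\omega)$.

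There are three further problems.
\begin{itemize}
\item \textbf{Using $\xi$ instead of $\xi^{1/n}$.} For $n\ge2$ you cannot work with $\xi$ itself. The integral $\int_{\Delta(r)}h_r\xi\,dv$ has degree $n$ in $df$ and is not controlled by $T_f(r,\omega)$; your ``$f^*\Psi\le$ a combination of $f^*\omega$ and $dd^c$ terms'' compares a $2n$-form with $2$-forms. Write $\log\xi=n\log\xi^{1/n}$ and use the pointwise bound (\ref{oppo}), $\xi^{1/n}\le C\,\mathrm{tr}_\alpha(-f^*\mathrm{Ric}\,\Psi)$. This follows from $(-\mathrm{Ric}\,\Psi)^n\ge c\Psi$ (Lemma \ref{grii}) and the arithmetic--geometric mean inequality, and it gives $\int_{\Delta(r)}h_r\xi^{1/n}dv\le C\,T_f(r,-\mathrm{Ric}\,\Psi)$.
\item \textbf{Normalization in Jensen.} $\Theta_r$ has total mass $1-\int_{\Delta(r)}p(r,o,x)dv<1$ (Corollary \ref{dynkin2}), so Jensen needs a normalization. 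This only costs $O(1)$.
\item \textbf{Reversed ball comparisons.} The lower Gaussian bound shows that points with $\rho(x)<r$ lie in $\Delta(r)$. Hence $B(r)\subseteq\Delta(r)$ and $\rho\ge r$ on $\partial\Delta(r)$, which is what Lemma \ref{Theta} needs. The upper bound shows that distant points lie outside, so $\Delta(r)\subseteq B(\beta r)$ for large $r$ (Theorem \ref{beta}). The upper bound can never place points inside $\Delta(r)$.
\end{itemize}
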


 Letting $\epsilon\to0,$  we  conclude    a  defect relation:  
\begin{cor}[Defect Relation]\label{app1}  Assume the same conditions as in Theorem $\ref{ttt1}.$ If
$$\lim_{r\to\infty}\frac{r^m\log^+T_f(2r,\omega)}{T_f(r,\omega)}=0,$$
then 
$$\bar\delta_f(D)
\leq  \left[\frac{c_1(K_N^*)}{\omega}\right].$$
\end{cor}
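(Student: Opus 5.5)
The plan is to derive the defect relation from Theorem \ref{ttt1} by dividing through by $T_f(r,\omega)$ and letting $r\to\infty$ outside the exceptional set. Here $D=D_1+\cdots+D_q$, and $\bar\delta_f(D)$ is read as $\sum_j\bar\delta_f(D_j)$, in parallel with the projective case.

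\textbf{Step 1: bound the curvature terms from below.}
\begin{itemize}
\item Since $M$ has nonnegative Ricci curvature, the Ricci form $\mathscr R\geq 0$. Because $h_r(o,x)\ge 0$ on $\Delta(r)$, this gives $T(r,\mathscr R)\geq 0$ as the paper defines it, so the term can be dropped.
\item Fix any $s>[c_1(K_N^*)/\omega]$. By definition there is $\eta\in c_1(K_N^*)$ with $\eta<s\omega$.
\item The characteristic function $T_f(r,K_N)$ is independent of the choice of metric on $K_N$ up to $O(1)$. This comes from the First Main Theorem type argument: changing the representative by $dd^c$ of a bounded function changes $T_f$ by a bounded term, using the Green-like identity for $h_r$ from Section \ref{sec42}.
\item Hence $T_f(r,K_N)=-T_f(r,K_N^*)\geq -sT_f(r,\omega)-O(1)$.
\end{itemize}

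\textbf{Step 2: assemble the inequality and take limits.}
Theorem \ref{ttt1} then gives
$$(q-s)T_f(r,\omega)\le \sum_{j=1}^q\overline N_f(r,D_j)+O\big(r^m\log^+T_f(\theta r,\omega)\big)+O(1)\ \big\|.$$
Divide by $T_f(r,\omega)$ and let $r\to\infty$ outside the exceptional set.

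\textbf{Step 3: the main obstacle, controlling the error term.}
The hypothesis only involves $T_f(2r,\omega)$, while the theorem gives $\theta r$ with $\theta=2(1+\epsilon)/(1-\epsilon)>2$. This is why the statement says "letting $\epsilon\to 0$".

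\emph{Caveat.} A single limit hypothesis at $2r$ does not directly control $\theta r$ for $\theta>2$. I would handle this in one of two ways, and flag it as the delicate point:
\begin{itemize}
\item Rerun the proof of Theorem \ref{ttt1} with the calculus lemma (Borel-type growth lemma) applied at scale $(1+\epsilon')r$ instead of $2r$. The factor $\theta$ arises from combining the Gaussian bounds with a growth lemma, and the latter's parameter can be taken arbitrarily close to $1$. Then $\theta'\to 2$ as $\epsilon\to0$, and one can arrange $\theta'\le 2$ in the limiting argument.
\item Alternatively, interpret the hypothesis as uniform in the dilation factor near $2$.
\end{itemize}
Either way the error term becomes $o(T_f(r,\omega))$.

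\textbf{Step 4: conclude.}
We also need $T_f(r,\omega)\to\infty$. This holds because $f$ is differentiably non-degenerate, so $f^*\omega\not\equiv0$; moreover $\int_0^r V(\sqrt t)^{-1}dt\to\infty$ on a parabolic manifold, which gives unbounded growth of $T_f$. The $O(1)$ term is therefore negligible, and we obtain
$$q-s\le \limsup_{r\to\infty}\sum_j\frac{\overline N_f(r,D_j)}{T_f(r,\omega)}.$$
Using $\limsup\sum\le\sum\limsup$, this rearranges to $\sum_j\bar\delta_f(D_j)\le s$. Letting $s\downarrow[c_1(K_N^*)/\omega]$ completes the proof.
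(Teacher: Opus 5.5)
Your Steps 1, 2 and 4 are the routine derivation the paper has in mind, and they are sound. The paper's entire argument is the phrase ``letting $\epsilon\to0$.'' In particular:
\begin{itemize}
\item $T(r,\mathscr R)\ge0$.
\item $T_f(r,K_N)\ge -sT_f(r,\omega)-O(1)$ by Lemma~\ref{dynkin1}, because $\Theta_r(\partial\Delta(r))+\int_{\Delta(r)}p(r,o,x)\,dv=1$.
\item $T_f(r,\omega)\to\infty$. Justify this by noting that $h_r(o,\cdot)\to\infty$ locally uniformly, since $\int_1^\infty dt/V(\sqrt t)=\infty$ while the constant subtracted in $h_r$ tends to $0$. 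Your integral $\int_0^r dt/V(\sqrt t)$ is infinite for every $r$, so it does not do this job.
\item $\limsup\sum\le\sum\limsup$.
\end{itemize}

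\textbf{The gap is Step~3.} You leave it open, and your first remedy rests on a wrong diagnosis. The Borel lemma in Theorem~\ref{calculus11} acts at the fixed radius $r$ and produces no dilation. The factor $\theta$ enters only through Lemma~\ref{mei1}, which Corollary~\ref{mei2} and Lemma~\ref{wppp} use to control the residual term $\int_{\Delta(r)}p(r,o,x)\log\xi\,dv$. What is needed there is a quantitative lower bound for $h_{\theta r}(o,\cdot)$ on $\Delta(r)\subseteq B(\beta r)$.

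Moreover, ``letting $\epsilon\to0$'' is not a legitimate step, for you or for the paper. The sets $\Delta(r)$, the functions $h_r$, the measures $\Theta_r$, and hence $T_f$ and $\overline N_f$, are all built from $c_1(\epsilon)$ and the exponent $r^2/(4(1-\epsilon)t)$. So $\epsilon$ must stay fixed, and the hypothesis refers to that fixed choice. Also, a $\theta'$ tending to $2$ from above cannot be ``arranged'' to satisfy $\theta'\le 2$. Your second remedy changes the hypothesis, so it does not prove the statement.

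\textbf{The repair.} Lemma~\ref{mei1} only needs $\theta>\beta$. Fix $\epsilon<1/3$, so that $\beta=(1+\epsilon)/(1-\epsilon)<2$; this is all that ``$\epsilon\to0$'' can honestly mean. Take $\theta=2$. For $x\in\Delta(r)$ and large $r$ one has $\rho(x)\le\beta r$, hence
\[
h_{2r}(o,x)\ \ge\ c_1\int_0^{2r}\frac{1}{V(\sqrt s)}\Big(e^{-\frac{\beta^2r^2}{4(1-\epsilon)s}}-e^{-\frac{r^2}{(1-\epsilon)s}}\Big)ds\ \ge\ \frac{c_1r}{4V(\sqrt r)}\,e^{-\frac{\beta^2r}{2(1-\epsilon)}}.
\]
The integrand is nonnegative, and on $[r/2,r]$ the bracket is at least half its first term. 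Feeding this into Corollary~\ref{mei2} and Lemma~\ref{wppp} replaces $T_f(\theta r,\omega)$ by $T_f(2r,\omega)$. The factor $r^m$ in Lemma~\ref{wppp} comes from $V(\beta r)/V(\sqrt r)\le\beta^{2m}r^m$ and is unchanged. Your Steps 2 and 4 then finish the proof, with no limit in $\epsilon$ and no monotonicity of $T_f$ needed.

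\textbf{A caution if you actually rerun the proof.} Lemma~\ref{wppp} as written produces a term $r^m\cdot O(r)$. The displayed error of Theorem~\ref{ttt1} silently drops it, and the stated hypothesis does not control it. That proof also replaces $p$ by its supremum in front of $\log\xi$, which may be negative. Both problems are avoided as follows:
\begin{itemize}
\item apply Jensen's inequality to the probability measure $\Theta_r+p(r,o,\cdot)\,dv|_{\Delta(r)}$ of Corollary~\ref{dynkin2};
\item use $p(r,o,x)\le Ce^{cr}h_{2r}(o,x)$ on $\Delta(r)$, which follows from the display above.
\end{itemize}
The loss then becomes an additive $O(r)$, which is $o(T_f(r,\omega))$ under the stated hypothesis.
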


We say that $f$ satisfies the  \emph{double growth property}  if 
$$T_f(2r, \omega)\leq O(T_f(r,\omega)).$$
Under a growth condition,  an affirmative  answer is given as follows.   

\begin{cor}[Picard-type Theorem]\label{}    Let $M$ be an $m$-dimensional parabolic complete noncompact   K\"ahler manifold with nonnegative Ricci curvature.  Then, 
 every  meromorphic  function  $f$ on $M$  can avoid  at most $2$  distinct values  if
 $$\lim_{r\to\infty}\frac{r^m\log^+T_f(2r,\omega_{FS})}{T_f(r,\omega_{FS})}=0.$$
 In particular, the conclusion holds  if $f$ satisfies the  double growth property with      
 $$T_f(r,\omega_{FS})\geq O(r^{m+\epsilon_0})$$
 for some   $\epsilon_0>0.$
\end{cor}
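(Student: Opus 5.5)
The plan is to apply Theorem \ref{ttt1} with $N=\mathbb P^1(\mathbb C)$, $\omega=\omega_{FS}$, and $D_j$ the three omitted points. Let $f$ be a nonconstant meromorphic function on $M$ omitting three distinct values $a_1,a_2,a_3\in\mathbb P^1(\mathbb C)$, and regard $f$ as a meromorphic mapping $f:M\to\mathbb P^1(\mathbb C)$. Here $n=1\le m$. Each point divisor $D_j=a_j$ is cohomologous to $\omega_{FS}$: normalize so that $\int\omega_{FS}=1$ and take $u_{D_j}=\log(\|w\|/|\langle w,a_j\rangle|)\geq 0$ up to the normalization constant. Distinct points are trivially in general position. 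A nonconstant meromorphic function has $df\not\equiv0$, hence is differentiably non-degenerate. Since ${\rm Ric}(\omega_{FS})=2\omega_{FS}$, the hypothesis $q\omega-{\rm Ric}(\omega^n)>0$ holds for $q=3$.

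Next, read off the terms in the Second Main Theorem. Because $f$ omits each $a_j$, we have $\overline N_f(r,D_j)=0$. Because $M$ has nonnegative Ricci curvature, $T(r,\mathscr R)\ge 0$: it is an integral of $h_r(o,x)\geq0$ on $\Delta(r)$ against $\mathscr R\wedge\alpha^{m-1}\ge0$, and this positivity is what the paper's definition in Section \ref{sec42} should give. Finally $T_f(r,K_{\mathbb P^1})=-2T_f(r,\omega_{FS})+O(1)$, since $c_1(K_{\mathbb P^1})=-2[\omega_{FS}]$ and the Nevanlinna characteristic depends on the form only up to a bounded term (by Jensen/dd^c with $h_r$). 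Theorem \ref{ttt1} then yields, for fixed $0<\epsilon<1$,
$$T_f(r,\omega_{FS})\le O\big(r^m\log^+T_f(\theta r,\omega_{FS})\big)+O(1)\ \big\|.$$

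To reach a contradiction we must pass from $\theta r$ to $2r$. Here $\theta=2(1+\epsilon)/(1-\epsilon)>2$, which is the delicate point. The hypothesis is phrased with $2r$, so one route is the defect relation (Corollary \ref{app1} in the parabolic setting), which the paper obtains by letting $\epsilon\to0$. I would invoke it directly. It gives $\sum_{j=1}^3\bar\delta_f(a_j)\le[c_1(K^*_{\mathbb P^1})/\omega_{FS}]=2$. On the other hand, $\overline N_f(r,a_j)=0$ gives $\bar\delta_f(a_j)=1$, so the sum equals $3>2$, a contradiction.

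If one instead argues directly, the growth hypothesis says $r^m\log^+T_f(2r)=o(T_f(r))$. One then needs $T_f(r)\to\infty$ for nonconstant $f$ on parabolic $M$ (at least like $\log r$ type growth), and a way to absorb the exceptional set. A standard Borel-type lemma, or the monotonicity of $T_f$ combined with adjusting $\epsilon$, handles the finite-measure exceptional set. The main obstacle I expect is exactly the $\theta r$ versus $2r$ discrepancy together with the exceptional set. I would try to absorb it by choosing $\epsilon$ small and using that the left side is a fixed multiple of $T_f(r)$ while $\log^+$ is insensitive to polynomial changes of scale whenever $T_f$ grows at most polynomially. The "in particular" part is simple by comparison. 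If $T_f(2r)\le CT_f(r)$, then $\log^+T_f(2r)\le\log^+T_f(r)+O(1)$. Together with $T_f(r)\ge cr^{m+\epsilon_0}$ this gives $r^m\log T_f(r)/T_f(r)\to0$: doubling implies polynomial growth, so $\log T_f(r)=O(\log r)$, and $r^m\log r/r^{m+\epsilon_0}\to0$.
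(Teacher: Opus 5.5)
Your argument is the paper's: specialize to $N=\mathbb P^1(\mathbb C)$, $\omega=\omega_{FS}$ and the three omitted points, apply the parabolic defect relation (stated right after Theorem \ref{ttt1}) with $[c_1(K^*_{\mathbb P^1(\mathbb C)})/\omega_{FS}]=2$ against $\bar\delta_f(a_j)=1$, and get the ``in particular'' clause from the fact that doubling forces polynomial growth. One caveat: the paper bridges the $\theta r$ versus $2r$ step you flag only by the remark ``letting $\epsilon\to0$'', even though the $O$-constants, and indeed $\Delta(r)$ and $T_f$ themselves, depend on $\epsilon$; your direct repair, by contrast, is rigorous only when $T_f(r,\omega_{FS})$ is polynomially bounded, e.g.\ under the doubling hypothesis, where $\log^+T_f(\theta r,\omega_{FS})=O(\log r)$ turns Theorem \ref{ttt1} into $T_f(r,\omega_{FS})\le O(r^m\log r)$ off a set of finite measure, contradicting $T_f(r,\omega_{FS})\ge cr^{m+\epsilon_0}$ directly.
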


\noindent\textbf{C. Examples}

 From  Section \ref{sec22} later, 
  $M$ is non-parabolic if and only if 
\begin{equation*}
\int_1^\infty\frac{tdt}{V(t)}<\infty. 
\end{equation*}
Applying  this volume criterion,   we note  that $\mathbb C^m$ is  non-parabolic  for  $m\geq2,$ and   parabolic   for $m=1.$  We provide  some nontrivial  examples below. 

  \noindent{a)  \emph{Non-parabolic  K\"ahler manifolds with nonnegative Ricci curvature}}
    \begin{enumerate}
     \item[$\bullet$]  Equip $\mathbb C^m\setminus\{\textbf{0}\}$   with  the metric $ds^2=\|dz\|^2/\|z\|^2$ 
          ($z=(z_1,\cdots,z_m)$).  
     For $m\geq2,$     $\mathbb C^m\setminus\{\textbf{0}\}$ 
           is a non-parabolic   noncompact K\"ahler manifold with zero  Ricci curvature;
    \item[$\bullet$]   Note   that any  compact Riemannian  manifold has     finite  volume. 
     Let  $T^n$ denote  an $n$-dimensional  complex torus equipped with the  metric induced 
 from  the standard Euclidean metric on $\mathbb C^n.$
  When  $m\geq2,$  by endowing    the product metric, 
  one   can see   from 
    the volume criterion that 
       $\mathbb C^m\times T^n$ is a non-parabolic   noncompact K\"ahler manifold with zero  Ricci curvature, 
   and  $\mathbb C^m\times \mathbb P^n(\mathbb C)$ is a 
       non-parabolic   noncompact K\"ahler manifold with positive  Ricci curvature; 
             \item[$\bullet$]       More generally,   if  we adopt   the product metrics, then
                          both     $\mathbb C^m\times  N$  
                    and  $\mathbb C^m\setminus\{\textbf{0}\}\times N$
                                 are    
                                                  non-parabolic   noncompact K\"ahler manifolds with nonnegative   Ricci curvature  for    $m\geq2,$  
                here        
             $N$ is  any   compact K\"ahler manifold with nonnegative Ricci curvature. 
         \end{enumerate}
 
  \noindent{b) \emph{Parabolic  K\"ahler manifolds  with nonnegative Ricci curvature}}
    \begin{enumerate}
             \item[$\bullet$]  The cylinder $\mathbb C^*\cong \mathbb C/\mathbb Z$ equipped  with the metric $ds^2=|dz|^2/|z|^2,$   is a parabolic   noncompact K\"ahler manifold with zero  Ricci curvature; 
 \item[$\bullet$]   
 Under   the product metric,  the volume criterion gives    that 
   $\mathbb C^*\times T^n$  is a  parabolic   noncompact K\"ahler manifold with zero   Ricci curvature. Similarly, 
     $\mathbb C\times T^n\times \mathbb P^k(\mathbb C)$ is a parabolic   noncompact K\"ahler manifold  with positive   Ricci curvature under the product metric; 
   
   \item[$\bullet$]   More generally,    under the product metrics,  both  $\mathbb C\times  N$  and  $\mathbb C^*\times N$ 
      are    parabolic   noncompact K\"ahler manifolds with nonnegative   Ricci curvature,   here         
             $N$ is any   compact K\"ahler manifold with nonnegative Ricci curvature.  
                        \end{enumerate}

  \section{Preliminaries}

 \subsection{Ricci Curvature}~\label{sec21}
 
  Let $M$ be an  $n$-dimensional  Riemannian manifold, and $R$   the Riemannian curvature tensor of $M.$ 
 The sectional curvature of $M$ at a point $x\in M$ along a section $\Pi$ spanned by two nonzero   $X, Y\in T_{x}M$ is defined as  
 $$K(\Pi)=\frac{R(X, Y, Y, X)}{\|X\wedge Y\|^2},$$
 where 
 $$\|X\wedge Y\|^2=\|X\|^2\|Y\|^2-\langle X,Y\rangle^2.$$
Let $\{e_1,\cdots, e_{n}\}$ be an orthonormal basis of $T_{x}M.$
The Ricci curvature tensor of $M$ is defined by  
 $${\rm{Ric}}(X, Y)=\sum_{j=1}^{n} R(X, e_j, e_j, Y)$$
for  $X, Y\in T_{x}M,$ which is independent of the choice of  orthonormal basis of $T_{x}M.$ 
Then, the  Ricci curvature  of $M$ at $x$ in the direction $X$ is  defined  as 
 $${\rm{Ric}}(X)=\frac{{\rm{Ric}}(X, X)}{\|X\|^2}.$$ 
  We say that $M$ is of   non-negative Ricci curvature at $x,$ if ${\rm{Ric}}(X)\geq0$ for each nonzero vector  $X\in T_{x}M;$ and that  $M$ is of   non-negative Ricci curvature, if 
  it is of  non-negative Ricci curvature at each point  $x\in M.$ 
 
 \subsection{Volume Criterion for Non-parabolicity}~\label{sec22}
 
Let $M$ be a complete Riemannian manifold. 
We  introduce  a  criterion  for the  non-parabolicity  of $M$ using  volume growth. 
A  sharp necessary condition obtained by  N. Varopoulos \cite{Va1} states that if $M$ is non-parabolic, then 
\begin{equation}\label{cri}
\int_1^\infty\frac{tdt}{V(t)}<\infty,
\end{equation}
where $V(t)$ denotes    the Riemannian volume of a geodesic ball   centered at a  fixed  reference point  $o$ with radius $t$ in $M.$
However,   (\ref{cri}) is far from  sufficient,   and a counterexample was constructed  in \cite{Va1}.
The first major result for the sufficiency was due to N. Varopoulos \cite{Va} and Li-Yau \cite{Li-Yau}.
Based on  the   heat kernel estimate, they proved that if $M$ has non-negative Ricci curvature and (\ref{cri}) is satisfied, then $M$ is  non-parabolic.
So, for a Ricci nonnegatively curved manifold   $M,$  $M$ is non-parabolic if and only if (\ref{cri}) is satisfied.  In addition to the examples of such manifolds provided  in  Introduction, further 
 examples  can be found in \cite{Y-S, Y-T, T-Y}.

\subsection{Volume Comparison Theorem}~

A   space form  is  a  complete (simply-connected) Riemannian manifold with constant sectional curvature. 
Denote by $M^K$   the $n$-dimensional   space form   with constant sectional curvature $K.$ 
Let      $V(K, r)$ be  the Riemannian volume of a geodesic ball with radius $r$ in $M^K.$
   Let  $M$ be an    
$n$-dimensional complete Riemannian manifold with Ricci curvature  ${\rm{Ric}}_M,$ and   
     let  
  $V(r)$  stand for  the Riemannian 
  volume of  the geodesic ball centered at a fixed reference point $o$   with radius $r$ in $M.$

The well-known  volume comparison theorem by  Bishop-Gromov (see   \cite{B}) states  that  

\begin{theorem}\label{comp} If  ${\rm{Ric}}_M\geq(n-1)Kg,$  then the volume ratio $V(r)/V(K, r)$ is a non-increasing function in $r>0,$ and which  tends to $1$ as $r\to0.$  Hence, 
we have 
$$V(r)\leq V(K, r)$$
holds for  all $r>0.$
\end{theorem}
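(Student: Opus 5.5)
The plan is the standard Riccati/Jacobi-field comparison argument in geodesic polar coordinates about $o$, together with a monotonicity lemma for ratios of integrals.

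\emph{Setting up polar coordinates.} Write the volume form on the star-shaped domain $M\setminus \mathrm{Cut}(o)$ as $dV=\mathcal A(t,\xi)\,dt\,d\xi$ for $\xi\in S^{n-1}\subset T_oM$. Let $t_\xi$ be the cut time in direction $\xi$, and set $\mathcal A(t,\xi)=0$ for $t\geq t_\xi$. Then
$$V(r)=\int_{S^{n-1}}\int_0^{r}\mathcal A(t,\xi)\,dt\,d\xi,$$
since the cut locus has measure zero. In the model space we have $V(K,r)=|S^{n-1}|\int_0^r s_K(t)^{n-1}dt$, where $s_K$ solves $s''+Ks=0$, $s(0)=0$, $s'(0)=1$. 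For $K>0$ we only work with $r\le \pi/\sqrt K$; Myers' theorem bounds the diameter of $M$ by this value, so beyond it $V(r)$ is constant while $V(K,r)$ stays at its maximum.

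\emph{Key step: pointwise comparison of the density.} Set $J=\mathcal A^{1/(n-1)}$, so that $\partial_t\log\mathcal A=m(t,\xi)$ is the mean curvature of the geodesic sphere. The Riccati (Bochner) inequality along the radial geodesic $\gamma_\xi$ is
$$m'+\frac{m^2}{n-1}\le -\mathrm{Ric}(\dot\gamma,\dot\gamma)\le -(n-1)K.$$
It follows from the trace of the Riccati equation for the shape operator together with the Cauchy--Schwarz inequality $\mathrm{tr}(S^2)\ge (\mathrm{tr}\,S)^2/(n-1)$. In terms of $J$ this says $J''+KJ\le 0$ on $(0,t_\xi)$. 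The initial data are $J(t)\sim t$ as $t\to 0$, matching those of $s_K$.

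A Sturm comparison then gives that $J/s_K$ is non-increasing. Indeed, $(J's_K-Js_K')'=(J''+KJ)s_K\le0$ and this quantity vanishes at $0$, so $(J/s_K)'\le0$. Hence $t\mapsto \mathcal A(t,\xi)/s_K(t)^{n-1}$ is non-increasing on $(0,t_\xi)$. The truncation to $0$ after $t_\xi$ preserves this monotonicity, and the ratio tends to $1$ as $t\to0$.

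I expect this step to be the main obstacle. The delicate points are justifying the Riccati inequality before the cut point and handling the initial asymptotics. For the Riccati inequality I would use the smoothness of $\exp_o$ on the interior of the segment domain. For the asymptotics, $m(t)\sim (n-1)/t$ near $t=0$.

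\emph{Integrating.} Apply the elementary lemma: if $f/g$ is non-increasing with $f\geq 0$ and $g>0$, then $\int_0^r f\,/\int_0^r g$ is non-increasing. The proof is to differentiate and use $f(r)\int_0^r g\le g(r)\int_0^r f$, which holds because $f(t)\ge \frac{f(r)}{g(r)}g(t)$ for $t\le r$. Integrating over $\xi$ first and then applying the lemma with $f(t)=\int_{S^{n-1}}\mathcal A(t,\xi)\,d\xi$ and $g(t)=|S^{n-1}|s_K(t)^{n-1}$ gives that $V(r)/V(K,r)$ is non-increasing.

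As $r\to0$, both volumes are asymptotic to $\omega_n r^n$ by normal coordinates, so the ratio tends to $1$. Monotonicity then gives $V(r)/V(K,r)\le 1$, that is, $V(r)\le V(K,r)$ for all $r>0$.
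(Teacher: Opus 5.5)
The paper gives no proof of this statement. It quotes the classical Bishop--Gromov theorem from the literature and later uses it only with $K=0$.

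Your argument is the standard proof, and it is correct. It consists of:
\begin{itemize}
\item geodesic polar coordinates about $o$, with the density extended by zero past the cut locus;
\item the Riccati inequality $m'+m^2/(n-1)\le -(n-1)K$, obtained from Bochner's formula for the distance function;
\item the Sturm comparison $(J's_K-Js_K')'=(J''+KJ)s_K\le 0$ for $J=\mathcal A^{1/(n-1)}$, with vanishing initial value;
\item averaging over directions, then the ratio-of-integrals lemma.
\end{itemize}
Your handling of $K>0$ is also right: Myers gives $t_\xi\le \pi/\sqrt K$, and both volumes are constant beyond that radius.

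One point should be tightened. In the last step, $f(t)=\int_{S^{n-1}}\mathcal A(t,\xi)\,d\xi$ need not be continuous. On round $\mathbb{RP}^n$, for example, every direction has cut time $\pi/2$ and $\mathcal A(\pi/2,\xi)=1\neq 0$, so $f$ jumps to $0$ there. ``Differentiate'' therefore has to mean a.e.\ differentiation of the absolutely continuous functions $F(r)=\int_0^r f$ and $G(r)=\int_0^r g$.

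Alternatively, avoid derivatives altogether. For $t\le r_1\le s\le r_2$, monotonicity of $f/g$ gives $f(s)g(t)\le f(t)g(s)$. Integrating over $t\in[0,r_1]$ and $s\in[r_1,r_2]$ gives $\bigl(F(r_2)-F(r_1)\bigr)G(r_1)\le F(r_1)\bigl(G(r_2)-G(r_1)\bigr)$. This is exactly $F(r_2)/G(r_2)\le F(r_1)/G(r_1)$.
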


In particular,  when    $M^K=\mathbb R^n,$  we conclude  that    

\begin{cor}\label{volume}  If ${\rm{Ric}}_M\geq0,$ then we have 
$$V(r)\leq \omega_n r^n$$
holds for  all $r>0,$ where $\omega_{n}$ denotes  the volume of the unit ball in $\mathbb R^n.$
\end{cor}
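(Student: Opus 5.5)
This follows at once from the Bishop--Gromov comparison (Theorem \ref{comp}) with $K=0$, where the model space $M^0$ is Euclidean space $\mathbb R^n$. The plan is to check that the hypothesis ${\rm Ric}_M\geq 0$ is exactly ${\rm Ric}_M\geq (n-1)Kg$ with $K=0$. Then we identify the model volume $V(0,r)$ with $\omega_n r^n$, the volume of a Euclidean ball of radius $r$, and read off the inequality $V(r)\leq V(0,r)$.

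In more detail, the steps are these:
\begin{enumerate}
\item Setting $K=0$ makes the right-hand side $(n-1)Kg$ vanish, so the curvature assumption of Theorem \ref{comp} is precisely ${\rm Ric}_M\geq0$.
\item The complete simply-connected space form of constant sectional curvature $0$ is $\mathbb R^n$ with its flat metric. Hence $V(0,r)$ is the Lebesgue volume of a ball of radius $r$. By scaling this equals $\omega_n r^n$, where $\omega_n$ is the volume of the unit ball.
\item Theorem \ref{comp} states that $V(r)/V(0,r)$ is non-increasing in $r$ and tends to $1$ as $r\to0$. Therefore
$$\frac{V(r)}{\omega_n r^n}\leq \lim_{s\to0}\frac{V(s)}{\omega_n s^n}=1,$$
which gives $V(r)\leq \omega_n r^n$ for all $r>0$.
\end{enumerate}

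There is no real obstacle here. The only point needing care is the normalization: the limit $V(r)/V(0,r)\to1$ relies on the small geodesic balls of $M$ being asymptotically Euclidean. This holds by the normal-coordinate expansion of the volume element, and it is already part of the statement of Theorem \ref{comp}.
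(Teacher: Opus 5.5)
Your proposal is correct and matches the paper's argument: the paper obtains the corollary by specializing Bishop--Gromov (Theorem \ref{comp}) to $K=0$, where $M^0=\mathbb R^n$ and $V(0,r)=\omega_n r^n$. Your step 3 re-derives $V(r)\leq V(0,r)$ from the monotonicity and the limit at $0$. Theorem \ref{comp} already states that inequality directly, so the extra step is harmless but not needed.
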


When ${\rm{Ric}}_M\geq0,$   Calabi-Yau (see  \cite{S-Y})  proved that 

\begin{theorem}\label{volume1}  Assume   that $M$ is noncompact.  If  ${\rm{Ric}}_M\geq0,$  then $M$ has an infinite volume. More precisely, for any $\epsilon>0,$ there exists a constant $c=c(o, n, \epsilon)>0$ such that 
$$V(r)\geq c r$$
holds for  all $r\geq \epsilon.$
\end{theorem}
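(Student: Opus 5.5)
We first establish the infinite-volume assertion and then the linear lower bound. The plan uses two tools: the Bishop--Gromov relative volume comparison (the monotonicity in Theorem \ref{comp} with $K=0$, applied to annuli around a point other than $o$), and the existence of a ray. Since $M$ is complete and noncompact, there is a geodesic ray $\gamma:[0,\infty)\to M$ from $o$ with $\rho(\gamma(s))=s$.

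The key step is a uniform lower bound for the volume of a unit ball centered far out on the ray. Fix $r\geq 2$, set $x=\gamma(r)$, and compare $B(x,r-1)$ with $B(x,r+1)$. The set $B(x,r+1)$ contains $B(o,1)$, while $B(x,r-1)$ is disjoint from $B(o,1)$. Relative volume comparison centered at $x$ gives
$$\frac{\mathrm{Vol}\,B(x,r+1)}{(r+1)^n}\leq \frac{\mathrm{Vol}\,B(x,r-1)}{(r-1)^n}.$$
Hence
$$\mathrm{Vol}\big(B(x,r+1)\setminus B(x,r-1)\big)\leq \Big(\big(\tfrac{r+1}{r-1}\big)^n-1\Big)\mathrm{Vol}\,B(x,r-1)\leq \frac{C(n)}{r}\,\mathrm{Vol}\,B(x,r-1).$$
The annulus contains $B(o,1)$, so
$$\mathrm{Vol}\,B(x,r-1)\geq c_0\, r\,V(1)\geq c_0\, r\, V(\min(1,\epsilon)).$$
In addition, $B(x,r-1)\subset B(o,2r)$, which yields $V(2r)\geq c\,r$ for all $r\geq2$. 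Rescaling $r$ gives $V(r)\geq c\,r$ for $r\geq 4$.

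To pass from $r\geq 4$ to the full range $r\geq\epsilon$, use $V(r)\geq V(\epsilon)>0$ on $[\epsilon,4]$ and shrink $c$ accordingly; this makes $c$ depend only on $o$, $n$ and $\epsilon$. The constant $c$ depends on $o$ through $V(1)$, and this dependence is allowed by the statement. Infinite volume then follows immediately by letting $r\to\infty$.

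The main obstacle is justifying the relative (annular) Bishop--Gromov monotonicity centered at the point $x\neq o$. Theorem \ref{comp} is stated for a fixed center, but it applies to any center, since the reference point is arbitrary and the curvature hypothesis is global. With that observation, the estimate of $((r+1)/(r-1))^n-1\sim 2n/r$ is routine.
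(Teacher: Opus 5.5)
Your proof is correct. The paper does not prove this statement. It simply quotes the Calabi--Yau theorem from Schoen--Yau, so there is no paper argument to compare against.

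Your argument is the standard textbook proof of that result:
\begin{itemize}
\item When $\rho(x)=r$, one has $B(o,1)\subseteq B(x,r+1)\setminus B(x,r-1)$.
\item The monotonicity of Theorem \ref{comp} with $K=0$, applied at the center $x$, then gives $\mathrm{Vol}\,B(x,r-1)\geq rV(1)/C(n)$.
\item The inclusion $B(x,r-1)\subseteq B(o,2r)$ yields the linear bound for large $r$.
\item The trivial bound $V(r)\geq V(\epsilon)$ handles the range $[\epsilon,4]$.
\end{itemize}

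Two minor remarks. First, the ray is unnecessary: you only use a point $x$ with $\rho(x)=r$. Such a point exists for every $r>0$, because $M$ is connected and, being complete and noncompact, unbounded.

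Second, what you call the ``main obstacle'' is not one. You never use an annular comparison. You only use the monotonicity of $\mathrm{Vol}\,B(x,s)/s^n$ for balls centered at $x$. That is exactly Theorem \ref{comp} with the reference point replaced by $x$, which is legitimate because the Ricci lower bound holds everywhere.
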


\subsection{Heat Kernel}~\label{sec22}

Let $M$ be a complete Riemannian manifold of dimension $n,$ with  Laplace-Beltrami operator $\Delta.$ 
We denote by  $\rho(x, y)$  the Riemannian distance between  $x, y\in M,$ and by $V_x(r)$ the Riemannian volume of the geodesic ball centered at $x$ with radius $r$ in $M.$ 
The  heat kernel $p(t, x, y)$ of  $M$  is  defined to be  the minimal positive fundamental solution of the  following heat equation 
$$\Big(\Delta-\frac{\partial}{\partial t}\Big)u(t, x)=0.$$
\ \ \ \  Li-Yau \cite{Li-Yau} (see \cite{S-Y} also) gave two-sided estimate    of  $p(t,x,y).$ 
\begin{theorem}\label{ppttt}  Assume that $M$ has non-negative Ricci curvature.  Then for any $0<\epsilon<1,$ there exist two constants $c_1=c_1(\epsilon, n),  c_2=c_2(\epsilon, n)>0$
such that 
$$\frac{c_1}{V_x(\sqrt t)}e^{-\frac{\rho(x, y)^2}{4(1-\epsilon)t}}\leq p(t, x, y)\leq \frac{c_2}{V_x(\sqrt t)}e^{-\frac{\rho(x, y)^2}{4(1+\epsilon)t}}$$
holds for all $x,y\in M$ and all $t>0.$
\end{theorem}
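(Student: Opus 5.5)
The plan is to follow Li--Yau's original strategy: derive a parabolic Harnack inequality from a pointwise gradient estimate, get on-diagonal bounds from it, and then extend these off the diagonal with sharp Gaussian exponents. Throughout, the volume doubling property $V_x(2r)\leq 2^nV_x(r)$ and the comparison $V_x(r)\leq V_x(s)(r/s)^n$ for $r\geq s$ are available from Theorem \ref{comp} with $K=0$.

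\emph{Step 1 (Li--Yau gradient estimate and Harnack).} For a positive solution $u$ of $(\Delta-\partial_t)u=0$ on $M\times(0,\infty)$ with ${\rm Ric}\geq0$, I would prove
$$\frac{|\nabla u|^2}{u^2}-\alpha\frac{u_t}{u}\leq\frac{n\alpha^2}{2t}$$
for any $\alpha>1$. The argument applies the maximum principle to $F=t(|\nabla\log u|^2-\alpha\,\partial_t\log u)$ multiplied by a cutoff, and uses Bochner's formula with ${\rm Ric}\geq0$. Integrating this estimate along a minimal geodesic gives the Harnack inequality
$$u(x,t_1)\leq u(y,t_2)\Big(\frac{t_2}{t_1}\Big)^{n\alpha/2}\exp\Big(\frac{\alpha\rho(x,y)^2}{4(t_2-t_1)}\Big),\qquad 0<t_1<t_2.$$

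\emph{Step 2 (on-diagonal bounds).} For the upper bound, apply Harnack to $u=p(\cdot,x,\cdot)$. This gives $p(t,x,x)\leq C\,p(2t,x,y)$ for $y\in B_x(\sqrt t)$. Integrating over the ball and using $\int_M p(2t,x,y)\,dy\leq1$ yields $p(t,x,x)\leq C/V_x(\sqrt t)$. Combining this with the semigroup property $p(t,x,y)\leq p(t,x,x)^{1/2}p(t,y,y)^{1/2}$ and doubling, I would get a crude off-diagonal bound of order $1/V_x(\sqrt t)$.

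For the lower bound, first show that the mass of $p(s,x,\cdot)$ outside $B_x(R\sqrt s)$ is at most $1/2$ for a large fixed $R$. This uses the Gaussian upper bound from Step 3 together with doubling; since stochastic completeness holds under ${\rm Ric}\geq0$, the mass inside $B_x(R\sqrt s)$ is then at least $1/2$. Cauchy--Schwarz then gives
$$p(2s,x,x)=\int_M p(s,x,y)^2\,dy\geq\frac{1}{4V_x(R\sqrt s)}\geq\frac{c}{V_x(\sqrt s)}.$$

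\emph{Step 3 (sharp Gaussian upper bound; the main obstacle).} The difficulty is getting exactly the exponent $\rho^2/(4(1+\epsilon)t)$ with a constant depending only on $\epsilon$ and $n$. I would use Davies' integrated maximum principle. For $D>0$, the weighted energy
$$E(t)=\int_M p(t,x,y)^2e^{\rho(y,z)^2/(Dt)}\,dy$$
is nonincreasing in $t$. This gives an $L^2$-Gaussian bound at the scale $\sqrt t$, and Step 2 controls the relevant ball volumes. Next, the mean-value/Harnack inequality from Step 1, applied with $\alpha$ close to $1$ on a small parabolic cylinder, converts the $L^2$ bound into a pointwise one. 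The losses from the Harnack constant, from doubling (which turns the factor $V_y$ into $V_x$, costing polynomial factors in $1+\rho/\sqrt t$), and from choosing $D$ slightly above $2$ are all absorbed by replacing $4$ with $4(1+\epsilon)$ in the exponent.

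\emph{Step 4 (Gaussian lower bound).} Apply the Harnack inequality of Step 1 between the points $(x,\delta t)$ and $(y,t)$, with $\alpha=1+\eta$:
$$p(t,x,y)\geq p(\delta t,x,x)\,\delta^{n\alpha/2}\exp\Big(-\frac{(1+\eta)\rho(x,y)^2}{4(1-\delta)t}\Big).$$
Choose $\eta$ and $\delta$ small so that $(1+\eta)/(1-\delta)\leq1/(1-\epsilon)$. The diagonal lower bound from Step 2 together with doubling gives $p(\delta t,x,x)\geq c/V_x(\sqrt t)$, which completes the lower estimate.
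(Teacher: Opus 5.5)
Your outline is correct and is essentially the Li--Yau argument, which the paper does not prove but simply cites from \cite{Li-Yau}. The steps match: gradient estimate and Harnack inequality, on-diagonal bounds via Harnack, stochastic completeness and Cauchy--Schwarz, an integrated maximum principle plus Harnack for the Gaussian upper bound, and Harnack from $(x,\delta t)$ to $(y,t)$ for the lower bound. The one place to tighten is Step 3: monotonicity of the weighted energy needs $D\geq 2$ (not any $D>0$), and monotonicity by itself does not bound it. So apply the principle to a solution with $L^2$ initial data, e.g.\ $u(\cdot,s)=\int_{B_x(\sqrt t)}p(s,w,\cdot)\,dw$ with weight $-\rho(\cdot,B_y(\sqrt t))^2/(2(T-s))$, and then use Harnack in both variables.
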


Let $\nabla$ denote the gradient operator on $M.$
\begin{theorem}\label{gradient} Assume that $M$ has nonnegative Ricci curvature.  Then for any $0<\epsilon<1,$ there exists a constant $c_2=c_2(\epsilon, n)>0$
such that 
$$\|\nabla_y p(t, x, y)\|\leq \frac{c_2}{\sqrt{t}V_x(\sqrt t)}e^{-\frac{\rho(x, y)^2}{4(1+\epsilon)t}}$$
holds for all $x\not=y\in M$ and all $t>0.$
\end{theorem}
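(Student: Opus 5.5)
The plan is to derive the estimate from a global gradient estimate for bounded positive solutions of the heat equation, namely Hamilton's estimate. We then feed in the Gaussian upper bound of Theorem \ref{ppttt} and absorb the polynomial loss into a slightly weaker exponent.

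\emph{Step 1 (Hamilton's estimate).} Fix $x\in M$ and $t>0$, and put $u(s,y)=p(s+t/2,x,y)$ for $s\in[0,t/2]$. This is a positive solution of $(\Delta-\partial_s)u=0$ on $M\times[0,t/2]$. It is bounded: by Theorem \ref{ppttt} and volume doubling (Bishop--Gromov, Theorem \ref{comp}, gives $V_x(\sqrt t)\le 2^{n/2}V_x(\sqrt{t/2})$),
$$u\le \frac{c_2}{V_x(\sqrt{t/2})}\le \frac{C_0}{V_x(\sqrt t)}.$$
Set $A=e\,C_0/V_x(\sqrt t)$. Since ${\rm Ric}_M\ge0$, Hamilton's gradient estimate for bounded positive solutions on a complete manifold applies on the time interval of length $t/2$ and gives
$$\|\nabla_y u\|^2\le \frac{2}{s}\,u^2\log\frac{A}{u}.$$
Evaluating at $s=t/2$, that is at the original time $t$, yields
$$\|\nabla_y p(t,x,y)\|\le \frac{2}{\sqrt t}\,\varphi\big(p(t,x,y)\big),\qquad \varphi(v)=v\sqrt{\log(A/v)}.$$
The extra factor $e$ in $A$ guarantees $u\le A e^{-1}$ everywhere. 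If one prefers not to quote Hamilton's estimate, I would instead use the Souplet--Zhang local version on a parabolic cylinder $B_y(\sqrt t)\times[t/2,t]$. The output has the same shape, and the only change is that $\sup u$ is taken over that cylinder.

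\emph{Step 2 (Monotonicity substitution).} The function $\varphi$ is increasing on $(0,Ae^{-1/2}]$. So we may replace $p$ by any upper bound lying in that range. Apply Theorem \ref{ppttt} with a smaller parameter $\epsilon'<\epsilon$, chosen for instance so that $1+\epsilon'<1+\epsilon$ with room to spare. This gives
$$p\le \bar p:=\frac{c_2(\epsilon')}{V_x(\sqrt t)}\,e^{-\rho^2/(4(1+\epsilon')t)},$$
truncated by $\min\{\cdot,Ae^{-1/2}\}$, which only helps. Then $\log(A/\bar p)\le C+\rho^2/(4(1+\epsilon')t)$, and hence
$$\|\nabla_y p\|\le \frac{C}{\sqrt t\,V_x(\sqrt t)}\Big(1+\frac{\rho}{\sqrt t}\Big)e^{-\rho^2/(4(1+\epsilon')t)}.$$

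\emph{Step 3 (Absorption).} Use the elementary bound
$$\sup_{a\ge0}(1+a)\,e^{-a^2\left(\frac{1}{4(1+\epsilon')}-\frac{1}{4(1+\epsilon)}\right)}<\infty$$
to trade the factor $1+\rho/\sqrt t$ for the weaker Gaussian $e^{-\rho^2/(4(1+\epsilon)t)}$. This gives the claim with a constant depending only on $\epsilon$ and $n$.

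The main obstacle is Step 1. One needs a gradient estimate that is valid globally with constants depending only on $n$, and that requires justifying Hamilton's maximum-principle argument on a noncompact manifold. I would handle this either by citing the standard extension, which works because $u$ is bounded and ${\rm Ric}\ge0$ so that the Karp--Li/Ecker--Huisken maximum principle applies, or by the local Souplet--Zhang route, where only cutoff functions on balls are needed. Care is also needed at Step 2 with the truncation near $\rho=0$, where $\bar p$ may exceed $Ae^{-1/2}$; in that region we simply bound $\varphi\le Ae^{-1/2}\sqrt{1/2}$ directly.
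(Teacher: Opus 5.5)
Your argument is correct. The paper itself gives no proof of this statement: it appears in the preliminaries as a known companion to the Li--Yau two-sided bound (Theorem \ref{ppttt}) and is simply quoted. So the comparison is with the literature rather than with an internal argument.

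Your derivation is essentially the standard one, namely Kotschwar's (Proc.\ Amer.\ Math.\ Soc., 2007). It has three ingredients:
\begin{itemize}
\item Hamilton's estimate $s\|\nabla u\|^2\le u^2\log(A/u)$ for bounded positive solutions, carried over to complete manifolds with ${\rm Ric}\ge 0$ through the Karp--Li maximum principle;
\item the Li--Yau Gaussian upper bound;
\item the monotonicity of $v\mapsto v\sqrt{\log(A/v)}$ on $(0,Ae^{-1/2}]$.
\end{itemize}
Compared with a bare citation, your route isolates the single external input (the global Hamilton estimate). It also shows exactly where the loss from $4t$ to $4(1+\epsilon)t$ in the exponent comes from: it is the factor $1+\rho/\sqrt t$, which is paid for with a sliver of the Gaussian.

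Three small points.

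\emph{The local fallback.} The Souplet--Zhang estimate has the factor $1+\log(A/u)$, not $\sqrt{\log(A/u)}$. That route therefore produces $1+\rho^2/t$ instead of $1+\rho/\sqrt t$. This is still absorbed by your Step 3, but it is not \emph{the same shape}.

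\emph{The truncation region.} Where $\bar p>Ae^{-1/2}$, the bound $\varphi\le Ae^{-1/2}/\sqrt2$ alone gives no Gaussian factor. You should add that $\rho^2/t$ is bounded there by a constant depending only on $\epsilon$ and $n$, so $e^{-\rho^2/(4(1+\epsilon)t)}$ is bounded below. Alternatively, choose $C_0\ge c_2(\epsilon')$ from the start; then $\bar p\le Ae^{-1}$ everywhere and the region is empty.

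\emph{The constant.} Hamilton's estimate carries $1/s$, not $2/s$. Your version is weaker, so this is harmless.
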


 Define  
$$G(x, y)=2\int_0^\infty p(t, x, y)dt.$$
 This infinite  integral is  convergent if and only if $M$ is non-parabolic. When $M$ is non-parabolic, 
 $G(x, y)$ defines   the unique   minimal positive global Green function of $\Delta_y/2$ for $M,$ i.e., 
$$\begin{cases}
 -\frac{1}{2}\Delta_y G(x, y)=\delta_x(y),  \ \ & x, y\in M; \\
  G(x,y)>0,    &  x, y\in M;  \\
  \displaystyle{\lim_{\rho(x, y)\to\infty}}G(x,y)=0.
\end{cases}$$
where $\delta_x$ is the Dirac's delta  function with a pole at $x.$ 

When $M$ is non-parabolic with non-negative Ricci curvature,  Li-Yau \cite{Li-Yau}  (see \cite{S-Y} also) gave  two-sided estimate of $G(x,y)$ as follows.  
 \begin{theorem}\label{kkk}  Assume that $M$ is non-parabolic. If $M$ has   nonnegative Ricci curvature,   then   there exist two constants $A, B>0$ depending only on $n$ 
such that 
 $$A\int_{\rho(x, y)}^\infty\frac{tdt}{V_x(t)}\leq G(x,y)\leq B\int_{\rho(x, y)}^\infty\frac{tdt}{V_x(t)}$$
holds for all $x, y\in M.$
\end{theorem}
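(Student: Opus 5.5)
We integrate the two-sided heat kernel bounds of Theorem \ref{ppttt} in time, since $G(x,y)=2\int_0^\infty p(t,x,y)\,dt$. Fix $\epsilon=1/2$ once and for all, so that $c_1,c_2$ depend only on $n$; this is what will make $A,B$ depend only on $n$. Write $\rho=\rho(x,y)$. The substitution $t=s^2$, $dt=2s\,ds$, converts $\int \frac{dt}{V_x(\sqrt t)}$ over $t\ge\rho^2$ into $2\int_\rho^\infty \frac{s\,ds}{V_x(s)}$. This is exactly the quantity appearing in the statement. The proof therefore splits the time integral at $t=\rho^2$.

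\emph{Lower bound.} Discard the range $t<\rho^2$, which is allowed since $p>0$. For $t\ge\rho^2$ we have $e^{-\rho^2/(4(1-\epsilon)t)}\ge e^{-1/(4(1-\epsilon))}=e^{-1/2}$. The lower estimate of Theorem \ref{ppttt} then gives
$$G(x,y)\ge 2c_1e^{-1/2}\int_{\rho^2}^\infty\frac{dt}{V_x(\sqrt t)}=4c_1e^{-1/2}\int_\rho^\infty\frac{s\,ds}{V_x(s)},$$
so one may take $A=4c_1e^{-1/2}$.

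\emph{Upper bound.} On $t\ge\rho^2$, bound the exponential by $1$. This gives the contribution $4c_2\int_\rho^\infty\frac{s\,ds}{V_x(s)}$.

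On $0<t<\rho^2$ we use volume doubling. The Bishop--Gromov comparison (Theorem \ref{comp} with $K=0$, applied to balls centered at $x$) says that $V_x(r)/r^n$ is non-increasing. Hence $V_x(\rho)\le(\rho/\sqrt t)^nV_x(\sqrt t)$ for $\sqrt t\le\rho$, and therefore
$$\int_0^{\rho^2}\frac{c_2}{V_x(\sqrt t)}e^{-\frac{\rho^2}{6t}}dt\le\frac{c_2}{V_x(\rho)}\int_0^{\rho^2}\Big(\frac{\rho^2}{t}\Big)^{n/2}e^{-\frac{\rho^2}{6t}}dt .$$
Substituting $u=\rho^2/t$ turns the last integral into $\rho^2\int_1^\infty u^{n/2-2}e^{-u/6}du=C_n\rho^2$. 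The short-time contribution is thus at most $C_n\rho^2/V_x(\rho)$.

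To absorb this term, note that for $t\in[\rho,2\rho]$ doubling gives $V_x(t)\le V_x(2\rho)\le 2^nV_x(\rho)$. Consequently
$$\int_\rho^\infty\frac{t\,dt}{V_x(t)}\ge\int_\rho^{2\rho}\frac{t\,dt}{V_x(t)}\ge\frac{3\rho^2}{2^{n+1}V_x(\rho)} .$$
Combining the two ranges yields $B=4c_2+C_n'$, which depends only on $n$.

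\emph{Remarks.} No step here is a serious obstacle. The point needing most care is the short-time piece, where the Gaussian factor must dominate the polynomial blow-up of $1/V_x(\sqrt t)$ as $t\to0$; the comparison theorem supplies exactly this. Non-parabolicity is used only to ensure that both sides are finite. By the lower bound, finiteness of $G$ is in fact equivalent to $\int^\infty t\,dt/V_x(t)<\infty$, consistent with the volume criterion of Section \ref{sec22}.
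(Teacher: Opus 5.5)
Your proof is correct. The paper gives no proof of its own here and simply cites Li--Yau, whose argument is the one you give: integrate the two-sided heat kernel bounds in time, split at $t=\rho^2$, and use Bishop--Gromov doubling to dominate the short-time piece $C_n\rho^2/V_x(\rho)$ by $\int_\rho^{2\rho}t\,dt/V_x(t)$. (One small wording point: the equivalence in your final remark needs both bounds, not only the lower one.)
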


  \section{Carlson-Griffiths Theory for Non-parabolic Complete K\"ahler  Manifolds with Nonnegative Ricci Curvature}

Let  $(M, g)$ be   a   non-parabolic complete noncompact  K\"ahler manifold  with nonnegative Ricci curvature,
  of complex dimension $m.$ Let $\nabla$ be   the gradient operator on  $M.$
  The  K\"ahler form $\alpha$ of $M$ associated to    $g$ is defined as 
 $$\alpha=\frac{\sqrt{-1}}{\pi}\sum_{i,j=1}^mg_{i\bar j}dz_i\wedge d\bar z_{j}$$
in a local holomorphic coordinate system $z_1,\cdots,z_m.$
  
    \subsection{Construction of $\Delta(r)$}~\label{sec31}

 Fix a  reference point $o\in M.$ 
  Let   $V(r)$ be  the Riemannian volume of  $B(r),$ where  $B(r)$  denotes  the geodesic ball  centered at $o$ with radius  $r$ in $M.$   Note that  the non-parabolicity of $M$ implies that
 $$\int_1^\infty\frac{tdt}{V(t)}<\infty.$$
Then, we have the unique minimal positive global Green function $G(o,x)$ of the half Laplace-Beltrami operator $\Delta/2$  for $M$ with a pole at $o.$ In terms of heat kernel,   
we can write 
$$G(o, x)=2\int_0^\infty p(t, o,x)dt,$$
where $p(t,o,x)$ is the heat kernel of $M.$
  Let $\rho(x)$ be the Riemannian distance function of $x$ from $o.$ Using Theorem \ref{kkk}, there exist two constants $A, B>0$ such that  
 \begin{equation}\label{Gr}
 A\int_{\rho(x)}^\infty\frac{tdt}{V(t)}\leq G(o,x)\leq B\int_{\rho(x)}^\infty\frac{tdt}{V(t)}
 \end{equation}
holds for all $x\in M.$  For $r>0,$ define   
 $$\Delta(r)=\left\{x\in M: \    G(o,x)>A\int_r^\infty\frac{tdt}{V(t)}\right\}.$$
  By 
$$\lim_{\rho(x)\to 0}G(o,x)=\infty, \ \ \ \     \lim_{\rho(x)\to\infty}G(o,x)=0,$$
  one can conclude immediately    that  $\Delta(r)$ is a precompact  domain containing $o$  such  that 
 $$ \ \ \  \lim_{r\to0}\Delta(r)\to \emptyset, \ \ \ \   \lim_{r\to\infty}\Delta(r)=M.$$
Note that  the family 
     $\{\Delta(r)\}_{r>0}$ exhausts $M,$ i.e.,  for any   sequence $\{r_n\}_{n=1}^\infty$ with    $0<r_1<r_2<\cdots\to \infty,$ 
     we have       
 $$\bigcup_{n=1}^\infty\Delta(r_n)=M, \ \ \ \     \emptyset\not=\Delta(r_1)\subseteq\overline{\Delta(r_1)}\subseteq\Delta(r_2)\subseteq\overline{\Delta(r_2)}\subseteq\cdots$$    
The boundary $\partial\Delta(r)$ of $\Delta(r)$ can be formulated as
 $$\partial\Delta(r)=\left\{x\in M: \    G(o,x)=A\int_r^\infty\frac{tdt}{V(t)}\right\}.$$
   By  Sard's theorem,   $\partial\Delta(r)$  is a submanifold of $M$ for almost all $r>0.$  
    
     Set
 $$g_r(o,x)=G(o,x)-A\int_r^\infty\frac{tdt}{V(t)}.$$
Evidently,    $g_r(o,x)$ is  the positive Green function of $\Delta/2$ for $\Delta(r)$ with a pole at $o$ satisfying Dirichlet boundary condition, i.e., 
      $$\begin{cases}
 -\frac{1}{2}\Delta g_r(o,x)=\delta_o(x),  \ \ &    x\in\Delta(r); \\
   g_r(o,x)>0, \ \  &  x\in\Delta(r); \\
  g_r(o,x)=0, \ \  &  x\in\partial\Delta(r),
\end{cases}$$
 where $\delta_o$ is the Dirac's delta  function with a pole at $o.$ 
Let  $\pi_r$ stand for  the harmonic measure  on $\partial\Delta(r)$ with respect to $o,$ defined by
  $$d\pi_r=\frac{1}{2}\frac{\partial g_r(o,x)}{\partial{\vec{\nu}}}d\sigma_r,$$
  where  $\partial/\partial \vec\nu$ is the inward  normal derivative on $\partial \Delta(r),$ and $d\sigma_{r}$ is the induced Riemannian area element of 
$\partial \Delta(r).$

     \subsection{Nevanlinna's Functions and First Main Theorem}~\label{sec32}

   Below,  we   introduce   Nevanlinna's functions in the non-parabolic setting. 
  Let $(N, \omega)$ be a compact K\"ahler manifold, and $D$  an effective devisor on $N.$ We recall  that $D$ is cohomologous to $\omega,$ if  there exists a function 
$u_D\geq0$ on $N$ such that 
\begin{equation}\label{888}
\omega-[D]=2dd^c\left[u_D\right]
\end{equation}
in the sense of currents, where 
$$d=\partial+\overline{\partial}, \ \ \ \     d^c=\frac{\sqrt{-1}}{4\pi}\big(\overline{\partial}-\partial\big)$$
 so that      $$dd^c=\frac{\sqrt{-1}}{2\pi}\partial\overline{\partial}.$$
\ \ \ \     Let $f: M\to N$ be a meromoprhic  mapping. Define  
$$e_{f,\omega}=2m\frac{f^*\omega\wedge\alpha^{m-1}}{\alpha^m},$$
 which is called  the \emph{energy density function} of $f$ with respect to metrics $\alpha, \omega.$
 The \emph{characteristic function} of  $f$ with respect to $\omega$ is defined as
$$T_f(r, \omega)= \frac{1}{2}\int_{\Delta(r)}g_r(o,x) e_{f, \omega}dv,$$
where $dv$  stands for      the Riemannian volume element of $M.$ 
Assume that  $D$ is   cohomologous to $\omega.$ 
We define the  \emph{proximity function} of $f$ with respect to $D$ as  
$$m_f(r,D)=\int_{\partial\Delta(r)} u_D\circ fd\pi_r.$$
Finally, the 
\emph{counting function} and \emph{simple counting function} of $f$ with respect to $D$ are  respectively defined   as  
   \begin{eqnarray*}
N_f(r,D)&=& \frac{\pi^m}{(m-1)!}\int_{f^*D\cap\Delta(r)}g_r(o,x)\alpha^{m-1}, \\
\overline{N}_f(r, D)&=& \frac{\pi^m}{(m-1)!}\int_{f^{-1}(D)\cap\Delta(r)}g_r(o,x)\alpha^{m-1}.
 \end{eqnarray*}
 \ \ \ \     To establish the First Main Theorem,  we need  the  Green-Dynkin formula (see  \cite{at, Dong1, nonpara, DY}) as follows. 
\begin{lemma}[Green-Dynkin Formula]\label{dynkin} Let $\phi$ be a $\mathscr C^2$ function on  $M$ outside  a polar set of singularities  at most. Assume that $\phi(o)\not=\infty.$  Then
$$\int_{\partial \Delta(r)}\phi d\pi_{r}-\phi(o)=\frac{1}{2}\int_{\Delta(r)}g_r(o,x)\Delta \phi dv.$$
\end{lemma}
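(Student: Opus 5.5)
The plan is to reduce the formula to Green's second identity on the truncated domain $\Delta(r)\setminus \overline{B(o,\varepsilon)}$ and then let $\varepsilon\to0$. First assume that $\phi$ is $\mathscr C^2$ on all of $M$ and that $\partial\Delta(r)$ is smooth; by Sard's theorem this holds for almost every $r$, and the general $r$ is recovered at the end by continuity in $r$. On $\Delta_\varepsilon=\Delta(r)\setminus\overline{B(o,\varepsilon)}$ the function $g_r(o,\cdot)$ is smooth and harmonic, since $-\frac12\Delta g_r=\delta_o$. Green's second identity then gives
$$\int_{\Delta_\varepsilon}\big(g_r\Delta\phi-\phi\Delta g_r\big)dv=\int_{\partial\Delta_\varepsilon}\Big(g_r\frac{\partial\phi}{\partial \vec n}-\phi\frac{\partial g_r}{\partial\vec n}\Big)d\sigma ,$$
where $\vec n$ is the outward normal, and the term $\phi\Delta g_r$ vanishes.

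Next evaluate the boundary terms. On $\partial\Delta(r)$ we have $g_r=0$, and the outward normal derivative of $g_r$ equals minus the inward one. The contribution there is therefore $\int_{\partial\Delta(r)}\phi\,\frac{\partial g_r}{\partial\vec\nu}d\sigma_r=2\int_{\partial\Delta(r)}\phi\, d\pi_r$, by the definition of $\pi_r$. On the small sphere $\partial B(o,\varepsilon)$ we use the local asymptotics of the Green function: $g_r(o,x)$ behaves like the Euclidean Green function of $\Delta/2$ in real dimension $2m$. This is $c\rho^{2-2m}$ for $m\ge2$, with $\log$ behaviour if $m=1$, normalized so that the total flux of $-\frac12\nabla g_r$ through small spheres is $1$. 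Consequently $\int_{\partial B_\varepsilon}g_r\,\partial_n\phi\to0$, since $g_r=O(\varepsilon^{2-2m})$ (or $O(\log\varepsilon)$) while the area is $O(\varepsilon^{2m-1})$. The other term gives $-\int_{\partial B_\varepsilon}\phi\,\partial_n g_r\,d\sigma\to-2\phi(o)$ by continuity of $\phi$ at $o$. Dividing by $2$ yields the formula, and the left side converges because $g_r\in L^1_{\rm loc}$.

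The main obstacle is allowing $\phi$ to have singularities on a polar set, for instance $\phi=\log\|s\circ f\|$-type functions, where $\Delta\phi$ must be read as a current including a $\delta$-part along divisors. I would handle this by approximation. Write $\phi$ locally as a difference of plurisubharmonic functions or as a limit $\phi_k=\log(\|s\|^2+1/k)$-type regularizations, apply the smooth case to each $\phi_k$, and pass to the limit. The boundary integral converges by dominated or monotone convergence, using that the polar set has $\pi_r$-measure zero and $\phi(o)\ne\infty$. The interior integral converges in the sense of currents against $g_r$, which is legitimate because $g_r$ is continuous away from $o$ and vanishes on $\partial\Delta(r)$. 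Finally, for the exceptional radii where $\partial\Delta(r)$ is not smooth, I would approximate $r$ by regular values $r_k\to r$ and use the continuity in $r$ of both sides.
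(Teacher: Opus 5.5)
Your proposal is correct and takes the same route as the paper. Both apply Green's second identity to $g_r(o,\cdot)$ and $\phi$ on $\Delta(r)$, using $g_r=0$ on $\partial\Delta(r)$, a pole contribution of $-2\phi(o)$, and the inward flux term $2\int_{\partial\Delta(r)}\phi\,d\pi_r$. The paper invokes the identity ``in the sense of distributions'' and writes $\int_{\Delta(r)}\phi\Delta g_r\,dv=-2\phi(o)$ directly, whereas your excision of $B(o,\varepsilon)$, your regularization of the polar singularities, and your Sard-plus-continuity argument in $r$ supply the justification it leaves implicit.
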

\begin{proof} 
Refer to  \cite{DY} for a probabilistic proof. Here, we provide  an  alternative  analytic proof. 
Since the possible  set of  singularities of $\phi$ is polar,  the    Green's second identity is applicable   to $\phi$ in the sense of distributions.  Thus, we have 
  \begin{eqnarray*}
&&\int_{\Delta(r)}g_r(o,x)\Delta \phi dv-\int_{\Delta(r)}\phi\Delta g_r(o,x) dv \\
&=& \int_{\partial\Delta(r)}\phi \frac{\partial g_r(o,x)}{\partial{\vec{\nu}}}d\sigma_r-\int_{\partial\Delta(r)}g_r(o,x) \frac{\partial\phi}{\partial{\vec{\nu}}}d\sigma_r,
  \end{eqnarray*}
  where  $\partial/\partial \vec\nu$ is the inward  normal derivative on $\partial \Delta(r),$ and $d\sigma_{r}$ is the induced Riemannian area element of 
$\partial \Delta(r).$  From the properties   of Green functions, we get 
$$\int_{\Delta(r)}\phi\Delta g_r(o,x) dv=-2\phi(o), \ \ \ \      \int_{\partial\Delta(r)}g_r(o,x) \frac{\partial\phi}{\partial{\vec{\nu}}}d\sigma_r=0.$$
Using the definition of harmonic measures, we also have 
$$ \int_{\partial\Delta(r)}\phi \frac{\partial g_r(o,x)}{\partial{\vec{\nu}}}d\sigma_r=2\int_{\partial\Delta(r)}\phi d\pi_r.$$
Combining  the above, we have the lemma proved.
\end{proof}

\begin{theorem}[First Main Theorem]\label{first}  Assume   that $f(o)\not\in{\rm{Supp}}D.$ Then
$$T_f(r, \omega)+u_D\circ f(o)=m_f(r,D)+N_f(r,D).$$
\end{theorem}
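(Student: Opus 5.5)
The plan is to apply the Green-Dynkin formula (Lemma \ref{dynkin}) to the function $\phi=u_D\circ f$ and then identify each resulting term with a Nevanlinna function. Since $f(o)\notin{\rm Supp}D$, we have $\phi(o)=u_D\circ f(o)<\infty$. The singularities of $\phi$ lie in $f^{-1}(D)$ together with the indeterminacy locus of $f$. This set is an analytic subset of positive codimension, hence polar, so Lemma \ref{dynkin} applies and gives
$$m_f(r,D)-u_D\circ f(o)=\frac12\int_{\Delta(r)}g_r(o,x)\,\Delta(u_D\circ f)\,dv,$$
with $\Delta(u_D\circ f)$ understood in the sense of distributions. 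The left side is already expressed through the proximity function by definition.

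Next we compute $\Delta(u_D\circ f)$ as a current. Pulling back (\ref{888}) by $f$, which is legitimate off the codimension $\geq2$ indeterminacy set, gives
$$2dd^c[u_D\circ f]=f^*\omega-[f^*D].$$
On a K\"ahler manifold with Kähler form $\alpha$ normalized as above, a function $\psi$ satisfies
$$dd^c\psi\wedge\alpha^{m-1}=\frac{(m-1)!}{4\pi^m}\cdot\frac{1}{?}\,\Delta\psi\,\frac{\alpha^m}{m!}\cdots,$$
with the exact constant to be fixed by checking on $\mathbb C^m$. The point of this step is that $\Delta\psi\,dv=c_m\,dd^c\psi\wedge\alpha^{m-1}$ for an explicit constant $c_m$. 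The constants in $e_{f,\omega}$ and in the definition of $N_f$ are evidently chosen so that
$$\frac12 g_r\,\Delta(u_D\circ f)\,dv=\frac12 g_r\,e_{f,\omega}\,dv-\frac{\pi^m}{(m-1)!}\,g_r\,[f^*D]\wedge\alpha^{m-1}.$$
Verifying this is a bookkeeping step: wedge the pulled-back identity with $g_r\alpha^{m-1}$, use $e_{f,\omega}dv=2m\,f^*\omega\wedge\alpha^{m-1}\cdot dv/\alpha^m$, and use $\alpha^m=m!\,dv/\pi^m$ (up to the same normalization) to match the factors.

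Integrating over $\Delta(r)$ then yields $\frac12\int g_r\Delta(u_D\circ f)dv=T_f(r,\omega)-N_f(r,D)$. Substituting this into the Dynkin identity gives the First Main Theorem.

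The main obstacle is justifying Green's identity across the logarithmic singularities of $u_D\circ f$ along $f^{-1}(D)$, where $u_D\to+\infty$. Lemma \ref{dynkin} is stated to cover polar singular sets, but the distributional Laplacian there carries the singular part $[f^*D]$, which must be kept rather than discarded. To handle this, excise tubular $\varepsilon$-neighbourhoods of $f^{-1}(D)\cap\overline{\Delta(r)}$ and apply Green's identity on the complement. Locally $u_D\circ f=-\log|s|^2+{}$smooth, for a local defining function $s$ of $f^*D$. The boundary integrals over the tubes then converge, as $\varepsilon\to0$, to the Lelong-type term $\frac{\pi^m}{(m-1)!}\int_{f^*D}g_r\alpha^{m-1}$, which is exactly the Poincaré-Lelong formula. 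This argument also needs $\partial\Delta(r)$ smooth, which holds for almost all $r$ by Sard's theorem; the statement for the remaining $r$ follows by continuity in $r$.
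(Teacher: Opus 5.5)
Your proposal is correct and is essentially the paper's proof: apply Lemma~\ref{dynkin} to $u_D\circ f$, then use the Poincar\'e--Lelong splitting $2dd^c[u_D\circ f]=f^*\omega-[f^*D]$ to identify the right-hand side with $T_f(r,\omega)-N_f(r,D)$; the only variation is that the paper gets polarity of the singular set by writing $u_D$ as a difference of subharmonic functions, whereas you use that $f^{-1}(D)$ together with the indeterminacy locus is a proper analytic set. For the bookkeeping you left open, the constant is $\Delta\psi\,dv=\frac{4\pi^m}{(m-1)!}\,dd^c\psi\wedge\alpha^{m-1}$ (with $\alpha^m=\frac{m!}{\pi^m}dv$), and locally $u_D\circ f$ equals $-\frac{1}{2}\log|s|^2$ plus a smooth function (not $-\log|s|^2$), which is what makes your tube-excision limit give exactly $N_f(r,D)$; that excision argument is extra rigor the paper leaves implicit in the distributional Green identity.
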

 \begin{proof} 
Let $\mathscr O(D)$ be the holomorphic line bundle over $N$ defined by $D.$
Let $s_D$ be the  canonical section  associated to $D,$ i.e., $s_D$ is a holomorphic section  of  $\mathscr O(D)$ over $N$ with zero divisor $D.$
Locally, we write  $s_D=\tilde s_D e,$ where $e$ is a local holomorphic frame of $\mathscr O(D).$ By Poincar\'e-Lelong formula, we obtain   
$[D]=dd^c[\log|\tilde s_D|^2]>0$
in the sense of currents. Moreover,   (\ref{888})  gives 
$$\omega=dd^c\big[2u_D+\log|\tilde s_D|^2\big]>0,$$
which leads to  that $2u_D+\log|\tilde s_D|^2$ is  plurisubharmonic.
Since $N$ is a K\"ahler manifold, we deduce   that  both $\log|\tilde s_D|^2$ and $2u_D+\log|\tilde s_D|^2$ are  subharmonic. 
Thus,  $u_D$ is the difference of two subharmonic functions. It implies that the set of singularities  of $u_D\circ f$ is polar.
Apply Green-Dynkin formula to $u_D\circ f,$  we obtain   
$$\int_{\partial\Delta(r)}u_D\circ fd\pi_{r}-u_D\circ f(o)=\frac{1}{2}\int_{\Delta(r)}g_{r}(o,x)\Delta (u_D\circ f)dv.$$
The first term on the left hand side equals $m_f(r, D),$ and the right hand side is  equal to  
 \begin{eqnarray*}
 &&  \frac{2\pi^m}{(m-1)!}\int_{\Delta(r)}g_{r}(o,x)dd^c[u_D\circ f]\wedge\alpha^{m-1}  \\
  &=&\frac{\pi^m}{(m-1)!}\int_{\Delta(r)}g_{r}(o,x)f^*\omega\wedge\alpha^{m-1} \\
  && -\frac{\pi^m}{(m-1)!}\int_{\Delta(r)}g_{r}(o,x)dd^c\left[\log|\tilde s_D\circ f|^2\right]\wedge\alpha^{m-1} \\
 &=& T_f(r, \omega)-N_f(r, D).
    \end{eqnarray*}
    This completes the proof.
\end{proof}

 \subsection{Calculus Lemma}~

  To  compute the gradient  of $g_r(o,x),$  we need  the following    lemma. 
   \begin{lemma}\label{thm4}    For  $0<t\leq r,$ we have
 $$g_r(o,x)=A\int_t^r \frac{sds}{V(s)}$$
 on $\partial\Delta(t),$  where $A$ is given by $(\ref{Gr}).$
  \end{lemma}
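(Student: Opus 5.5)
The identity follows directly from the definitions of $\partial\Delta(t)$ and $g_r(o,x)$; no analysis beyond the description of the boundary is needed. I will first identify the value of $G(o,x)$ on $\partial\Delta(t)$, then substitute it into the formula for $g_r(o,x)$.

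\emph{Step 1: the value of $G$ on the boundary.} By the description of the boundary given in Section \ref{sec31},
$$\partial\Delta(t)=\left\{x\in M:\ G(o,x)=A\int_t^\infty\frac{sds}{V(s)}\right\}.$$
Hence for $x\in\partial\Delta(t)$ we have $G(o,x)=A\int_t^\infty \frac{sds}{V(s)}$. This integral is finite because $M$ is non-parabolic, so $\int_1^\infty \frac{sds}{V(s)}<\infty$. It is also finite near the lower limit, since $t>0$ and $V$ is positive and continuous on $[t,\infty)$.

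\emph{Step 2: substitution.} By definition,
$$g_r(o,x)=G(o,x)-A\int_r^\infty\frac{sds}{V(s)}.$$
Inserting the value from Step 1 gives, on $\partial\Delta(t)$,
$$g_r(o,x)=A\int_t^\infty\frac{sds}{V(s)}-A\int_r^\infty\frac{sds}{V(s)}=A\int_t^r\frac{sds}{V(s)}.$$
The subtraction of the two convergent tails is legitimate because both are finite. The result is nonnegative since $t\le r$, which is consistent with $\partial\Delta(t)\subseteq\overline{\Delta(r)}$. In particular, at $t=r$ we recover the Dirichlet condition $g_r=0$ on $\partial\Delta(r)$.

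\emph{Main obstacle.} The only point that needs care is justifying the boundary description $\partial\Delta(t)=\{G(o,x)=A\int_t^\infty \frac{sds}{V(s)}\}$. I would argue it from the continuity of $G(o,\cdot)$ on $M\setminus\{o\}$ together with the strong maximum principle for the harmonic function $G(o,\cdot)$ away from the pole. The maximum principle rules out $G(o,\cdot)$ being locally constant at the level $A\int_t^\infty\frac{sds}{V(s)}$. Consequently, the level set coincides with the topological boundary of the superlevel set $\Delta(t)$. Since the paper already records this description, I would simply invoke it.
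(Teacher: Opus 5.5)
Your proof is correct and is essentially the paper's argument. The paper writes $g_r(o,x)=g_t(o,x)+A\int_t^r\frac{s\,ds}{V(s)}$ and uses the Dirichlet condition $g_t(o,\cdot)=0$ on $\partial\Delta(t)$, which is exactly your substitution of the boundary value of $G(o,\cdot)$. Note that the lemma only needs the inclusion $\partial\Delta(t)\subseteq\{x: G(o,x)=A\int_t^\infty\frac{s\,ds}{V(s)}\}$, which follows from the continuity of $G(o,\cdot)$ alone, so your maximum-principle discussion is unnecessary.
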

 \begin{proof}    Using  the definition of $g_r(o,x),$   we get  
  \begin{eqnarray*} 
g_r(o,x)&=& G(o,x)-A\int_{r}^\infty\frac{tdt}{V(t)}  \\
&=& G(o,x)-A\int_{t}^\infty\frac{sds}{V(s)}ds +A\int_{t}^r\frac{sds}{V(s)}  \\
 &=& g_t(o,x)+A\int_{t}^r\frac{sds}{V(s)}.   
   \end{eqnarray*}
 Since $g_t(o,x)=0$ on $\partial\Delta(t),$ 
  we have the desired result.
 \end{proof}

 \begin{theorem}\label{hh}  For  $0<t\leq r,$ we have
 $$\|\nabla g_r(o,x)\|=\frac{At}{V(t)}$$
  on $\partial\Delta(t).$
\end{theorem}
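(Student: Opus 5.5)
The plan is to reduce the gradient computation to a one-variable derivative, using Lemma \ref{thm4}. Introduce the function $\tau$ on $M\setminus\{o\}$ defined implicitly by
$$G(o,x)=A\int_{\tau(x)}^\infty\frac{sds}{V(s)}.$$
This is well defined because the right-hand side is strictly decreasing in $\tau$, tends to $\infty$ as $\tau\to0$ (since $V(s)\sim c s^{2m}$ near $0$), and tends to $0$ as $\tau\to\infty$. By construction $\partial\Delta(t)=\{\tau=t\}$ and $\Delta(t)=\{\tau<t\}$. Since $G(o,\cdot)$ is smooth away from $o$ and the integrand is positive, the implicit function theorem shows that $\tau$ is smooth wherever $\nabla G\neq0$; by Sard's theorem this holds on $\partial\Delta(t)$ for almost every $t$.

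Lemma \ref{thm4} (equivalently, the definition of $g_r$) gives, for $\tau(x)\le r$,
$$g_r(o,x)=A\int_{\tau(x)}^r\frac{sds}{V(s)}.$$
So $g_r$ is a function of $\tau$ alone near $\partial\Delta(t)$. The chain rule then gives
$$\nabla g_r(o,x)=-\frac{A\tau(x)}{V(\tau(x))}\nabla\tau(x),$$
which on $\partial\Delta(t)$ equals $-\frac{At}{V(t)}\nabla\tau$. The direction is the outward normal, consistent with $g_r$ decreasing toward the boundary. Hence the stated identity $\|\nabla g_r(o,x)\|=At/V(t)$ is equivalent to $\|\nabla\tau\|=1$ on $\partial\Delta(t)$. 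Geometrically, this says that the level sets $\partial\Delta(t)$ are moved at unit normal speed as $t$ varies, so that $t$ plays the role of the distance-type parameter $\rho$ in the Li-Yau comparison (\ref{Gr}).

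The key step, and the main obstacle, is proving $\|\nabla\tau\|=1$. My first attempt is to compare $\tau$ with $\rho$. The lower bound in (\ref{Gr}) reads $A\int_\rho^\infty\frac{sds}{V(s)}\le G(o,x)=A\int_\tau^\infty\frac{sds}{V(s)}$, which forces $\tau\le\rho$, so $\Delta(t)\supseteq B(t)$. I would then show that the normal distance from $\partial\Delta(t)$ to $\partial\Delta(t+h)$ is $h+o(h)$. To do this, I would follow the integral curves of $\nabla G/\|\nabla G\|$, parametrized by arclength, and use the sharp two-sided heat kernel bounds of Theorem \ref{ppttt} together with the gradient bound of Theorem \ref{gradient}, letting $\epsilon\to0$, to control $\frac{d}{ds}\tau$ along these curves. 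In the model case $M=\mathbb C^m$, $G$ is radial, $\tau=\rho$, and $\|\nabla\rho\|=1$ holds exactly, so the computation above is sharp there.

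In general, I expect this comparison to yield only $\|\nabla\tau\|=1$ up to multiplicative constants depending on $A,B$ and $m$, rather than exact equality. If that is the best obtainable, the fallback is as follows. Prove the exact chain-rule identity $\|\nabla g_r\|=\frac{At}{V(t)}\|\nabla\tau\|$, together with the two-sided bound on $\|\nabla\tau\|$. Then read the statement in the sense in which it is used downstream: $t$ is regarded as the normal arclength parameter of the exhaustion $\{\partial\Delta(t)\}$, so that $dg_r/d\nu=At/V(t)$ by differentiating Lemma \ref{thm4} in $t$. This reading is the point I would need to check most carefully against how the paper uses Theorem \ref{hh}.
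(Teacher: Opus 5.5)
\textbf{Verdict: the proposal has a gap, and the gap cannot be closed, because Theorem \ref{hh} is false in general.}

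Your reduction is correct, and it isolates exactly the step the paper's own proof passes over. The paper notes that $g_r$ is constant on $\partial\Delta(t)$. It infers that $\nabla g_r$ points ``in the direction in which $t$ decreases,'' and then writes $\|\nabla g_r\|=-A\frac{d}{dt}\int_t^r\frac{s\,ds}{V(s)}$. That is your identity $\|\nabla g_r\|=\frac{At}{V(t)}\|\nabla\tau\|$ with $\|\nabla\tau\|=1$ assumed silently: knowing the direction of $\nabla g_r$ says nothing about its length unless $t$ is arclength along the normal. So the paper's proof is your computation with the missing step asserted rather than proved.

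Here are three ways to see that the step fails.
\begin{itemize}
\item \emph{Flat $\mathbb C^m$, $m\ge 2$.} With the paper's normalization one computes $G(o,x)=\frac1m\int_{\rho(x)}^\infty\frac{s\,ds}{V(s)}$. Hence $\tau=(mA)^{1/(2m-2)}\rho$, and $\|\nabla g_r\|=(mA)^{1/(2m-2)}\frac{At}{V(t)}$ on $\partial\Delta(t)$. This equals the claimed value only if $A=1/m$.
\item \emph{$\mathbb C^m\times T^n$, one of the paper's own examples.} The isometry $\iota(z,w)=(-z,-w)$ fixes $o=(0,0)$, so $G(o,\iota x)=G(o,x)$. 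It also fixes $x_*=(0,w_*)$ for a nonzero $2$-torsion point $w_*$ of $T^n$, where $d\iota=-\mathrm{Id}$. This forces $\nabla G(o,x_*)=0$. Hence $\|\nabla g_r(o,x_*)\|=0<\frac{At_*}{V(t_*)}$ with $t_*=\tau(x_*)$, for every choice of $A$. Since $\|\nabla G\|$ is continuous and $G$ has no local extrema, the identity also fails on every level set $\partial\Delta(t)$ with $t$ close to $t_*$. So discarding a null set of $t$ does not rescue it.
\item \emph{Rigidity.} $\|\nabla\tau\|\equiv 1$ forces $\tau=\rho$. You already have $\tau\le\rho$, and the integral curve of $-\nabla\tau$ from $x$ is a unit-speed curve reaching $o$ after length $\tau(x)$, so $\rho\le\tau$. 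The flux identity $\int_{\partial B(\rho)}\frac12\|\nabla G\|\,d\sigma=1$ then gives $V'(\rho)=\frac{2V(\rho)}{A\rho}$. So $V(\rho)=\omega_{2m}\rho^{2m}$, and $M$ is flat $\mathbb C^m$ by Bishop--Gromov rigidity.
\end{itemize}

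Consequences for your two plans:
\begin{itemize}
\item Your first plan cannot succeed, however sharp the heat kernel bounds. Even the two-sided bound you anticipate fails, since no positive lower bound on $\|\nabla\tau\|$ is possible at critical points of $G$.
\item Your fallback is not a proof. Declaring $t$ to be the normal arclength parameter of the exhaustion is precisely the hypothesis $\|\nabla\tau\|=1$, so it assumes the conclusion.
\end{itemize}

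What you have actually established is the correct replacement: $\|\nabla g_r(o,x)\|=\frac{At}{V(t)}\|\nabla\tau\|$ wherever $\nabla G\neq 0$. This gives $d\pi_r=\frac A2\frac{r}{V(r)}\|\nabla\tau\|\,d\sigma_r$ in place of Corollary \ref{bbb}.

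On the downstream use you asked about: the coarea step $\int_{\Delta(r)}g_rk\,dv=\int_0^r dt\int_{\partial\Delta(t)}g_rk\,d\sigma_t$ in Theorem \ref{calculus} rests on the same assumption. Done correctly it carries the weight $\|\nabla\tau\|^{-1}$. The two sides of that argument then differ by a factor $\|\nabla\tau\|^{2}$, so a repair needs an upper gradient estimate for $G$, not a reinterpretation of Theorem \ref{hh}.

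A minor slip: $\nabla g_r$ is a negative multiple of $\nabla\tau$, so it points along the inward normal, not the outward one.
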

\begin{proof}  By Lemma \ref{thm4}, $g_r(o,x)$ is  a constant depending only on the parameter  $t$ on $\partial\Delta(t)$ with  $0<t\leq r,$ which means  that the gradient direction of $g_r(o,x)$ at a point 
 $x\in \partial\Delta(t)$ is 
exactly the direction in which  $t$ decreases.  Thus, one obtains
 $$\|\nabla g_r(o,x)\|=-A\frac{d}{dt}\int_t^r \frac{sds}{V(s)}=\frac{At}{V(t)}$$
 on $\partial\Delta(t).$
  \end{proof}
   
 Since $\partial\Delta(t)$ is  a lever surface with parameter $t$ determined  by $g_r(o,x)$ for  $0<t\leq r,$ the gradient direction of $g_r(o,x)$ at  a point $x\in \partial\Delta(t)$ is also   the inward normal 
 direction of $\partial\Delta(t)$ at $x.$ By Theorem \ref{hh}, we have 
    $$\frac{\partial g_r(o,x)}{\partial{\vec{\nu}}}=\|\nabla g_r(o, x)\|=\frac{Ar}{V(r)}$$
on $\partial\Delta(r).$ From the definition of   $d\pi_r,$      it is immediate that 

  \begin{cor}\label{bbb}  We have 
   $$d\pi_r= \frac{A}{2}\frac{r}{V(r)}d\sigma_r,$$
 where  $d\sigma_{r}$ is the induced Riemannian area element of 
$\partial \Delta(r).$
\end{cor}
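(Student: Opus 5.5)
The statement to prove is Corollary \ref{bbb}: on $\partial\Delta(r)$ one has $d\pi_r=\frac{A}{2}\frac{r}{V(r)}d\sigma_r$. The argument is a direct substitution into the definition of the harmonic measure,
$$d\pi_r=\frac12\frac{\partial g_r(o,x)}{\partial\vec\nu}\,d\sigma_r .$$
So all that is needed is the value of the inward normal derivative of $g_r(o,\cdot)$ on $\partial\Delta(r)$. I would get it in three steps.

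\emph{Step 1: the normal is the gradient direction.} Lemma \ref{thm4}, taken with $t=r$, shows that $g_r(o,\cdot)\equiv 0$ on $\partial\Delta(r)$. In particular $\partial\Delta(r)$ is a level set of $g_r(o,\cdot)$, and by Sard's theorem it is a smooth hypersurface for almost every $r$. Since $g_r>0$ inside $\Delta(r)$ and $g_r=0$ on the boundary, the unit inward normal at a regular point of the level set is $\vec\nu=\nabla g_r/\|\nabla g_r\|$.

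\emph{Step 2: identify the normal derivative with the gradient norm.} It follows from Step 1 that
$$\frac{\partial g_r}{\partial\vec\nu}=\langle\nabla g_r,\vec\nu\rangle=\|\nabla g_r\| .$$

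\emph{Step 3: evaluate and substitute.} Theorem \ref{hh} with $t=r$ gives $\|\nabla g_r(o,x)\|=\frac{Ar}{V(r)}$ on $\partial\Delta(r)$. Inserting this into the definition of $d\pi_r$ yields $d\pi_r=\frac{A}{2}\frac{r}{V(r)}d\sigma_r$, as claimed.

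The only delicate point is Step 1: the regularity of $\partial\Delta(r)$ and the nonvanishing of $\nabla g_r$ there. I would state the identity for the almost every $r$ for which $\partial\Delta(r)$ is a regular level set. This suffices because the formula is only ever used inside integrals over $r$ or for generic $r$. The gradient is automatically nonzero on such a set, since Theorem \ref{hh} gives the value $Ar/V(r)>0$.
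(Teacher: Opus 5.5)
Your Steps 1 and 2 are the paper's own argument and they are fine: on a regular level set they give the correct identity $d\pi_r=\frac12\|\nabla G(o,\cdot)\|\,d\sigma_r$. The gap is Step 3. There, as the paper does, you take $\|\nabla g_r\|=Ar/V(r)$ from Theorem \ref{hh}. That theorem is not valid, and the corollary fails with it. Write $\tau(x)$ for the parameter with $x\in\partial\Delta(\tau(x))$. Lemma \ref{thm4} gives $g_r=A\int_{\tau}^{r}\frac{s\,ds}{V(s)}$, hence
\[
\|\nabla g_r(o,x)\|=\frac{A\tau(x)}{V(\tau(x))}\,\|\nabla\tau(x)\|.
\]
The proof of Theorem \ref{hh} silently takes $\|\nabla\tau\|\equiv1$, i.e.\ it treats $\tau$ as a distance function and the level sets of $G(o,\cdot)$ as equidistant hypersurfaces. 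Nothing justifies this. $A$ is merely an admissible lower constant in (\ref{Gr}); every smaller constant is admissible too, while the claimed formula depends on $A$. Moreover, $\|\nabla G(o,\cdot)\|$ is in general not even constant along $\partial\Delta(r)$, so $d\pi_r$ need not be a constant multiple of $d\sigma_r$ at all.

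For a concrete failure, take flat $\mathbb{C}^m$, $m\geq2$, of real dimension $n=2m$, and let $\omega_n$ be the volume of the unit ball. Then $V(t)=\omega_nt^n$ and $G(o,x)=\frac{2}{(n-2)n\omega_n}\rho^{2-n}=\frac2n\int_\rho^\infty\frac{t\,dt}{V(t)}$, so (\ref{Gr}) holds for every $A\leq 2/n$. If $A<2/n$, then $\Delta(r)=B(\lambda r)$ with $\lambda=(2/(nA))^{1/(n-2)}>1$. On $\partial\Delta(r)$ one finds $\|\nabla g_r\|=\frac{2}{n\omega_n}(\lambda r)^{1-n}=\lambda^{-1}\frac{Ar}{V(r)}$. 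Consequently the measure $\frac A2\frac{r}{V(r)}\,d\sigma_r$ has total mass $\frac{nA}{2}\lambda^{n-1}=\lambda>1$, whereas $\pi_r$ is a probability measure (Lemma \ref{dynkin} with $\phi\equiv1$). The identity therefore fails for every $r>0$, so no ``almost every $r$'' caveat can save Step 3.

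Choosing $A$ cleverly does not help in general either. On $\mathbb{C}^m\times T^k$ the analogous asymptotic computation forces $A=1/(m+k)$ as $r\to0$ but $A=1/m$ as $r\to\infty$. The identity holds only in rigid situations, such as flat $\mathbb{C}^m$ with the sharp constant $A=1/m$.

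What Steps 1--2 do yield is only an inequality, once combined with a genuine gradient estimate such as Cheng--Yau's $\|\nabla G\|\leq C\,G/\rho$ and the fact that $\rho\geq r$ on $\partial\Delta(r)$: namely $d\pi_r\leq\frac{CA}{2r}\int_r^\infty\frac{t\,dt}{V(t)}\,d\sigma_r$. Compare the factor $\frac1r\int_r^\infty\frac{t\,dt}{V(t)}$ in $H(r,\delta)$ of Theorem A.
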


  We need the following Borel's growth  lemma (see  \cite{nonpara, No, ru}). 
 \begin{lemma}[Borel's Growth Lemma, \cite{nonpara}]\label{} Let $u\geq0$ be a non-decreasing  function  on $(r_0, \infty)$ with $r_0\geq0.$ Then for any $\delta>0,$ there exists a subset $E_\delta\subseteq(r_0,\infty)$
 of finite Lebesgue measure such that  
 $$u'(r)\leq u(r)^{1+\delta}$$
 holds for all $r>r_0$ outside $E_{\delta}.$  
 \end{lemma}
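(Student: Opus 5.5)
The plan is to reduce to the case of a strictly increasing, continuous (even absolutely continuous) function, and then run the classical measure-theoretic argument that compares the Lebesgue measure of the exceptional set with an integral of $du/u^{1+\delta}$.

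First, dispose of trivial situations. If $u\equiv 0$ on some interval $(r_0,r_1)$, then $u'=0$ there and the inequality $u'\le u^{1+\delta}$ holds. Since $u$ is non-decreasing, $u$ is differentiable almost everywhere by Lebesgue's theorem, so the set of points where $u'$ fails to exist has measure zero and may be thrown into $E_\delta$. We may therefore assume there is $r_1\ge r_0$ with $u(r)>0$ for $r>r_1$, and put $(r_0,r_1]$ into consideration separately: there $u=0$, hence $u'=0$ at points of differentiability. So only $(r_1,\infty)$ matters.

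Next, define
$$E_\delta=\{r>r_1:\ u'(r)\text{ exists and } u'(r)>u(r)^{1+\delta}\}$$
together with the null set of non-differentiability points. The key estimate is
$$|E_\delta|\le \int_{E_\delta}\frac{u'(r)}{u(r)^{1+\delta}}\,dr\le \int_{r_2}^\infty\frac{u'(r)}{u(r)^{1+\delta}}\,dr,$$
where we first handle the part of $E_\delta$ in $(r_1,r_2]$ for some fixed $r_2>r_1$, which is of finite measure trivially. For the tail, we use the inequality for monotone functions $\int_a^b u'\,dr\le u(b)-u(a)$, applied to the composed non-increasing function $\phi(r)=-\frac{1}{\delta}u(r)^{-\delta}$. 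Since $\phi$ is non-decreasing, $\phi'=u'u^{-1-\delta}$ almost everywhere (chain rule at points of differentiability), and
$$\int_{r_2}^{b}\frac{u'}{u^{1+\delta}}\,dr\le \phi(b)-\phi(r_2)\le \frac{1}{\delta\,u(r_2)^{\delta}}<\infty$$
uniformly in $b$. Letting $b\to\infty$ gives the finiteness of $|E_\delta|$.

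The main obstacle is the lack of absolute continuity: $u$ may have jumps or a Cantor part, so the fundamental theorem of calculus is unavailable. This is exactly why the one-sided inequality $\int_a^b\phi'\le\phi(b)-\phi(a)$ for monotone $\phi$ (a consequence of Fatou's lemma applied to difference quotients) is used in place of equality. The singular part only helps the inequality. Care is also needed that $u(r_2)>0$, which is guaranteed by the choice of $r_2>r_1$.
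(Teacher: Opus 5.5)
The paper does not prove this lemma; it cites it from the author's earlier work and the standard monographs. Your argument is the standard proof from those sources, and it is correct: take a tail $(r_2,\infty)$ on which $u\ge u(r_2)>0$, bound the measure of the exceptional set there by $\int_{r_2}^{\infty}u'u^{-1-\delta}\,dr\le \delta^{-1}u(r_2)^{-\delta}$, and absorb the bounded interval $(r_0,r_2]$ and the null set of non-differentiability points into $E_\delta$. Using $\int_a^b\phi'\le\phi(b)-\phi(a)$ for monotone $\phi$ correctly handles the possible lack of absolute continuity. One slip: $\phi=-\delta^{-1}u^{-\delta}$ is non-decreasing, not non-increasing, on $(r_1,\infty)$.
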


We establish  the following  Calculus Lemma. 

 \begin{theorem}[Calculus Lemma]\label{calculus}
Let $k\geq0$ be a locally integrable function on $M.$ Assume that $k$ is locally bounded at $o.$ Then for any $\delta>0,$  there exists  a subset $E_{\delta}\subseteq(0,\infty)$ of finite Lebesgue measure such that
$$\int_{\partial\Delta(r)}kd\pi_r\leq \frac{1}{2}\left(\frac{V(r)}{r}\right)^{\delta}\bigg(\int_{\Delta(r)}g_r(o,x)kdv\bigg)^{(1+\delta)^2}$$
holds for all $r>0$ outside $E_{\delta}.$  
\end{theorem}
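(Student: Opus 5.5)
We will run the standard two-step Borel argument, using the level-set structure of $g_r$ from Theorem \ref{hh} and Corollary \ref{bbb} to turn volume integrals into iterated integrals over the spheres $\partial\Delta(t)$.

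\textbf{Step 1: coarea rewriting.} Using $\|\nabla g_r(o,x)\|=At/V(t)$ on $\partial\Delta(t)$ and $g_r(o,x)=A\int_t^r sds/V(s)$ there (Lemma \ref{thm4}), the coarea formula (with the parameter $t$ as level function, valid for a.e. $t$ by Sard) gives
$$\int_{\Delta(r)}g_r(o,x)k\,dv=\int_0^r A\Big(\int_t^r\frac{sds}{V(s)}\Big)\frac{V(t)}{At}\int_{\partial\Delta(t)}k\,d\sigma_t\,dt
=\int_0^r\Big(\int_t^r\frac{sds}{V(s)}\Big)\frac{V(t)}{t}\Big(\int_{\partial\Delta(t)}k\,d\sigma_t\Big)dt .$$
Here $dv=d\sigma_t\,dt/\|\nabla t\|$ with $\|\nabla t\|=\|\nabla g_r\|/(At/V(t))^{-1}$-type bookkeeping; the point is that $dt/\|\nabla t\|$ produces the factor $V(t)/(At)\cdot At/V(t)$ correctly, and the local boundedness of $k$ at $o$ makes everything finite near $t=0$. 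Swapping the order of integration,
$$\Lambda(r):=\int_{\Delta(r)}g_r(o,x)k\,dv=\int_0^r\frac{sds}{V(s)}\int_0^s\frac{V(t)}{t}\Big(\int_{\partial\Delta(t)}k\,d\sigma_t\Big)dt .$$

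\textbf{Step 2: two applications of Borel.} Put $\Gamma(s)=\int_0^s \frac{V(t)}{t}\int_{\partial\Delta(t)}k\,d\sigma_t\,dt$, which is nondecreasing, so that $\Lambda'(r)=\frac{r}{V(r)}\Gamma(r)$. Borel's growth lemma applied to $\Gamma$ gives $\Gamma'(r)\le\Gamma(r)^{1+\delta}$ outside a set of finite measure. Applied to $\Lambda$, it gives $\Lambda'(r)\le\Lambda(r)^{1+\delta}$. Combining,
$$\frac{V(r)}{r}\int_{\partial\Delta(r)}k\,d\sigma_r=\Gamma'(r)\le\Big(\frac{V(r)}{r}\Lambda'(r)\Big)^{1+\delta}\le\Big(\frac{V(r)}{r}\Big)^{1+\delta}\Lambda(r)^{(1+\delta)^2}.$$
Corollary \ref{bbb} gives $\int_{\partial\Delta(r)}k\,d\pi_r=\frac{A}{2}\frac{r}{V(r)}\int_{\partial\Delta(r)}k\,d\sigma_r$, hence
$$\int_{\partial\Delta(r)}k\,d\pi_r\le\frac{A}{2}\Big(\frac{V(r)}{r}\Big)^{\delta}\Lambda(r)^{(1+\delta)^2}.$$

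\textbf{The constant $A$.} The main obstacle is that the stated inequality has coefficient $\frac12$ rather than $\frac A2$. To absorb it, I would first try normalizing $A\le1$: the lower bound (\ref{Gr}) still holds if $A$ is decreased, and $\Delta(r)$ is defined with that same $A$. Alternatively, one can apply Borel with a slightly smaller $\delta$ and absorb the constant into the exponent when $\Lambda\ge1$. The case $\Lambda<1$ would need a separate remark. Beyond this, the remaining care points are that the Borel lemma is stated for functions on $(r_0,\infty)$, which may require starting at some $r_0>0$ to ensure positivity, and that the exceptional sets are unions of finitely many sets of finite measure.
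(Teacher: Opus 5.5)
Your overall architecture (coarea over the level sets $\partial\Delta(t)$, Borel's lemma applied twice, then Corollary \ref{bbb}) is the same as the paper's. However, Step 1 is wrong, and that error is what creates your ``obstacle''. Lemma \ref{thm4} gives $\nabla g_r=-\frac{At}{V(t)}\nabla t$ for the level parameter $t$, and Theorem \ref{hh} says $\|\nabla g_r\|=At/V(t)$. Together these say $\|\nabla t\|=1$, so the coarea formula is simply $dv=dt\,d\sigma_t$ and
\[
\Lambda(r)=\int_{\Delta(r)}g_r(o,x)k\,dv=A\int_0^r\Big(\int_t^r\frac{s\,ds}{V(s)}\Big)\Big(\int_{\partial\Delta(t)}k\,d\sigma_t\Big)dt,
\]
with no factor $V(t)/(At)$. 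Your own parenthetical says the net factor is $\frac{V(t)}{At}\cdot\frac{At}{V(t)}=1$, which contradicts your display. The relation you wrote, $\|\nabla t\|=\|\nabla g_r\|/(At/V(t))^{-1}$, should read $\|\nabla t\|=\|\nabla g_r\|/(At/V(t))$. In effect you used $1/\|\nabla g_r\|$ as the Jacobian while integrating in $dt$ rather than in the level value of $g_r$. So your $\Gamma$, and the whole chain built on it, rests on a false identity. There is also an algebra slip downstream. From your bound $\int_{\partial\Delta(r)}k\,d\sigma_r\le (V(r)/r)^{\delta}\Lambda(r)^{(1+\delta)^2}$, multiplying by $\frac A2\frac{r}{V(r)}$ gives $\frac A2(V(r)/r)^{\delta-1}\Lambda(r)^{(1+\delta)^2}$, not $\frac A2(V(r)/r)^{\delta}\Lambda(r)^{(1+\delta)^2}$.

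With the correct identity there is no constant problem. One has $\Lambda'(r)=\frac{Ar}{V(r)}\int_0^r\int_{\partial\Delta(t)}k\,d\sigma_t\,dt$, so $\Phi(r):=\frac{V(r)}{r}\Lambda'(r)$ is nondecreasing with $\Phi'(r)=A\int_{\partial\Delta(r)}k\,d\sigma_r$. Borel applied to $\Phi$ and to $\Lambda$ gives
\[
A\int_{\partial\Delta(r)}k\,d\sigma_r\le \Big(\frac{V(r)}{r}\Big)^{1+\delta}\Lambda(r)^{(1+\delta)^2}
\]
off a set of finite measure. Multiplying by $\frac12\frac{r}{V(r)}$, i.e.\ using $d\pi_r=\frac A2\frac{r}{V(r)}d\sigma_r$, yields exactly $\frac12(V(r)/r)^{\delta}\Lambda(r)^{(1+\delta)^2}$. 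The $A$ carried by $g_r$ cancels the $A$ in $d\pi_r$, and this is precisely the paper's proof. Your proposed remedies would not have closed the gap anyway. Shrinking $A$ changes $\Delta(r)$, $g_r$ and $\pi_r$, so it proves the statement only for a special choice of $A$, not for the one already fixed via (\ref{Gr}). The ``absorb into the exponent'' idea is left unfinished and, as you note yourself, fails when $\Lambda(r)<1$.
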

 
  \begin{proof} 
 Invoking  Lemma  \ref{thm4}, we have   
   \begin{eqnarray*}
 \int_{\Delta(r)}g_r(o,x)kdv 
 &=&\int_0^rdt \int_{\partial\Delta(t)}g_r(o,x)kd\sigma_t \\
    &=&  A\int_0^r\left(\int_{t}^r\frac{sds}{V(s)}\right)dt \int_{\partial\Delta(t)}kd\sigma_t.
   \end{eqnarray*}
 Set 
 $$\Lambda(r)=A\int_0^{r}\left(\int_{t}^{r}\frac{sds}{V(s)}\right)dt \int_{\partial\Delta(t)}kd\sigma_t.$$
 A simple computation leads to  
 $$\Lambda'(r)=\frac{Ar}{V(r)}\int_0^rdt\int_{\partial\Delta(t)}kd\sigma_t.$$
 In further, we have
 $$\frac{d}{dr}\left(\frac{V(r)\Lambda'(r)}{r}\right)=A\int_{\partial\Delta(r)}kd\sigma_r.$$
Employing   Borel's growth lemma  to the left hand side of the above equality  twice:  one is to $V(r)\Lambda'(r)/r$ and  another   is to 
 $\Lambda'(r),$ then  we conclude   that for any  $\delta>0,$   there exists a subset $E_\delta\subseteq(0,\infty)$ of finite Lebesgue measure such that 
$$ \int_{\partial\Delta(r)}kd\sigma_r\leq \frac{1}{A}\left(\frac{V(r)}{r}\right)^{1+\delta}\Lambda(r)^{(1+\delta)^2}
$$ holds for all $r>0$ outside $E_\delta.$  
  On the other hand,  Corollary \ref{bbb} gives  
   $$d\pi_r= \frac{A}{2}\frac{r}{V(r)}d\sigma_r.$$
Put together  the above, we conclude that  
   \begin{eqnarray*}
 \int_{\partial\Delta(r)}kd\pi_r &\leq& \frac{1}{2}\left(\frac{V(r)}{r}\right)^{\delta}\Lambda(r)^{(1+\delta)^2}\\ 
 &=&\frac{1}{2}\left(\frac{V(r)}{r}\right)^{\delta}\bigg(\int_{\Delta(r)}g_r(o,x)kdv\bigg)^{(1+\delta)^2}.
   \end{eqnarray*}
 holds for all $r>0$ outside $E_\delta.$  
 \end{proof}

 \begin{cor}\label{calculus1}
Let $k\geq0$ be a locally integrable function on $M.$ Assume that $k$ is locally bounded at $o.$ Then for any $\delta>0,$   there exists   a subset $E_{\delta}\subseteq(0,\infty)$ of finite Lebesgue measure such that
$$\log^+\int_{\partial\Delta(r)}kd\pi_r\leq  (1+\delta)^2\log^+\int_{\Delta(r)}g_r(o,x)kdv+O\big(\delta\log r\big)$$
holds for all $r>0$ outside $E_{\delta}.$  
\end{cor}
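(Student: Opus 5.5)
We take logarithms in the inequality of Theorem \ref{calculus} and estimate the geometric factor using only the volume bound from Corollary \ref{volume}. Fix $\delta>0$ and let $E_\delta$ be the exceptional set of finite Lebesgue measure from Theorem \ref{calculus}. For $r\notin E_\delta$ we have
$$\int_{\partial\Delta(r)}kd\pi_r\leq \frac{1}{2}\left(\frac{V(r)}{r}\right)^{\delta}\bigg(\int_{\Delta(r)}g_r(o,x)kdv\bigg)^{(1+\delta)^2}.$$
Since $\log^+$ is nondecreasing and satisfies $\log^+(ab)\leq\log^+a+\log^+b$ for $a,b\geq0$, and since $\tfrac12\leq1$, this gives
$$\log^+\int_{\partial\Delta(r)}kd\pi_r\leq \delta\log^+\frac{V(r)}{r}+(1+\delta)^2\log^+\int_{\Delta(r)}g_r(o,x)kdv.$$

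The remaining task is to control $\log^+(V(r)/r)$ by $O(\log r)$, with a constant independent of $r$. The manifold $M$ has real dimension $2m$ and nonnegative Ricci curvature, so Corollary \ref{volume} gives $V(r)\leq\omega_{2m}r^{2m}$. Hence $V(r)/r\leq\omega_{2m}r^{2m-1}$, and for $r\geq1$,
$$\log^+\frac{V(r)}{r}\leq(2m-1)\log r+\log^+\omega_{2m}.$$
The constant term can be absorbed for large $r$, giving $\delta\log^+(V(r)/r)=O(\delta\log r)$. The implied constant depends only on $m$; this matters if one later replaces $\delta$ by a smaller value.

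The only subtle point is the range $0<r<1$, where $\log r<0$ and the notation $O(\delta\log r)$ has to be read appropriately. There are two ways to handle it. First, $V(r)/r\leq\omega_{2m}r^{2m-1}\leq\omega_{2m}$ there, so the error term is bounded by the constant $\delta\log^+\omega_{2m}$, and the usual convention interprets $O(\delta\log r)$ as $O(\delta\log^+ r)+O(1)$. Alternatively, one can add $(0,1]$ to the exceptional set, which still leaves it of finite Lebesgue measure. With either choice the inequality holds for all $r>0$ outside a set of finite measure, and the corollary follows.
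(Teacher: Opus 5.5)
Your proposal is correct and follows the route the paper intends. The paper states this corollary without proof as an immediate consequence of Theorem \ref{calculus}: take $\log^+$ of that inequality and bound $\delta\log^+(V(r)/r)$ by $O(\delta\log r)$ using the Bishop--Gromov estimate $V(r)\leq\omega_{2m}r^{2m}$ from Corollary \ref{volume}, which is exactly what you do; your handling of $0<r<1$ and of the implied constant is a harmless addition.
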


  \subsection{Second Main Theorem}~
  
      Define    
$$T(r,\mathscr R)= \frac{\pi^m}{(m-1)!}\int_{\Delta(r)}g_r(o,x)\mathscr R\wedge\alpha^{m-1},$$
where $\mathscr R=-dd^c\log\det(g_{i\bar j})$ is the Ricci form of  $M.$  

Let $D_1,\cdots, D_q$ be  effective divisors  on $N$ such that each $D_j$ is cohomologous to $\omega,$ i.e., there exist   functions $u_{D_1},\cdots, u_{D_q}\geq0$ on $N$ such that 
$$\omega-[D_j]=2dd^c[u_{D_j}], \ \ \ \  j=1,\cdots,q.$$ 
Set
$$\Psi=\frac{\omega^n}{\prod_{j=1}^q u_{D_j}^{2}e^{-2u_{D_j}}}.$$

\begin{lemma}\cite{gri}\label{grii}
Assume that   $D_1, \cdots, D_q$ are  in general position. 
If $qw-{\rm Ric}(\omega^n)>0,$ then 

$(a)$  $-{\rm Ric}\Psi\geq0;$

$(b)$ there exists a constant $c>0$ such that $(-{\rm Ric}\Psi)^n\geq c\Psi;$

$(c)$ $\int_{N\setminus D}(-{\rm Ric}\Psi)^n<\infty,$ where $D=D_1+\cdots+D_q.$
\end{lemma}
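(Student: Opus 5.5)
The plan is to compute $-{\rm Ric}\,\Psi$ explicitly on $N\setminus D$ and then analyse it locally near $D$ in adapted coordinates. We use the convention ${\rm Ric}(\Omega)=-dd^c\log\Omega$ for a volume form $\Omega$ (locally, $\log$ of its density). We may also replace each $u_{D_j}$ by $u_{D_j}+C$ for a large constant $C$, which does not affect $\omega-[D_j]=2dd^c[u_{D_j}]$ and only multiplies $\Psi$ by a positive constant. We therefore assume $u_{D_j}\geq1$.

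\emph{Step 1: the basic identity.} On $N\setminus D$,
$$-{\rm Ric}\,\Psi=-{\rm Ric}(\omega^n)+\sum_{j=1}^q dd^c\log\big(u_{D_j}^2e^{-2u_{D_j}}\big).$$
Since $2dd^c u_{D_j}=\omega$ off $D_j$, each term contributes $-\omega+2dd^c\log u_{D_j}$. Computing instead with the sign $dd^c\log\Psi$ gives
$$-{\rm Ric}\,\Psi=\big(q\omega-{\rm Ric}(\omega^n)\big)-2\sum_{j}dd^c\log u_{D_j}.$$
Then we use
$$-2\,dd^c\log u=-\frac{2dd^cu}{u}+\frac{2\,du\wedge d^cu}{u^2}=\frac{\omega}{u}+\frac{2\,du\wedge d^cu}{u^2}.$$
The sign conventions must be tracked carefully so that the logarithmic term enters with this sign, which is the standard Carlson--Griffiths normalization. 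Both terms are $\geq0$, and $q\omega-{\rm Ric}(\omega^n)>0$ by hypothesis, so $(a)$ follows. In fact $-{\rm Ric}\,\Psi\geq c_0\omega+2\sum_j du_{D_j}\wedge d^cu_{D_j}/u_{D_j}^2$ for some $c_0>0$.

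\emph{Step 2: local model.} By the Poincar\'e--Lelong argument in the proof of Theorem \ref{first}, locally $2u_{D_j}=-\log|\tilde s_j|^2+\varphi_j$ with $\varphi_j$ smooth. Consequently $e^{-2u_{D_j}}=|\tilde s_j|^2e^{-\varphi_j}$, and $u_{D_j}\sim-\frac12\log|\tilde s_j|^2\to+\infty$ near $D_j$. General position means that near any point of $N$ at most $n$ of the divisors pass through it, and there we can choose local coordinates $z_1,\dots,z_n$ with $D_j=\{z_j=0\}$ for $j=1,\dots,k$ ($k\leq n$). In these coordinates $du_{D_j}=-\frac12 dz_j/z_j+O(1)$, up to conjugate parts. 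Hence
$$\frac{du_{D_j}\wedge d^cu_{D_j}}{u_{D_j}^2}\gtrsim\frac{\sqrt{-1}\,dz_j\wedge d\bar z_j}{|z_j|^2\log^2|z_j|^2}-(\text{smaller errors}),$$
where the errors are absorbed by $c_0\omega$ after shrinking the neighbourhood. This gives a Poincar\'e-type lower bound
$$(-{\rm Ric}\,\Psi)^n\geq c\,\prod_{j\leq k}\frac{1}{|z_j|^2\log^2|z_j|^2}\,\omega^n .$$
A matching upper bound follows the same way, since $\omega/u\leq\omega$ and the cross terms are controlled.

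\emph{Step 3: conclusions.} Locally
$$\Psi\asymp\omega^n\prod_{j\leq k}\frac{|z_j|^2}{\log^2|z_j|^2},$$
with the other factors bounded above and below. So the ratio $\Psi/(-{\rm Ric}\,\Psi)^n\lesssim\prod|z_j|^4$ is bounded. Compactness of $N$ and a finite cover then give $(b)$. For $(c)$, the upper bound $(-{\rm Ric}\,\Psi)^n\leq C\prod_j\big(|z_j|^2\log^2|z_j|^2\big)^{-1}\omega^n$ is integrable near $z_j=0$, because $\int_{|z|<1/2}\frac{dA}{|z|^2\log^2|z|^2}<\infty$. Again cover $N$ by finitely many such charts.

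The main obstacle is Step 2. The goal is to show rigorously that the mixed terms coming from $\varphi_j$, together with the interaction between different $j$, do not destroy the product lower bound for $(-{\rm Ric}\,\Psi)^n$; the strict positivity $q\omega-{\rm Ric}(\omega^n)>0$ is what absorbs them.
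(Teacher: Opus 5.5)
Your overall plan (compute $-{\rm Ric}\,\Psi$ on $N\setminus D$, prove Poincar\'e-type two-sided bounds in adapted coordinates, then patch by compactness) is the Carlson--Griffiths argument that the paper simply cites. However, the step that is supposed to give $(a)$ fails. Since $2dd^cu_{D_j}=\omega$ off $D_j$, your own intermediate expression $-2dd^cu/u$ equals $-\omega/u$, not $+\omega/u$. The correct identity on $N\setminus D$ is
\[
-{\rm Ric}\,\Psi=\big(q\omega-{\rm Ric}(\omega^n)\big)-\sum_{j=1}^q\frac{\omega}{u_{D_j}}+2\sum_{j=1}^q\frac{du_{D_j}\wedge d^cu_{D_j}}{u_{D_j}^2}.
\]
So the term you declare nonnegative is nonpositive. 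Under your normalization $u_{D_j}\geq1$ it can be as negative as $-q\omega$, which $q\omega-{\rm Ric}(\omega^n)$ need not dominate. The missing idea is the quantitative Carlson--Griffiths normalization. By compactness $q\omega-{\rm Ric}(\omega^n)\geq c_0\omega$, and you must shift to $u_{D_j}\geq C$ with $q/C\leq c_0/2$ (in Carlson--Griffiths' language, $\|s_j\|$ small); this is where the hypothesis really enters. This shift is also not cosmetic: $u_{D_j}\mapsto u_{D_j}+C$ multiplies $\Psi$ by $e^{2qC}\prod_j u_{D_j}^2/(u_{D_j}+C)^2$, which is not constant and does change ${\rm Ric}\,\Psi$. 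What you can prove is the lemma for suitably normalized $u_{D_j}$, and that normalization is genuinely needed. For $n\geq2$, near a point $p\notin D$ with $u_{D_j}(p)=0<u_{D_i}(p)$ for $i\neq j$, the value of $-{\rm Ric}\,\Psi$ on unit vectors in $\ker\partial u_{D_j}$ tends to $-\infty$ as $x\to p$. The normalization is harmless in the application, since it changes each $m_f(r,D_j)$ only by $O(1)$.

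Step 3 has a second error. The factor $e^{-2u_{D_j}}\asymp|z_j|^2$ sits in the \emph{denominator} of $\Psi$, so $\Psi\asymp\prod_{j\leq k}\big(|z_j|^2\log^2|z_j|^2\big)^{-1}\omega^n$, not $\prod_{j\leq k}|z_j|^2\log^{-2}|z_j|^2\,\omega^n$. Hence $\Psi$ and $(-{\rm Ric}\,\Psi)^n$ have exactly the same singularity, $(b)$ is sharp, and the ratio is only bounded, not $O(\prod_j|z_j|^4)$. The conclusion survives, but only because your Step 2 lower bound keeps the full factor $|z_j|^{-2}\log^{-2}|z_j|^2$; any loss there would break $(b)$.

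To get that lower bound rigorously, note that the cross terms between $dz_j/z_j$ and $\partial\varphi_j$ have size $|z_j|^{-1}\log^{-2}|z_j|$. They are unbounded, so $c_0\omega$ cannot absorb them directly. Instead, with $2u_{D_j}=-\log|z_j|^2+\varphi_j$, write $\partial u_{D_j}/u_{D_j}=a+b$ where $a=-dz_j/(2u_{D_j}z_j)$ and $b=\partial\varphi_j/(2u_{D_j})$. Then use $\sqrt{-1}(a+b)\wedge\overline{(a+b)}\geq\frac12\sqrt{-1}a\wedge\bar a-\sqrt{-1}b\wedge\bar b$, which leaves an error $O(C^{-2})\omega$ that is absorbed for $C$ large.

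Finally, the adapted coordinates require reading ``general position'' as each $D_j$ smooth with $\sum_jD_j$ having normal crossings, which is the Carlson--Griffiths hypothesis. ``At most $n$ of the $D_j$ through each point'' alone does not provide them. With these corrections your argument becomes the standard proof, and $(c)$ goes through as you wrote it.
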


\begin{theorem}[Second Main Theorem]\label{main}   Let $M$ be a non-parabolic complete noncompact    K\"ahler manifold with nonnegative Ricci curvature. 
 Let $(N, \omega)$ be a compact  K\"ahler manifold of complex dimension not greater than that  of $M.$
 Let $D_1,\cdots, D_q$ be effective divisors in general position on $N$ such that each $D_j$ is cohomologous to $\omega.$
  Let $f:M\rightarrow N$ be a differentiably non-degenerate meromorphic mapping.  Assume that $q\omega-{\rm Ric}(\omega^n)>0.$ Then  for any $\delta>0,$ there exists a subset $E_\delta\subseteq(0, \infty)$ of finite Lebesgue measure such that 
$$qT_f(r, \omega)+T_f(r, K_N)+T(r, \mathscr R)\leq \sum_{j=1}^q\overline N_f(r,D_j)+O\left(\log^+T_f(r,\omega)+ \delta\log r\right)$$
holds for all $r>0$ outside $E_\delta.$    
\end{theorem}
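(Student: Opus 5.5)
We follow the standard Carlson--Griffiths scheme, using the Green--Dynkin formula (Lemma \ref{dynkin}) for the exhaustion $\Delta(r)$ and the Calculus Lemma in the form of Corollary \ref{calculus1}. Since $f$ is differentiably non-degenerate and $n\leq m$, we can write
$$f^*\Psi\wedge\alpha^{m-n}=\xi\,\alpha^m$$
with $\xi\geq0$ smooth outside a proper analytic set. Here $\Psi$ is the singular volume form of Lemma \ref{grii}. The key identity comes from applying $dd^c\log$ to this relation:
$$dd^c[\log\xi]\wedge\alpha^{m-1}\geq f^*(-{\rm Ric}\Psi)\wedge\alpha^{m-1}-f^*{\rm Ric}(\omega^n)\wedge\alpha^{m-1}+\mathscr R\wedge\alpha^{m-1}+\sum_j\big([f^*D_j]-[f^{-1}(D_j)]\big)\wedge\alpha^{m-1}$$
in the sense of currents. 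More precisely, we expand $\log\xi$ as $\log\frac{f^*\omega^n\wedge\alpha^{m-n}}{\alpha^m}-\sum_j\big(2\log u_{D_j}\circ f-2u_{D_j}\circ f\big)$ and then use:
\begin{itemize}
\item $-dd^c\log\det(g_{i\bar j})=\mathscr R$;
\item $\omega-[D_j]=2dd^c[u_{D_j}]$;
\item the branching (ramification) divisor of $f$, which is nonnegative and combines with $f^*D_j$ to leave at least $-\overline N_f(r,D_j)$ after the ramification contribution is dropped.
\end{itemize}

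Next we multiply by $g_r(o,x)$, integrate over $\Delta(r)$, and apply Green--Dynkin to $\log\xi$. Using the definitions of $T_f(r,\omega)$, $T_f(r,K_N)$ (built from $-{\rm Ric}(\omega^n)$), $T(r,\mathscr R)$, $N_f$ and $\overline N_f$, this gives
$$\frac12\int_{\partial\Delta(r)}\log\xi\,d\pi_r+O(1)\geq T_{f}(r,-{\rm Ric}\Psi)+T_f(r,K_N)+T(r,\mathscr R)+qT_f(r,\omega)-\sum_j\overline N_f(r,D_j)-\sum_j N_f(r,D_j).$$
Here $T_f(r,-{\rm Ric}\Psi)\geq0$ by Lemma \ref{grii}(a). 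We then rebalance through the First Main Theorem (Theorem \ref{first}) and the terms $m_f(r,D_j)$ arising from $-2\log u_{D_j}$ and $2u_{D_j}$. Since $\log u_{D_j}$ grows sublinearly compared with $u_{D_j}$, the combination $\sum_j(2u_{D_j}-2\log u_{D_j})\circ f$ controls $\sum_j m_f(r,D_j)$ up to $O(1)$. Tracking signs reduces everything to
$$qT_f(r,\omega)+T_f(r,K_N)+T(r,\mathscr R)-\sum_j\overline N_f(r,D_j)\leq\frac12\int_{\partial\Delta(r)}\log\xi\,d\pi_r+O(1).$$

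The error term is then estimated. By concavity of $\log$ and Jensen's inequality (the harmonic measure $\pi_r$ has total mass $1$),
$$\int_{\partial\Delta(r)}\log\xi\,d\pi_r\leq\frac{1}{n}\log\int_{\partial\Delta(r)}\xi^{1/n}d\pi_r.$$
By Lemma \ref{grii}(b) and the arithmetic--geometric mean inequality on the eigenvalues,
$$\xi^{1/n}\leq C\,\frac{f^*(-{\rm Ric}\Psi)\wedge\alpha^{m-1}}{\alpha^m}=:C\,k,$$
with $k\geq0$ locally integrable and bounded near $o$ (assuming $f(o)\notin D$). Corollary \ref{calculus1} then gives, outside a set of finite measure,
$$\log^+\int_{\partial\Delta(r)}k\,d\pi_r\leq(1+\delta)^2\log^+T_f(r,-{\rm Ric}\Psi)+O(\delta\log r).$$
It remains to bound $\log^+T_f(r,-{\rm Ric}\Psi)$ by $O(\log^+T_f(r,\omega))$. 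For this we apply the Calculus Lemma a second time, this time to the energy density $e_{f,\omega}$, as in Griffiths' argument. Writing $-{\rm Ric}\Psi$ in the form $\omega+2dd^c\phi$, with $\phi$ built from $\log u_{D_j}$, Green--Dynkin converts $T_f(r,-{\rm Ric}\Psi)$ into $qT_f(r,\omega)$ plus boundary integrals of $\log u_{D_j}\circ f$. These are $O(\log^+ m_f)$ and hence $O(\log^+T_f(r,\omega))$ by the First Main Theorem.

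The main obstacle is this last step: showing $T_f(r,-{\rm Ric}\Psi)\leq O(T_f(r,\omega))+O(\log r)$ with only $\delta\log r$-type losses, and not $\log H(r,\delta)$. The point is that Corollary \ref{calculus1} already packages the factor $(V(r)/r)^\delta$ as $O(\delta\log r)$, using $cr\leq V(r)\leq\omega_{2m}r^{2m}$ from Theorem \ref{volume1} and Corollary \ref{volume}. Combining the estimates and absorbing constants yields the stated inequality outside $E_\delta$.
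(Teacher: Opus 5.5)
Your skeleton matches the paper's: Green--Dynkin for $\log\xi$, the First Main Theorem, Jensen, the pointwise bound $\xi^{1/n}\le C\,k$, Corollary \ref{calculus1}, and finally $T_f(r,-{\rm Ric}\Psi)\le O(T_f(r,\omega))$. \textbf{The gap is in the first half, which should produce (\ref{gjj}).}

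As currents, $dd^c[\log\xi]=f^*(-{\rm Ric}\Psi)+\mathscr R-f^*[D]+[D_{f,\rm ram}]$, with $-f^*[D]+[D_{f,\rm ram}]\ge-[f^{-1}(D)]$. Your display has two errors in this step:
\begin{enumerate}
\item It counts $-f^*{\rm Ric}(\omega^n)$ twice, since that term is already inside $-{\rm Ric}\Psi$.
\item It gives the divisor term the wrong sign.
\end{enumerate}
In the integrated inequality, $T_f(r,-{\rm Ric}\Psi)$ is the main term; you cannot drop it as $\ge0$. Applying Green--Dynkin to $\log(u_{D_j}\circ f)$ gives
$$T_f(r,-{\rm Ric}\Psi)=qT_f(r,\omega)+T_f(r,K_N)-\sum_{j=1}^q\int_{\partial\Delta(r)}\log(u_{D_j}\circ f)\,d\pi_r+O(1).$$
Your other split, $\log\xi=\log\xi_0+\sum_j(2u_{D_j}-2\log u_{D_j})\circ f$ with $\xi_0=f^*\omega^n\wedge\alpha^{m-n}/\alpha^m$, leads to the same expression via the First Main Theorem. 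Use one decomposition, not both.

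Either way, you must bound $\int_{\partial\Delta(r)}\log(u_{D_j}\circ f)\,d\pi_r$ from above. Your claim that this costs only $O(1)$ is false. Take $f=e^{z_1}$ on $\mathbb C^m$ with $m\ge2$ and $D_j=\{0\}$. Then $\pi_r$ is normalized area on a sphere of radius $\asymp r$, and $u_{D_j}\circ f\approx\max(-{\rm Re}\,z_1,0)+C$, so the integral grows like $\log r$.

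The ``sublinearity'' argument cannot rescue this. Pointwise it only gives $2u-2\log u\ge(2-\eta)u-C_\eta$. That loses a positive multiple of $\sum_j m_f(r,D_j)$, which can be of the order of $T_f(r,\omega)$. The missing step is Jensen for the probability measure $\pi_r$, followed by the First Main Theorem:
$$\int_{\partial\Delta(r)}\log(u_{D_j}\circ f)\,d\pi_r\le\log m_f(r,D_j)\le\log^+T_f(r,\omega)+O(1).$$
This gives (\ref{gjj}) with a loss of $-q\log^+T_f(r,\omega)$, which the theorem's error term absorbs.

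\textbf{The second half is fine apart from one slip.} Jensen gives $\int\log\xi\,d\pi_r\le n\log\int\xi^{1/n}d\pi_r$, with factor $n$, not $\frac1n$.

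No second use of the Calculus Lemma is needed. The displayed identity, together with a positive lower bound for $u_{D_j}$ (the normalization under which $\Psi$ is finite off $D$), already gives $T_f(r,-{\rm Ric}\Psi)\le O(T_f(r,\omega))+O(1)$. This integrated argument is in fact the right justification for the paper's one-line claim. Near $D$, $-{\rm Ric}\Psi$ contains the Poincar\'e-type term $2\,du_{D_j}\wedge d^cu_{D_j}/u_{D_j}^2$, so it is not pointwise dominated by $\omega$.

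Finally, the $\delta\log r$ improvement comes entirely from Corollary \ref{calculus1}, via $d\pi_r=\frac{Ar}{2V(r)}d\sigma_r$ and $cr\le V(r)\le O(r^{2m})$. It does not come from this last step, so the ``main obstacle'' you identify is not where the difficulty lies.
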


\begin{proof}   
Set   $D=D_1+\cdots+D_q$ and define a function $\xi\geq0$ by
$$f^*\Psi\wedge\alpha^{m-n}=\xi\alpha^m.$$
By  Poincar\'e-Lelong formula and  (\ref{888}), we obtain   
 \begin{eqnarray*}
&&  dd^c\left[\log\xi\right] \\
&=& -f^*{\rm Ric}(\omega^n)+\mathscr R+[D_{f,\rm ram}]+2\sum_{j=1}^q f^*dd^c[u_{D_j}] \\
&& -2\sum_{j=1}^q f^*dd^c[\log u_{D_j}] \\
&=& qf^*\omega-f^*{\rm Ric}(\omega^n)+\mathscr R-f^*[D]+[D_{f,\rm ram}] 
 -2\sum_{j=1}^q f^*dd^c[\log u_{D_j}]
 \end{eqnarray*}
 in the sense of currents. 
In further,  using  Lemma \ref{dynkin} to  get 
 \begin{eqnarray*}
&& \frac{1}{2}\int_{\partial \Delta(r)}\log \xi d\pi_r \\
&=&\frac{1}{4}\int_{\Delta(r)}g_r(o,x)\Delta\log\xi dv +O(1) \nonumber \\
 &\geq& qT_f(r,\omega)+T_f(r, K_N)+T(r,\mathscr R)-N_f(r, D)+N(r, D_{f, \rm ram})   \nonumber \\
 && -\frac{1}{2}\sum_{j=1}^q\int_{\Delta(r)}g_r(o,x)\Delta \log(u_{D_j}\circ f) dv+O(1)  \nonumber \\
&\geq& qT_f(r,\omega)+T_f(r, K_N)+T(r,\mathscr R)-\overline{N}_f(r, D) \nonumber \\
&& -\sum_{j=1}^q\log \int_{\partial \Delta(r)}u_{D_j}\circ f d\pi_r +O(1) \nonumber \\
&=& qT_f(r,\omega)+T_f(r, K_N)+T(r,\mathscr R)-\overline{N}_f(r, D)  \\
&& -\sum_{j=1}^q\log m_f(r,D_j) +O(1).
 \end{eqnarray*}
Hence,  we conclude  from the  First Main Theorem that 
 \begin{eqnarray}\label{gjj}
 \frac{1}{2}\int_{\partial \Delta(r)}\log \xi d\pi_r  
&\geq& qT_f(r,\omega)+T_f(r, K_N)+T(r,\mathscr R)-\overline{N}_f(r, D)   \\
&& -q\log^+T_f(r,\omega)+O(1).  \nonumber 
 \end{eqnarray}
 Write
$$-f^*{\rm Ric}\Psi=\frac{\sqrt{-1}}{\pi}\sum_{i,j=1}^m\psi_{i\bar j}dz_i\wedge d\bar z_j.$$
For any $x_0\in M,$ one can take a   normal local holomorphic coordinate system $z_1,\cdots, z_m$  around  $x_0$ such that  
$$\alpha|_{x_0}=\frac{\sqrt{-1}}{\pi}\sum_{j=1}^m dz_j\wedge d\bar z_j.$$
By Lemma \ref{grii}
 \begin{eqnarray*}
f^*\Psi\wedge\alpha^{m-n}\big|_{x_0}&\leq& c^{-1}\left(-f^*{\rm Ric\Psi}\right)^n\wedge\alpha^{m-n} \\
&=& \bigg(\frac{\sqrt{-1}}{\pi}\sum_{i,j=1}^m\psi_{i\bar j}dz_i\wedge d\bar z_j\bigg)^{n}\bigwedge \bigg( \frac{\sqrt{-1}}{\pi}\sum_{j=1}^m dz_j\wedge d\bar z_j\bigg)^{m-n} \\
&=& c^{-1}(m-n)!\sum_{1\leq i_1\not=\cdots\not=i_n\leq m}\psi_{i_1\bar i_1}\cdots\psi_{i_n\bar i_n}\alpha^m \\
&\leq& c^{-1}(m-n)! \left({\rm tr}(\psi_{i\bar j})\right)^n\alpha^m.
 \end{eqnarray*}
On the other hand, we have 
\begin{equation*}
-f^*{\rm Ric\Psi}\wedge\alpha^{m-1}\big|_{x_0}=(m-1)!{\rm tr}(\psi_{i\bar j})\alpha^{m},
\end{equation*}
which  leads to 
 \begin{eqnarray*}
\xi^{\frac{1}{n}}\big|_{x_0}&=&\left(\frac{f^*\Psi\wedge\alpha^{m-n}}{\alpha^m}\right)^{\frac{1}{n}} \\
&\leq& c^{-\frac{1}{n}}(m-n)!^{\frac{1}{n}} {\rm tr}(\psi_{i\bar j}) \\
&=&-\frac{c^{-\frac{1}{n}}(m-n)!^{\frac{1}{n}}}{(m-1)!}\frac{f^*{\rm Ric\Psi}\wedge\alpha^{m-1}}{\alpha^m}.
 \end{eqnarray*}
Using  the arbitrariness  of $x_0,$ we obtain  
\begin{equation}\label{oppo}
\xi^{\frac{1}{n}}\leq -\frac{c^{-\frac{1}{n}}(m-n)!^{\frac{1}{n}}}{(m-1)!}\frac{f^*{\rm Ric\Psi}\wedge\alpha^{m-1}}{\alpha^m}.
\end{equation}
Since $\pi_r$ is a probability measure,   applying   Jensen's inequality and Corollary \ref{calculus1}    to get  
  \begin{eqnarray*}
 \int_{\partial \Delta(r)}\log \xi d\pi_r 
&\leq&  n\log\int_{\partial \Delta(r)}\xi^{\frac{1}{n}} d\pi_r \\
&\leq& n(1+\delta)^2\log \int_{\Delta(r)}g_r(o,x)\xi^{\frac{1}{n}}dv+O\big(\delta\log r\big) \\
&\leq& n(1+\delta)^2\log \int_{\Delta(r)}g_r(o,x)\frac{-f^*{\rm Ric\Psi}\wedge\alpha^{m-1}}{\alpha^m}dv+O\big(\delta\log r\big) \\
&=& n(1+\delta)^2\log T_f(r, -{\rm Ric}\Psi)+O\big(\delta\log r\big).
 \end{eqnarray*}
Since $N$ is compact and  $\omega>0,$ we have  
$$T_f(r, -{\rm Ric}\Psi)\leq O\big(T_f(r, \omega)\big).$$
This implies that 
 $$ \int_{\partial \Delta(r)}\log \xi d\pi_r \leq O\big(\log T_f(r, \omega)+\delta\log r\big). $$
By this with (\ref{gjj}), we prove the theorem.
\end{proof}

  \section{Carlson-Griffiths Theory for Parabolic Complete K\"ahler  Manifolds with Nonnegative Ricci Curvature}

In this section,  we use   $(M, g)$ to denote   a   parabolic complete noncompact  K\"ahler manifold  with nonnegative Ricci curvature,
  of complex dimension $m.$  Like before,   denote by $\nabla$    the gradient operator on  $M,$ and 
by $\alpha$     the   K\"ahler form of $M$ associated to $g$   defined by 
 $$\alpha=\frac{\sqrt{-1}}{\pi}\sum_{i,j=1}^mg_{i\bar j}dz_i\wedge d\bar z_{j}$$
in a local holomorphic coordinate system $z_1,\cdots,z_m.$
  
    \subsection{Construction of $\Delta(r)$}~\label{sec41}

Fix a reference point $o\in M.$  From   Theorem \ref{ppttt}, for any $0<\epsilon<1,$ there exist two constants $c_1,c_2>0$ such that   
\begin{equation}\label{eett}
\frac{c_1}{2V(\sqrt{t})}e^{-\frac{\rho(x)^2}{4(1-\epsilon)t}}\leq p(t,o,x)\leq \frac{c_2}{2V(\sqrt{t})}e^{-\frac{\rho(x)^2}{4(1+\epsilon)t}}
\end{equation}
holds for all $x\in M$ and all $t>0,$
in which $V(r)=V_o(r)$ and $\rho(x)=\rho(o,x).$

Define 
$$G_r(o,x)=2\int_0^rp(t,o,x)dt.$$
Using the properties of $p(t,o,x),$
we have for  $r>0$  
\begin{equation}\label{pro}
\lim_{\rho(x)\to 0} G_r(o,x)=\infty, \ \ \ \    \lim_{\rho(x)\to \infty} G_r(o,x)=0.
\end{equation}
For $r>0,$ define 
$$\Delta(r)=\left\{x\in M: \     G_r(o,x)>c_1\int_0^r\frac{1}{V(\sqrt{t})}e^{-\frac{r^2}{4(1-\epsilon)t}}dt\right\}.$$
With the help  of   (\ref{pro}),  we  deduce  that $\Delta(r)$ is a precompact domain containing $o$ in $M,$  such that  $\overline{\Delta(r_1)}\subseteq\Delta(r_2)$ whenever  $r_1<r_2$ and that 
$$\lim_{r\to 0}\Delta(r)=\emptyset \ \ \ \    \lim_{r\to \infty} \Delta(r)=M.$$ 
Hence,  the family $\{\Delta(r)\}_{r>0}$ exhausts $M.$ The boundary $\partial\Delta(r)$ of $\Delta(r)$ can be described as
$$\partial\Delta(r)=\left\{x\in M: \     G_r(o,x)=c_1\int_0^r\frac{1}{V(\sqrt{t})}e^{-\frac{r^2}{4(1-\epsilon)t}}dt\right\}.$$
 By Sard's theorem,  $\partial\Delta(r)$ is a smooth submanifold of $M$  for almost all $r>0.$

Set 
$$h_r(o,x)=G_r(o,x)-c_1\int_0^r\frac{1}{V(\sqrt{t})}e^{-\frac{r^2}{4(1-\epsilon)t}}dt.$$
Note that $h_r(o,x)$ has the following properties:   
     $$\begin{cases}
 -\frac{1}{2}\Delta h_r(o,x)=\delta_o(x)-p(r,o,x),  \ \ &    x\in\Delta(r); \\
 h_r(o,x)>0, \ \  &  x\in\Delta(r);  \\
  h_r(o,x)=0, \ \  &  x\in\partial\Delta(r),
\end{cases}$$
 where $\delta_o$ is the Dirac's delta  function with a pole at $o.$ 
It is  clear that  $h_r(o,x)$ is not the Green function for $\Delta(r).$
 Define the  positive  measure $\Theta_r$  on $\partial\Delta(r)$ with respect to $o$  by
   $$d\Theta_r=\frac{1}{2}\frac{\partial h_r(o,x)}{\partial{\vec{\nu}}}d\sigma_r,$$
     where  $\partial/\partial \vec\nu$ is the inward  normal derivative on $\partial \Delta(r),$ and $d\sigma_{r}$ is the induced Riemannian area element of 
$\partial \Delta(r).$ Note  that  $\Theta_r$ is not a harmonic measure on  $\partial\Delta(r),$ and also not  a probabilistic measure on  $\partial\Delta(r)$ due to 
$$\int_{\partial\Delta(r)}d\Theta=1-\int_{\Delta(r)}p(r,o,x)dv<1,$$
which can be   obtained  by Green's first or second identity (see Corollary \ref{dynkin2} later).

\begin{theorem}\label{beta}  For any  $0<\epsilon<1,$ we have 
$$r\leq\min_{x\in\partial\Delta(r)}\rho(x) \leq \max_{x\in\partial\Delta(r)} \rho(x)\leq  \beta r$$
holds for all sufficiently large   $r>0,$ where $\beta=(1+\epsilon)/(1-\epsilon).$  
\end{theorem}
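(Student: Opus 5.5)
The plan is to compare the defining level $a(r):=c_1\int_0^r V(\sqrt t)^{-1}e^{-\frac{r^2}{4(1-\epsilon)t}}dt$ with the values of $G_r(o,x)$ furnished by the two-sided Li-Yau estimate (\ref{eett}). Recall that $\partial\Delta(r)=\{x: G_r(o,x)=a(r)\}$. It therefore suffices to show two things: $G_r(o,x)>a(r)$ whenever $\rho(x)<r$, and $G_r(o,x)<a(r)$ whenever $\rho(x)>\beta r$ and $r$ is large. Then no boundary point can have $\rho(x)<r$ or $\rho(x)>\beta r$, which is the statement.

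\emph{Lower bound.} Take $x$ with $\rho(x)<r$. By the left inequality of (\ref{eett}),
$$G_r(o,x)=2\int_0^r p(t,o,x)dt\geq c_1\int_0^r\frac{1}{V(\sqrt t)}e^{-\frac{\rho(x)^2}{4(1-\epsilon)t}}dt.$$
For every $t\in(0,r)$ we have $e^{-\rho(x)^2/(4(1-\epsilon)t)}>e^{-r^2/(4(1-\epsilon)t)}$, so the integrand strictly exceeds the one defining $a(r)$, and hence $G_r(o,x)>a(r)$. Thus $x\in\Delta(r)$ and $x\notin\partial\Delta(r)$. This gives $\min_{\partial\Delta(r)}\rho\geq r$ for every $r>0$.

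\emph{Upper bound.} Take $x$ with $\rho(x)>\beta r$. The right inequality of (\ref{eett}) gives
$$G_r(o,x)\leq c_2\int_0^r\frac{1}{V(\sqrt t)}e^{-\frac{\beta^2r^2}{4(1+\epsilon)t}}dt .$$
Since $\beta^2/(1+\epsilon)=\beta/(1-\epsilon)$, the exponent can be split as
$$\frac{\beta^2r^2}{4(1+\epsilon)t}=\frac{r^2}{4(1-\epsilon)t}+\frac{(\beta-1)r^2}{4(1-\epsilon)t}.$$
For $0<t\leq r$ the second term is at least $(\beta-1)r/(4(1-\epsilon))$. Hence
$$G_r(o,x)\leq \frac{c_2}{c_1}e^{-\frac{(\beta-1)r}{4(1-\epsilon)}}\,a(r).$$
Because $\beta>1$, the prefactor tends to $0$. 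Choose $r_0$ with $c_2e^{-(\beta-1)r_0/(4(1-\epsilon))}<c_1$. Then for $r\geq r_0$ we get $G_r(o,x)<a(r)$, so $x\notin\overline{\Delta(r)}$, and therefore $\max_{\partial\Delta(r)}\rho\leq\beta r$.

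The only delicate step is the upper bound. The mismatch between the exponents $1/(1+\epsilon)$ and $1/(1-\epsilon)$, together with the ratio $c_2/c_1$ of constants, must be absorbed. Enlarging the radius by the factor $\beta$ turns this mismatch into an extra exponential decay factor. That factor is uniform in $t\in(0,r]$ precisely because $r^2/t\geq r$, and it beats $c_2/c_1$ only for large $r$, which is why the statement is asserted only for sufficiently large $r$.
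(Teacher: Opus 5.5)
Your proof is correct and follows the paper's argument: the lower bound $B(r)\subseteq\Delta(r)$ comes from the lower Li--Yau estimate, and the upper bound comes from the upper estimate plus an exponent split that produces a factor $e^{-cr}$, which absorbs $c_2/c_1$ for large $r$. The differences are cosmetic: you argue directly rather than by contradiction along sequences $x_n\in\partial\Delta(r_n)$, and your identity $\beta^2/(1+\epsilon)=\beta/(1-\epsilon)$ gives a slightly sharper decay rate $(\beta-1)/(4(1-\epsilon))$ than the paper's $\epsilon/(4(1-\epsilon))$.
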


\begin{proof}  Combining  the definition of $\Delta(r)$ with the lower estimate of $p(t,o,x)$ in  (\ref{eett}),  it is not hard  to deduce that $B(r)\subseteq \Delta(r)$ which implies  
$$r\leq\min_{x\in\partial\Delta(r)}\rho(x)$$
for all $r>0.$
Conversely,  assume that the last  inequality given  in the theorem is false.  There are sequences $\{x_n\}, \{r_n\}$ with $x_n\in\partial\Delta(r_n)$ 
and $0<r_n\to\infty$ as $n\to\infty,$ such that
\begin{equation}\label{ccoo}
 \rho(x_n)>  \frac{1+\epsilon}{1-\epsilon}r_n, \ \ \ \    n=1,2,\cdots
 \end{equation}
 Employing  the definition of $\partial\Delta(r)$ and the  upper estimate of $p(t,o,x)$ in  (\ref{eett}), we can arrive at  
$$\int_0^{r_n}\frac{1}{V(\sqrt{t})}e^{-\frac{r_{n}^2}{4(1-\epsilon)t}}dt\leq \frac{c_2}{c_1}\int_0^{r_n}\frac{1}{V(\sqrt{t})}e^{-\frac{\rho(x_n)^2}{4(1+\epsilon)t}}dt, \ \ \ \     n=1,2,\cdots$$
Applying    (\ref{ccoo}),   the difference between the  right hand side (RHS) and  the left hand side (LHS) of the above inequality satisfies that     for $n=1,2,\cdots$
   \begin{eqnarray*}
0\leq {\rm{RHS-LHS}} 
&<& \int_0^{r_n}\frac{1}{V(\sqrt{t})}\left(\frac{c_2}{c_1}e^{-\frac{(1+\epsilon)r_n^2}{4(1-\epsilon)^2t}}-e^{-\frac{r_n^2}{4(1-\epsilon)t}}\right)dt \\
&\leq& \int_0^{r_n}\frac{1}{V(\sqrt{t})}\left(\frac{c_2}{c_1}e^{-\frac{(1+\epsilon)r_n^2}{4(1-\epsilon)t}}-e^{-\frac{r_n^2}{4(1-\epsilon)t}}\right)dt. 
   \end{eqnarray*}
Write  
$$\frac{c_2}{c_1}e^{-\frac{(1+\epsilon)r_n^2}{4(1-\epsilon)t}}-e^{-\frac{r_n^2}{4(1-\epsilon)t}} 
= e^{-\frac{r_n^2}{4(1-\epsilon)t}}\left(e^{-{\frac{\epsilon r_n^2}{4(1-\epsilon)t}}+\log\frac{c_2}{c_1}}-1\right).$$
Since $t\leq r_n$ and $r_n\to\infty$ as $n\to\infty,$ when $n$ is sufficiently large 
$$-{\frac{\epsilon r_n^2}{4(1-\epsilon)t}}+\log\frac{c_2}{c_1} \leq -{\frac{\epsilon r_n}{4(1-\epsilon)}}+\log\frac{c_2}{c_1} 
<0,$$
which implies that
$$e^{-{\frac{\epsilon r_n^2}{4(1-\epsilon)t}}+\log\frac{c_2}{c_1}}-1<0.$$
Thus,   we deduce that  
$0\leq{\rm{RHS-LHS}}<0$ for $n$ large enough. 
but which is  a contradiction. This completes the proof. 
\end{proof}

\begin{cor}\label{domain}   For any  $0<\epsilon<1,$ we have  
$$B(r)\subseteq\Delta(r)\subseteq B(\beta r)$$
holds for all sufficiently large   $r>0,$ where $\beta=(1+\epsilon)/(1-\epsilon).$ 
\end{cor}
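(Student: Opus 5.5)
This is a direct consequence of Theorem \ref{beta}, so the plan is simply to translate the distance bounds on the boundary $\partial\Delta(r)$ into inclusions of the domains. Two ingredients are needed: the description of $\Delta(r)$ as a precompact domain containing $o$ with boundary the level set $\{h_r(o,\cdot)=0\}$, and the fact that geodesic balls are connected.

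\emph{First inclusion.} I would argue directly from the lower Gaussian estimate in (\ref{eett}). For $\rho(x)<r$ and $0<t\le r$ we have
$$e^{-\frac{\rho(x)^2}{4(1-\epsilon)t}}> e^{-\frac{r^2}{4(1-\epsilon)t}}.$$
Integrating $2p(t,o,x)$ over $(0,r)$ then gives
$$G_r(o,x)> c_1\int_0^r\frac{1}{V(\sqrt t)}e^{-\frac{r^2}{4(1-\epsilon)t}}dt,$$
which means $x\in\Delta(r)$. Hence $B(r)\subseteq\Delta(r)$ for every $r>0$. This is the same observation that yields the left inequality in Theorem \ref{beta}.

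\emph{Second inclusion.} Fix $r$ large enough that Theorem \ref{beta} applies, so that $\partial\Delta(r)\subseteq\overline{B(\beta r)}$. Suppose some $y\in\Delta(r)$ had $\rho(y)\ge\beta r$. Since $\Delta(r)$ is precompact, there is a point $z\notin\overline{\Delta(r)}$, and we may take $\rho(z)$ as large as we like, in particular larger than $\beta r$. I would first try to join $y$ to $z$ by a path that stays in the region $\{\rho>\beta r\}$ apart from possibly its starting point. Such a path leaves $\Delta(r)$, so it must meet $\partial\Delta(r)$ at a point $w$. If $\rho(w)>\beta r$, this contradicts Theorem \ref{beta}.

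The main obstacle is that the region $\{\rho>\beta r\}$ need not be connected, so such a path may not exist. To avoid this, I would not use paths at all and instead argue by connectedness of $\Delta(r)$. The set $\Delta(r)\cap B(\beta r)$ is nonempty, since it contains $o$. It is relatively closed in $\Delta(r)$: any boundary point of this set inside $\Delta(r)$ would have $\rho=\beta r$, and I need to rule that out. If $\Delta(r)$ contained a point with $\rho=\beta r$ or with $\rho>\beta r$, I would take a point $p\in\overline{\Delta(r)}$ maximizing $\rho$ over this compact set. Such a $p$ cannot lie in the open set $\Delta(r)$, because $\rho$ has no local maximum on $M$: a minimizing geodesic from $o$ to $p$ can be extended past $p$, and on a complete noncompact manifold $\rho$ is unbounded near every point of any maximal-distance set. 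So $p\in\partial\Delta(r)$, with $\rho(p)=\max_{\overline{\Delta(r)}}\rho\ge\rho(y)\ge\beta r$. Theorem \ref{beta} gives $\rho(p)\le\beta r$, hence $\rho(p)=\beta r$ exactly. The strictness at the endpoint is absorbed by a limiting step: Theorem \ref{beta} holds for every $0<\epsilon<1$, so we may replace $\epsilon$ by a slightly smaller value, or simply read the inclusion with the closed ball $\overline{B(\beta r)}$, which is what the later arguments will use.

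In practice, then, the proof amounts to: the lower heat-kernel estimate gives $B(r)\subseteq\Delta(r)$; and the maximum of $\rho$ over $\overline{\Delta(r)}$ is attained on $\partial\Delta(r)$ (since $\rho$ has no interior local maxima), where Theorem \ref{beta} bounds it by $\beta r$.
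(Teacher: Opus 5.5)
Your first inclusion is correct and is exactly the observation made in the proof of Theorem~\ref{beta}. The gap is in the second inclusion, at the step ``such a $p$ cannot lie in the open set $\Delta(r)$, because $\rho$ has no local maximum on $M$.''

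Your justification for this does not work. Extending a minimizing geodesic past $p$ increases $\rho$ only if $p$ is not a cut point of $o$; past a cut point the extension stops minimizing, and $\rho$ may decrease along it. Also, $\rho$ is $1$-Lipschitz, so it is not ``unbounded near'' any point.

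Worse, the claim is false for general complete noncompact manifolds, and your argument uses neither $\mathrm{Ric}\geq 0$ nor the K\"ahler structure. Take the flat open M\"obius band $\mathbb R^2/\langle (x,y)\mapsto(x+1,-y)\rangle$, which is complete, noncompact, with $\mathrm{Ric}\equiv 0$ (though non-orientable), and let $o=[(0,b)]$ with $b>1/2$. The point $[(1,1/(4b))]$ is equidistant from the orbit points $(0,b)$, $(2,b)$, $(1,-b)$ and lies inside their triangle, so it is a strict local maximum of $\rho$. A round sphere joined to a plane by a thin neck gives an orientable example.

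Without a proof that this cannot happen under the present hypotheses, knowing $\partial\Delta(r)\subseteq\overline{B(\beta r)}$ does not rule out $\Delta(r)$ containing a relatively compact component of $M\setminus\overline{B(\beta r)}$. Your endpoint fix, replacing $\epsilon$ by a smaller value, is also not legitimate as stated, because $\Delta(r)$ itself depends on $\epsilon$ through $c_1$ and the factor $1-\epsilon$.

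The detour through the boundary is unnecessary. The contradiction argument in the proof of Theorem~\ref{beta} uses, at the point in question, only the inequality $G_r(o,x)\geq c_1\int_0^r V(\sqrt t)^{-1}e^{-r^2/(4(1-\epsilon)t)}\,dt$. That inequality holds on all of $\overline{\Delta(r)}$, not just on $\partial\Delta(r)$. Concretely, if $\rho(x)\geq\beta r$ and $0<t\leq r$, then
\[
c_2\,e^{-\frac{\rho(x)^2}{4(1+\epsilon)t}}\leq c_2\,e^{-\frac{\epsilon r}{2(1-\epsilon)^2}}\,e^{-\frac{r^2}{4(1-\epsilon)t}}<c_1\,e^{-\frac{r^2}{4(1-\epsilon)t}}
\]
once $r$ exceeds a threshold depending only on $\epsilon$, $c_1$, $c_2$. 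Integrate against $dt/V(\sqrt t)$ and use the Gaussian upper bound for $p(t,o,x)$. This gives $G_r(o,x)<c_1\int_0^r V(\sqrt t)^{-1}e^{-r^2/(4(1-\epsilon)t)}\,dt$, so $x\notin\overline{\Delta(r)}$. Hence $\overline{\Delta(r)}\subseteq B(\beta r)$ with the open ball, uniformly in $x$, with no endpoint issue and no connectedness or maximum-principle argument for $\rho$.
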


\subsection{Nevanlinna's Functions and First Main Theorem}~\label{sec42}

    We   introduce  Nevanlinna's functions in the parabolic setting. 
  Let $(N, \omega)$ be a compact K\"ahler manifold, and  $D$  an effective devisor  cohomologous to $\omega$ on $N,$ i.e.,  there exists a function 
$u_D\geq0$ on $N$ such that 
\begin{equation*}\label{}
\omega-[D]=2dd^c[u_D]
\end{equation*}
in the sense of currents.     Let $f: M\to N$ be a meromoprhic  mapping. Denote by $e_{f,\omega}$  the energy density function of $f$ with respect to metrics $\alpha, \omega,$ defined as in Section \ref{sec32}. 
Similarly, the \emph{characteristic function, proximity function, counting function} and \emph{simple counting function} of $f$ are  defined  respectively as  
   \begin{eqnarray*}
T_f(r, \omega)&=& \frac{1}{2}\int_{\Delta(r)}h_r(o,x)e_{f,\omega}dv, \\
m_f(r,D)&=&\int_{\partial\Delta(r)}u_D\circ fd\Theta_r, \\
N_f(r,D)&=& \frac{\pi^m}{(m-1)!}\int_{f^*D\cap\Delta(r)}h_r(o,x)\alpha^{m-1}, \\
\overline{N}_f(r, D)&=& \frac{\pi^m}{(m-1)!}\int_{f^{-1}(D)\cap\Delta(r)}h_r(o,x)\alpha^{m-1}. 
 \end{eqnarray*}
 
 \begin{remark} We make a comparison of  the  Nevanlinna's functions in these two settings (non-parabolic and parabolic settings). 
 Let $g_r(o,x)$ denote    the  Green function of $\Delta/2$ for $\Delta(r)$ with a pole at $o$ satisfying Dirichlet boundary condition. 
   Let  $\pi_r$ denote   the  harmonic measure on $\partial\Delta(r)$ with respect to $o.$
In the non-parabolic setting,   the characteristic function, proximity function, counting function and simple counting function of $f$ are  defined  respectively 
as  (to distinguish, we mark a star in the upper right corner of the notations)
   \begin{eqnarray*}
T^*_f(r, \omega)&=& \frac{1}{2}\int_{\Delta(r)}g_r(o,x)e_{f,\omega}dv, \\
m^*_f(r,D)&=&\int_{\partial\Delta(r)}u_D\circ fd\pi_r, \\
N^*_f(r,D)&=& \frac{\pi^m}{(m-1)!}\int_{f^*D\cap\Delta(r)}g_r(o,x)\alpha^{m-1}, \\
\overline{N}^*_f(r, D)&=& \frac{\pi^m}{(m-1)!}\int_{f^{-1}(D)\cap\Delta(r)}g_r(o,x)\alpha^{m-1}. 
 \end{eqnarray*}
 Since  
      $$\begin{cases}
 \Delta\big(h_r(o,x)-g_r(o,x)\big)=2p(r,o,x)>0,  \ \ &    x\in\Delta(r); \\
  h_r(o,x)-g_r(o,x)=0, \ \  &  x\in\partial\Delta(r),
\end{cases}$$
 we see that $h_r(o,x)-g_r(o,x)$ is  subharmonic  on $\Delta(r)$ and identically zero  on $\partial\Delta(r).$
 Using the maximum modulus principle, $h_r(o,x)-g_r(o,x)$ can attain its maximum only on $\partial\Delta(r).$ Thus , we deduce that  $h_r(o,x)\leq g_r(o,x).$
 From the definitions of $d\pi_r, d\Theta_r,$ we further obtain $d\pi_r\leq d\Theta_r.$ Hence, we have 
    \begin{eqnarray*}
T_f(r, \omega)&\leq& T^*_f(r, \omega), \ \ \ \  \ \ \     \ \  m_f(r,D)\leq m^*_f(r,D),    \\
N_f(r,D)&\leq& N^*_f(r,D), \ \ \ \  \ \  \  \  \overline{N}_f(r, D)\leq\overline{N}^*_f(r, D).
 \end{eqnarray*}
 \end{remark}
 
\begin{lemma}\label{dynkin1} Let $\phi$ be a $\mathscr C^2$ function on  $M$ outside a polar set of singularities at most. Assume that $\phi(o)\not=\infty.$  Then
$$\int_{\partial \Delta(r)}\phi d\Theta_{r}-\phi(o)=\frac{1}{2}\int_{\Delta(r)}h_r(o,x)\Delta \phi dv-\int_{\Delta(r)}p(r,o,x)\phi dv.$$
\end{lemma}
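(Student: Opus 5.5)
I will follow the analytic proof of Lemma \ref{dynkin}, with the Green function $g_r(o,x)$ replaced by $h_r(o,x)$. The only new ingredient is that $h_r(o,\cdot)$ is no longer harmonic away from $o$. Instead it satisfies
$$-\tfrac12\Delta h_r(o,x)=\delta_o(x)-p(r,o,x)\quad\text{on }\Delta(r).$$
I would first verify this equation, since the whole correction term comes from it. Since $G_r(o,x)=2\int_0^r p(t,o,x)\,dt$ and $p$ solves the heat equation $\Delta_x p=\partial_t p$, for $x\neq o$ we get
$$\Delta_x G_r(o,x)=2\int_0^r\partial_tp\,dt=2p(r,o,x)-2\lim_{t\to0}p(t,o,x)=2p(r,o,x).$$
The limit vanishes because $p(t,o,\cdot)\to\delta_o$ as $t\to0$. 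At $o$ this same limit produces the Dirac term $-2\delta_o$ in the distributional sense. The subtracted constant does not affect the Laplacian. Hence $\Delta h_r=2p(r,o,\cdot)-2\delta_o$ as distributions on $\Delta(r)$.

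Next I apply Green's second identity on $\Delta(r)$, at a regular value $r$ where $\partial\Delta(r)$ is smooth by Sard's theorem; other $r$ are handled by continuity or a.e. considerations. As in Lemma \ref{dynkin}, the singular set of $\phi$ is polar, so the identity holds in the distributional sense. The identity reads
$$\int_{\Delta(r)}h_r\Delta\phi\,dv-\int_{\Delta(r)}\phi\Delta h_r\,dv=\int_{\partial\Delta(r)}\phi\frac{\partial h_r}{\partial\vec\nu}d\sigma_r-\int_{\partial\Delta(r)}h_r\frac{\partial\phi}{\partial\vec\nu}d\sigma_r .$$
I then evaluate each term. Because $h_r=0$ on $\partial\Delta(r)$, the last boundary integral vanishes. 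By the definition of $\Theta_r$, the first boundary integral equals $2\int_{\partial\Delta(r)}\phi\,d\Theta_r$. By the distributional equation above,
$$\int_{\Delta(r)}\phi\Delta h_r\,dv=-2\phi(o)+2\int_{\Delta(r)}p(r,o,x)\phi\,dv.$$
Substituting and dividing by $2$ gives
$$\int_{\partial\Delta(r)}\phi\,d\Theta_r-\phi(o)=\tfrac12\int_{\Delta(r)}h_r\Delta\phi\,dv-\int_{\Delta(r)}p(r,o,x)\phi\,dv,$$
which is the claim.

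The main obstacle is justifying the identity near the pole $o$ and near the polar singularities of $\phi$. I would excise a small geodesic ball $B_o(\eta)$, together with small neighborhoods of the singularities, and let the radii tend to $0$. Near $o$ I use the local Green-function asymptotics of $G_r$: for small $t$, $p(t,o,x)$ behaves like the Euclidean heat kernel, so $h_r$ has the same singularity as the local Green function. The flux of $\nabla h_r$ through $\partial B_o(\eta)$ therefore tends to $2$, while $\int_{\partial B_o(\eta)}h_r\,\partial_\nu\phi\to0$. This recovers the $-2\phi(o)$ term using only the continuity of $\phi$ at $o$. Near the singularities of $\phi$, I would argue as in Lemma \ref{dynkin}: $\phi$ is locally a difference of subharmonic functions, so $\Delta\phi$ is a signed measure, and polar sets carry no boundary flux contribution in the limit. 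The term $\int p(r,o,x)\phi\,dv$ is finite because $p(r,o,\cdot)$ is smooth and bounded on $\overline{\Delta(r)}$ and $\phi$ is locally integrable.

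Taking $\phi\equiv1$ also gives Corollary \ref{dynkin2}, namely $\Theta_r(\partial\Delta(r))=1-\int_{\Delta(r)}p(r,o,x)\,dv$, which serves as a consistency check on the computation.
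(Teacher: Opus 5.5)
Your proposal is correct and follows essentially the paper's proof: Green's second identity for $h_r(o,\cdot)$ and $\phi$ on $\Delta(r)$, with $\int_{\partial\Delta(r)}h_r\,\frac{\partial\phi}{\partial\vec\nu}\,d\sigma_r=0$, the other boundary term equal to $2\int_{\partial\Delta(r)}\phi\,d\Theta_r$, and the interior term computed from $-\frac12\Delta h_r=\delta_o-p(r,o,\cdot)$. Your derivation of that equation from the heat equation and your excision argument near $o$ supply details the paper only asserts; note that the vanishing of $\int_{\partial B_o(\eta)}h_r\,\partial_\nu\phi$ tacitly needs $\phi$ to be $\mathscr C^1$ near $o$, which holds in the applications.
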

\begin{proof} 
The argument   is  similar to  the proof of Lemma \ref{dynkin}.
 We apply     Green's second identity to   obtain 
  \begin{eqnarray*}
&&\int_{\Delta(r)}h_r(o,x)\Delta \phi dv-\int_{\Delta(r)}\phi\Delta h_r(o,x) dv \\
&=& \int_{\partial\Delta(r)}\phi \frac{\partial h_r(o,x)}{\partial{\vec{\nu}}}d\sigma_r-\int_{\partial\Delta(r)}h_r(o,x) \frac{\partial\phi}{\partial{\vec{\nu}}}d\sigma_r,
  \end{eqnarray*}
  where  $\partial/\partial \vec\nu$ is the inward  normal derivative on $\partial \Delta(r),$ and $d\sigma_{r}$ is the induced Riemannian area element of 
$\partial \Delta(r).$  The properties   of  $h_r(o,x)$ give  that  
$$\int_{\Delta(r)}\phi\Delta h_r(o,x) dv=-2\phi(o)+2\int_{\Delta(r)}p(r,o,x)\phi dv$$
and 
$$\int_{\partial\Delta(r)}h_r(o,x) \frac{\partial\phi}{\partial{\vec{\nu}}}d\sigma_r=0.$$
Again, it yields from  the definition of $\Theta_r$ that  
$$ \int_{\partial\Delta(r)}\phi \frac{\partial h_r(o,x)}{\partial{\vec{\nu}}}d\sigma_r=2\int_{\partial\Delta(r)}\phi d\Theta_r.$$
Put together   the above, we can prove  the lemma.
\end{proof}

When $\phi=1,$    it is immediate  from Lemma \ref{dynkin1}   that   

\begin{cor}\label{dynkin2} We have 
$$\Theta_{r}\big(\partial \Delta(r)\big)=1-\int_{\Delta(r)}p(r,o,x) dv.$$
\end{cor}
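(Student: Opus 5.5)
Apply Lemma \ref{dynkin1} with the constant test function $\phi\equiv 1$. The hypotheses hold trivially: $\phi$ is smooth on all of $M$ with no singularities, and $\phi(o)=1\neq\infty$. Since $\Delta\phi=0$, the term $\frac12\int_{\Delta(r)}h_r(o,x)\Delta\phi\,dv$ vanishes. The left-hand side reduces to $\int_{\partial\Delta(r)}d\Theta_r-1=\Theta_r(\partial\Delta(r))-1$. The last term becomes $-\int_{\Delta(r)}p(r,o,x)\,dv$. Rearranging gives exactly
$$\Theta_r\big(\partial\Delta(r)\big)=1-\int_{\Delta(r)}p(r,o,x)\,dv.$$

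The only point needing care is that Lemma \ref{dynkin1} itself rests on Green's second identity. This requires $\partial\Delta(r)$ to be a smooth hypersurface, which by Sard's theorem holds for almost every $r$. It also requires the distributional identity $-\frac12\Delta h_r(o,x)=\delta_o(x)-p(r,o,x)$ on $\Delta(r)$. That identity follows by differentiating $G_r(o,x)=2\int_0^r p(t,o,x)\,dt$ under the integral and using $\Delta p=\partial_t p$:
$$\Delta G_r=2\int_0^r\partial_t p\,dt=2p(r,o,x)-2\lim_{t\to0}p(t,o,x)=2p(r,o,x)-2\delta_o.$$
The constant subtracted to form $h_r$ has zero Laplacian, so the same identity holds for $h_r$.

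I expect no real obstacle here, since the lemma is taken as given. For $r$ where $\partial\Delta(r)$ fails to be smooth, I would read the identity for almost all $r$, consistent with how $\Theta_r$ is used later. As a sanity remark, since $p(r,o,\cdot)>0$ and $\int_M p(r,o,x)\,dv\le1$, the right-hand side lies in $[0,1)$. This matches the positivity of $\Theta_r$ and the earlier observation that $\Theta_r$ is not a probability measure.
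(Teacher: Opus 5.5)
Your proposal is correct and is exactly the paper's argument: take $\phi\equiv 1$ in Lemma~\ref{dynkin1}. Then $\Delta\phi=0$ removes the $h_r$-term, and rearranging gives the identity. Your extra check that $-\frac{1}{2}\Delta h_r(o,x)=\delta_o(x)-p(r,o,x)$ follows from $\Delta p=\partial_t p$ supplies a justification the paper only asserts, and it is consistent with the paper.
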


The \emph{residual function} of $f$ with respect to $D$ is defined as 
$$E_f(r, D)=\int_{\Delta(r)}p(r,o,x)u_D\circ f dv.$$
We can see from  $\Delta(h_r(o,x)-g_r(o,x))=2p(r,o,x)$ that  the residual function   $E_f(r, D)$  
 arises  from the  residual between $h_r(o,x)$ and $g_r(o,x).$  In fact, using the property that 
  $h_r(o,x)-g_r(o,x)=0$ on $\partial\Delta(r)$ and  the maximum modulus principle, it is immediate that 
 $$h_r(o,x)-g_r(o,x)\not=0 \Longleftrightarrow \Delta\big(h_r(o,x)-g_r(o,x)\big)\not=0.$$
\ \ \  \   In what follows, we give   a probabilistic interpretation  of $E_f(r, D).$  Let $X_t$ be the  Brownian motion with generator $\Delta$ on $M,$ i.e., $X_t$ is a heat diffusion on $M,$ whose  
  transition density function  is  
the minimal positive fundamental solution of the heat equation
$$\Big(\Delta-\frac{\partial}{\partial t}\Big)u(t,x)=0.$$
Thus, in a probabilistic viewpoint, 
the  heat kernel $p(t,x,y)$  is   the  transition density function of $X_t,$ which describes  the probability density of $X_t$ started  at a point $x\in M$ to 
reach another point $y\in M$ after time $t.$
Let $\mathbb E$ 
denote    the expectation of $X_t$ started at $o.$ Then 
$$E_f(r,D)=\mathbb E\big[u_D\circ f(X_r):  X_r\in\Delta(r)\big].$$

    By Lemma \ref{dynkin1} and  the  arguments  in the proof of Theorem \ref{first}, it is trivial  to conclude    that 

\begin{theorem}[First Main Theorem]  Assume   that $f(o)\not\in{\rm{Supp}}D.$ Then
$$T_f(r, \omega)+u_D\circ f(o)=m_f(r,D)+N_f(r,D)+E_f(r,D).$$
\end{theorem}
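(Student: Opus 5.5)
The argument runs parallel to the proof of Theorem \ref{first}, with Lemma \ref{dynkin1} used in place of Lemma \ref{dynkin}. The only new ingredient is the extra volume term $-\int_{\Delta(r)}p(r,o,x)\phi\,dv$, which will become the residual function.

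First, as in the proof of Theorem \ref{first}, write $s_D=\tilde s_D e$ for the canonical section of $\mathscr O(D)$ in a local frame. The Poincar\'e--Lelong formula gives $[D]=dd^c[\log|\tilde s_D|^2]$, and together with $\omega-[D]=2dd^c[u_D]$ this yields $\omega=dd^c[2u_D+\log|\tilde s_D|^2]$. Hence $u_D$ is a difference of two subharmonic functions. The singular set of $u_D\circ f$ is therefore polar, and $u_D\circ f(o)$ is finite because $f(o)\notin{\rm Supp}\,D$. Thus $\phi=u_D\circ f$ is admissible in Lemma \ref{dynkin1}, which gives
$$\int_{\partial\Delta(r)}u_D\circ f\,d\Theta_r-u_D\circ f(o)=\frac12\int_{\Delta(r)}h_r(o,x)\Delta(u_D\circ f)\,dv-\int_{\Delta(r)}p(r,o,x)\,u_D\circ f\,dv.$$

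By definition, the first term on the left is $m_f(r,D)$ and the last term on the right is $E_f(r,D)$. For the remaining term, use the identity $\Delta\phi\,\alpha^m/(2m)=\dots$ exactly as in Theorem \ref{first}, i.e.
$$\frac12\int_{\Delta(r)}h_r\,\Delta(u_D\circ f)\,dv=\frac{2\pi^m}{(m-1)!}\int_{\Delta(r)}h_r\,dd^c[u_D\circ f]\wedge\alpha^{m-1}.$$
Splitting $2dd^c[u_D\circ f]=f^*\omega-f^*[D]$ (that is, $f^*\omega-dd^c[\log|\tilde s_D\circ f|^2]$) turns this into $T_f(r,\omega)-N_f(r,D)$. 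This step relies on the normalization $e_{f,\omega}=2m\,f^*\omega\wedge\alpha^{m-1}/\alpha^m$ and $dv=\pi^m\alpha^m/m!$, which is the same computation already used in the non-parabolic case; the weight $h_r$ in place of $g_r$ does not affect it. Rearranging gives $T_f(r,\omega)+u_D\circ f(o)=m_f(r,D)+N_f(r,D)+E_f(r,D)$.

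The one point needing care is justifying Green's identity in Lemma \ref{dynkin1} when $u_D\circ f$ has logarithmic singularities along $f^{-1}(D)$ and $f$ has an indeterminacy locus. I would handle this in the sense of distributions, as the paper does. Concretely, excise small tubular neighbourhoods of the singular set and let their radii tend to zero: the boundary contributions vanish because the singularities are at most logarithmic and the singular set has real codimension at least $2$. This is routine, so the theorem follows essentially verbatim from Theorem \ref{first}.
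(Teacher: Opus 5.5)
Your main line is the paper's own: apply Lemma \ref{dynkin1} to $\phi=u_D\circ f$, read off $m_f(r,D)$ and $E_f(r,D)$, and turn $\frac12\int_{\Delta(r)}h_r\,\Delta(u_D\circ f)\,dv$ into $T_f(r,\omega)-N_f(r,D)$ exactly as in Theorem \ref{first}. The signs and the final rearrangement are correct.

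The concrete justification in your last paragraph, however, is wrong. Excise tubes $U_\varepsilon$ of radius $\varepsilon$ around $f^{-1}(D)$.

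\begin{itemize}
\item The boundary term $\int_{\partial U_\varepsilon}(u_D\circ f)\,\partial_\nu h_r\,d\sigma$ does tend to $0$. Since $o\notin f^{-1}(D)$, $h_r$ is smooth there, so this term is $O(\varepsilon\log(1/\varepsilon))$.
\item The term $\int_{\partial U_\varepsilon}h_r\,\partial_\nu(u_D\circ f)\,d\sigma$ does not vanish. Along a component of $f^*D$ of multiplicity $k$, locally $u_D\circ f=-k\log|z_1|$ plus a smooth function. Hence $\partial_\nu(u_D\circ f)\sim k/\varepsilon$, while $\partial U_\varepsilon$ has area of order $\varepsilon$. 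The limit is a nonzero multiple of $\int_{f^*D\cap\Delta(r)}h_r\,\alpha^{m-1}$, which is precisely the counting function.
\end{itemize}

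If all boundary contributions really vanished, you would have proved Lemma \ref{dynkin1} with the pointwise Laplacian of $u_D\circ f$. Off $f^{-1}(D)$ that Laplacian only sees $f^*\omega$, so you would get the false identity $T_f(r,\omega)+u_D\circ f(o)=m_f(r,D)+E_f(r,D)$ with no $N_f$. That contradicts your next step, where you split $2dd^c[u_D\circ f]=f^*\omega-f^*[D]$ as currents.

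The correct reading is that $\Delta(u_D\circ f)$ in Lemma \ref{dynkin1} is the distributional Laplacian. The nonvanishing residue from the tubes is exactly its singular part, $-f^*[D]$ by Poincar\'e--Lelong, paired with $h_r$. Only the indeterminacy locus and the singular points of $f^{-1}(D)$, both of complex codimension at least $2$, contribute nothing in the limit. With that correction your proof is complete.
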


\subsection{Calculus Lemma}~

To establish the Calculus Lemma, we first give an upper estimate of $d\Theta_r.$

\begin{lemma}\label{Theta} For any  $0<\epsilon<1,$ we have
$$d\Theta_r\leq \frac{c_2}{2}\int_0^r\frac{1}{V(\sqrt{t})}e^{-\frac{r^2}{4(1+\epsilon)t}}\frac{dt}{\sqrt{t}}\cdot d\sigma_r$$
holds on $\partial\Delta(t)$ with $0<t\leq r,$
where $c_2>0$ is  given by $(\ref{eett}).$
\end{lemma}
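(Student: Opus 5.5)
Since $d\Theta_r=\frac12\frac{\partial h_r(o,x)}{\partial\vec\nu}d\sigma_r$ on $\partial\Delta(r)$, the plan is to bound the inward normal derivative by the gradient norm, $\frac{\partial h_r(o,x)}{\partial\vec\nu}\leq\|\nabla h_r(o,x)\|$. The subtracted constant in $h_r$ does not depend on $x$, so
$$\nabla h_r(o,x)=\nabla G_r(o,x)=2\int_0^r\nabla_x p(t,o,x)\,dt.$$
Differentiating under the integral sign gives
$$\|\nabla h_r(o,x)\|\leq 2\int_0^r\|\nabla_xp(t,o,x)\|\,dt.$$
Justifying the exchange of derivative and integral is routine away from $o$: the gradient bound decays like $e^{-c/t}$ as $t\to0$ uniformly on compact sets avoiding $o$, which gives dominated convergence.

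Next I will apply the gradient estimate of Theorem \ref{gradient}, with the same $\epsilon$ and (after enlarging $c_2$ if necessary) the same constant as in (\ref{eett}). It gives
$$\|\nabla_x p(t,o,x)\|\leq\frac{c_2}{2\sqrt t\,V(\sqrt t)}e^{-\frac{\rho(x)^2}{4(1+\epsilon)t}},$$
and substituting this yields
$$\frac12\|\nabla h_r\|\leq\frac{c_2}{2}\int_0^r\frac{1}{V(\sqrt t)}e^{-\frac{\rho(x)^2}{4(1+\epsilon)t}}\frac{dt}{\sqrt t}.$$
The factor $2$ in $G_r$ is absorbed by the normalization of $c_2$.

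The remaining step is to replace $\rho(x)$ by $r$. For this I use the first part of Theorem \ref{beta}, namely $B(r)\subseteq\Delta(r)$, so that $\rho(x)\geq r$ on $\partial\Delta(r)$. Its proof uses only the lower Gaussian bound: if $\rho(x)<r$ then $G_r(o,x)$ strictly exceeds the threshold. This holds for all $r>0$. Since the exponential is decreasing in $\rho$, we get $e^{-\rho(x)^2/(4(1+\epsilon)t)}\leq e^{-r^2/(4(1+\epsilon)t)}$, and the claimed inequality for $d\Theta_r$ follows. The phrase ``on $\partial\Delta(t)$ with $0<t\le r$'' in the statement I read as the boundary $\partial\Delta(r)$ itself, with $t$ being the integration variable. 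If it is meant for every level set inside, the same argument works verbatim, since there $\rho\geq t$.

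The main obstacle is the constant bookkeeping. The factor $1/2$ in (\ref{eett}) versus Theorem \ref{gradient}, and the $V_o$ versus $V_x$ normalisation, must be matched so that exactly $c_2/2$ appears. I would simply take $c_2$ to be the maximum of the two constants. A second, minor point is that $\partial h_r/\partial\vec\nu\geq0$, so bounding it by the gradient norm loses nothing in sign.
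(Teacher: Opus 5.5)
Your proposal is correct and follows the paper's proof essentially step for step. You bound the inward normal derivative of $h_r$ by $\|\nabla G_r(o,x)\|$, move the gradient inside $\int_0^r$, apply Theorem~\ref{gradient}, and use $\rho(x)\geq r$ on $\partial\Delta(r)$, which comes from $B(r)\subseteq\Delta(r)$. Your justification of differentiating under the integral sign is a welcome addition that the paper omits.

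One small caveat concerns your closing aside about the inner level sets $\partial\Delta(t)$, $t<r$. It does not go through verbatim, since there you only know $\rho(x)\geq t$, which gives the exponent $t^2$ rather than $r^2$. This is harmless, because the lemma is only ever used on $\partial\Delta(r)$.
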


\begin{proof} By the definitions of $d\Theta_r$ and $h_r(o,x),$  we have 
   $$d\Theta_r= \frac{1}{2}\frac{\partial h_r(o,x)}{\partial\vec{\nu}}d\sigma_r
=  \frac{1}{2}\frac{\partial G_r(o,x)}{\partial\vec{\nu}}d\sigma_r=\int_0^r \frac{\partial p(t, o,x)}{\partial\vec{\nu}}dt\cdot d\sigma_r,$$
    where  $\partial/\partial \vec\nu$ is the inward  normal derivative on $\partial \Delta(r).$ 
   By Theorem \ref{gradient}
$$d\Theta_r\leq  \int_0^r \|\nabla p(t,o,x)\|dt\cdot d\sigma_r 
\leq  \frac{c_2}{2}\int_0^r\frac{1}{V(\sqrt t)}e^{-\frac{\rho(x)^2}{4(1+\epsilon)t}}\frac{dt}{\sqrt{t}}\cdot d\sigma_r.$$
  Since  $\rho(x)\geq r$ on $\partial\Delta(r)$ by Theorem \ref{beta},  we have  the desired result. 
\end{proof}

Set
$$F(r,\delta)=\frac{c_2(1-\epsilon)}{c_1}\frac{\displaystyle\int_0^r\frac{1}{V(\sqrt{t})}e^{-\frac{r^2}{4(1+\epsilon)t}}\frac{dt}{\sqrt{t}}}{\displaystyle\left(r\int_0^{r}\frac{1}{V(\sqrt{t})}e^{-\frac{r^2}{4(1-\epsilon)t}}\frac{dt}{t}\right)^{1+\delta}},$$
where $\epsilon, c_1,c_2$ are given by $(\ref{eett}).$

We now prove  the following Calculus Lemma: 
 \begin{theorem}[Calculus Lemma]\label{calculus11}
Let $k\geq0$ be a locally integrable function on $M.$ Assume that $k$ is locally bounded at $o.$ Then for any $\delta>0,$  there exists  a subset $E_{\delta}\subseteq(0,\infty)$ of finite Lebesgue measure such that
$$\int_{\partial\Delta(r)}kd\Theta_r\leq F(r,\delta)\bigg(\int_{\Delta(r)}h_{r}(o,x)kdv\bigg)^{(1+\delta)^2}$$
holds for all $r>0$ outside $E_{\delta}.$ 
\end{theorem}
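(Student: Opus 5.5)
The plan is to follow the proof of Theorem \ref{calculus} (the non-parabolic Calculus Lemma), with two changes. First, the identity of Lemma \ref{thm4} becomes a one-sided lower bound for $h_r(o,x)$ on the level sets $\partial\Delta(t)$. Second, Corollary \ref{bbb} is replaced by the upper estimate of $d\Theta_r$ in Lemma \ref{Theta}.

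Write
$$L(t)=c_1\int_0^t\frac{1}{V(\sqrt{s})}e^{-\frac{t^2}{4(1-\epsilon)s}}ds,$$
so that $\partial\Delta(t)=\{G_t(o,x)=L(t)\}$. For $x\in\partial\Delta(t)$ with $0<t\le r$, we have $G_r(o,x)\ge G_t(o,x)=L(t)$ because $p>0$. Hence
$$h_r(o,x)\ge L(t)-L(r)=-\int_t^r L'(s)ds.$$
Differentiating under the integral gives
$$-L'(s)=\frac{c_1 s}{2(1-\epsilon)}\int_0^s\frac{1}{V(\sqrt u)}e^{-\frac{s^2}{4(1-\epsilon)u}}\frac{du}{u}-\frac{c_1}{V(\sqrt s)}e^{-\frac{s}{4(1-\epsilon)}}.$$
Call the first term $\psi(s)$. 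I would try to show that the boundary term is harmless. It is exponentially small, and the Gaussian weight in $\psi$ dominates it for large $s$, so $-L'(s)\ge(1-o(1))\psi(s)$. Alternatively one can absorb it into the constants, or into an exceptional set of finite measure if needed. This gives $h_r\ge \int_t^r\psi(s)ds$ on $\partial\Delta(t)$, up to harmless constants.

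Next, decompose $\int_{\Delta(r)}h_r k\,dv$ along the exhaustion $\{\partial\Delta(t)\}_{0<t\le r}$, exactly as in the proof of Theorem \ref{calculus}. Combined with the lower bound, this yields
$$\int_{\Delta(r)}h_r(o,x)k\,dv\ \ge\ \Lambda(r):=\int_0^r\Big(\int_t^r\psi(s)ds\Big)dt\int_{\partial\Delta(t)}k\,d\sigma_t .$$
The same computation as before gives
$$\Lambda'(r)=\psi(r)\int_0^r dt\int_{\partial\Delta(t)}k\,d\sigma_t,\qquad \frac{d}{dr}\Big(\frac{\Lambda'(r)}{\psi(r)}\Big)=\int_{\partial\Delta(r)}k\,d\sigma_r.$$
I then apply Borel's growth lemma twice, first to $\Lambda'/\psi$ and then to $\Lambda$. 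Outside a set $E_\delta$ of finite measure this gives
$$\int_{\partial\Delta(r)}k\,d\sigma_r\le \psi(r)^{-(1+\delta)}\Lambda(r)^{(1+\delta)^2}\le \psi(r)^{-(1+\delta)}\Big(\int_{\Delta(r)}h_rk\,dv\Big)^{(1+\delta)^2}.$$

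Finally, Lemma \ref{Theta} gives
$$d\Theta_r\le \frac{c_2}{2}\int_0^r\frac{1}{V(\sqrt t)}e^{-\frac{r^2}{4(1+\epsilon)t}}\frac{dt}{\sqrt t}\,d\sigma_r .$$
Multiplying this by the previous inequality yields the factor
$$\frac{c_2}{2}\int_0^r\cdots\frac{dt}{\sqrt t}\cdot\Big(\frac{2(1-\epsilon)}{c_1}\Big)^{1+\delta}\Big(r\int_0^r\cdots\frac{dt}{t}\Big)^{-(1+\delta)},$$
which is $F(r,\delta)$ up to the normalization of constants. The $\delta$-dependent constant powers can be absorbed by slightly adjusting $\delta$, or by the convention of the statement.

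The main obstacle is the first step: justifying $h_r\ge\int_t^r\psi$ on $\partial\Delta(t)$, where the positive boundary term in $L'$ must be controlled, and making the coarea-type decomposition rigorous. In the non-parabolic case $h_r$ was constant on $\partial\Delta(t)$ with gradient computed explicitly. Here it is only bounded below, so I would use the coarea formula with $t$ as the level parameter, exactly in the form the paper used before, and check that only the lower bound on $h_r$, and no gradient information, is needed.
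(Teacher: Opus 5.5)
Your argument has the same skeleton as the paper's proof: a kernel vanishing at $t=r$ bounds $h_r$ from below on $\partial\Delta(t)$, then Borel's lemma is applied twice, then Lemma~\ref{Theta}. The difference is how you get the key lower bound. The paper inserts the Li--Yau lower bound for $p$ and replaces $\rho(x)$ by $t$, obtaining
\[
B(r,t)=c_1\int_0^r\frac{1}{V(\sqrt s)}\Big(e^{-\frac{t^2}{4(1-\epsilon)s}}-e^{-\frac{r^2}{4(1-\epsilon)s}}\Big)ds .
\]
Its $r$-derivative is $\psi(r)$ plus the \emph{nonnegative} term $\frac{c_1}{V(\sqrt r)}\big(e^{-\frac{t^2}{4(1-\epsilon)r}}-e^{-\frac{r}{4(1-\epsilon)}}\big)$, so the paper never meets a negative boundary term. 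You use $G_r\ge G_t=L(t)$ on $\partial\Delta(t)$ instead. This gives the smaller kernel $L(t)-L(r)=B(r,t)-c_1\int_t^r V(\sqrt s)^{-1}e^{-\frac{t^2}{4(1-\epsilon)s}}ds$.

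Your bound is the better-founded one. By Theorem~\ref{beta}, $\rho\ge t$ on $\partial\Delta(t)$, which is the wrong direction for replacing $\rho(x)$ by $t$ in a lower bound; monotonicity of $G_r$ in $r$ costs nothing. Also, the $r$-derivative of your kernel does not depend on $t$, so $\frac{d}{dr}(\Lambda'/\psi)=\int_{\partial\Delta(r)}k\,d\sigma_r$ is an exact identity. The paper, with $B(r,t)$, only has an inequality for $\Gamma'$ before asserting the corresponding identity.

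The price is the boundary term, and here your proposal makes a false claim. The term $b(s)=\frac{c_1}{V(\sqrt s)}e^{-\frac{s}{4(1-\epsilon)}}$ is \emph{not} exponentially small compared with $\psi(s)$; the two are of the same order. For $M=\mathbb C$ (where $V(\sqrt s)=\pi s$) one finds $L(t)=\frac{c_1}{\pi}E_1\big(\frac{t}{4(1-\epsilon)}\big)$, with $E_1(z)=\int_1^\infty e^{-zy}\frac{dy}{y}$, and $\psi=2b$. So $-L'=\frac12\psi$ identically. Thus $-L'\ge(1-o(1))\psi$ fails at every large $s$, and an exceptional set cannot repair it.

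Your other fallback, absorbing into constants, is the right one, but it needs an actual estimate:
\begin{itemize}
\item Since $V(\sqrt u)\le V(\sqrt s)$ for $u\le s$, one gets $\psi(s)\ge\frac{2c_1}{V(\sqrt s)}\,zE_1(z)$ with $z=\frac{s}{4(1-\epsilon)}$.
\item Since $ze^zE_1(z)\ge\frac{z}{z+1}$, this yields $-L'(s)=\psi(s)-b(s)\ge\frac13\psi(s)$ for $s\ge12(1-\epsilon)$.
\item Run Borel's lemma with the kernel $L(t)-L(r)$ itself, so that $\Lambda'(r)=-L'(r)\int_0^rdt\int_{\partial\Delta(t)}k\,d\sigma_t$. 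Put bounded $r$ into $E_\delta$, and use $-L'\ge\frac13\psi$ at the end.
\end{itemize}
The resulting extra constant is immaterial. It can be removed exactly by applying Borel's lemma to $\lambda\int_0^r dt\int_{\partial\Delta(t)}k\,d\sigma_t$ with small $\lambda>0$, and in any case only $\log^+F\le O(r)$ is used in Corollary~\ref{calculus12}.

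Finally, the paper uses the decomposition $\int_{\Delta(r)}F\,dv=\int_0^rdt\int_{\partial\Delta(t)}F\,d\sigma_t$ in exactly this form, so there you are on the same footing as the paper. A literal coarea formula, however, carries a factor $\|\nabla\tau\|^{-1}$ for the level parameter $\tau$. So your claim that ``no gradient information is needed'' holds only within the paper's convention.
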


\begin{proof} 
 Applying  the definition of $h_r(o,x)$ and the  lower estimate of $p(t,o,x)$ in  (\ref{eett}), we deduce that  
   \begin{eqnarray*}
&& \int_{\Delta(r)} h_{r}(o,x)kdv \\ 
&=&\int_0^{r}dt\int_{\partial \Delta(t)} h_{r}(o,x)kd\sigma_t \\
&\geq & c_1\int_0^{r}dt\int_0^{r}\frac{1}{V(\sqrt{s})}\bigg(e^{-\frac{t^2}{4(1-\epsilon)s}}-e^{-\frac{r^2}{4(1-\epsilon)s}}\bigg)ds\int_{\partial \Delta(t)}kd\sigma_t \\
&=& \int_0^{r}B(r,t)dt \int_{\partial \Delta(t)}kd\sigma_t,
  \end{eqnarray*}
  where  
$$B(r,t)=c_1\int_0^{r}\frac{1}{V(\sqrt{s})}\bigg(e^{-\frac{t^2}{4(1-\epsilon)s}}-e^{-\frac{r^2}{4(1-\epsilon)s}}\bigg)ds.$$
Set 
$$\Gamma(r)=\int_0^{r}B(r,t)dt \int_{\partial \Delta(t)}kd\sigma_t.$$
Since $B(r,r)=0$ and 
   \begin{eqnarray*}
&&\frac{\partial}{\partial r}B(r,t) \\
&=& \frac{c_1}{V(\sqrt{r})}\bigg(e^{-\frac{t^2}{4(1-\epsilon)r}}-e^{-\frac{r}{4(1-\epsilon)}}\bigg)
+\frac{c_1r}{2(1-\epsilon)}\int_0^{r}\frac{1}{V(\sqrt{s})}e^{-\frac{r^2}{4(1-\epsilon)s}}\frac{ds}{s} \\
&\geq& \frac{c_1r}{2(1-\epsilon)}\int_0^{r}\frac{1}{V(\sqrt{s})}e^{-\frac{r^2}{4(1-\epsilon)s}}\frac{ds}{s}, 
   \end{eqnarray*}
we obtain
   \begin{eqnarray*}
\Gamma'(r)&=& \int_0^{r}\frac{\partial}{\partial r}B(r,t) dt \int_{\partial \Delta(t)}kd\sigma_t \\
&\geq& \frac{c_1r}{2(1-\epsilon)}\int_0^{r}\frac{1}{V(\sqrt{s})}e^{-\frac{r^2}{4(1-\epsilon)s}}\frac{ds}{s}\int_0^rdt\int_{\partial \Delta(t)}kd\sigma_t.
   \end{eqnarray*}
Set
$$\phi(r)= r\int_0^{r}\frac{1}{V(\sqrt{s})}e^{-\frac{r^2}{4(1-\epsilon)s}}\frac{ds}{s}.$$
Then
$$\frac{d}{dr}\frac{\Gamma'(r)}{\phi(r)}=\frac{c_1}{2(1-\epsilon)}\int_{\partial \Delta(r)}kd\sigma_r.$$
Using Borel's growth lemma twice,  then for any $\delta>0,$ there exists a subset $E_\delta\subseteq(0,\infty)$ of finite Lebesgue measure such that 
   \begin{eqnarray*}
\int_{\partial \Delta(r)}kd\sigma_r &\leq& \frac{2(1-\epsilon)}{c_1}\phi(r)^{-(1+\delta)}\Gamma(r)^{(1+\delta)^2}      \\
&\leq& \frac{2(1-\epsilon)}{c_1}\left(r\int_0^{r}\frac{1}{V(\sqrt{s})}e^{-\frac{r^2}{4(1-\epsilon)s}}\frac{ds}{s}\right)^{-(1+\delta)}\Gamma(r)^{(1+\delta)^2}
   \end{eqnarray*}
   holds for all $r>0$ outside $E_\delta.$
On the other hand,  Lemma \ref{Theta} gives 
$$\int_{\partial \Delta(r)}kd\Theta_r\leq \frac{c_2}{2}\int_0^r\frac{1}{V(\sqrt{t})}e^{-\frac{r^2}{4(1+\epsilon)t}}\frac{dt}{\sqrt{t}} \int_{\partial \Delta(r)}kd\sigma_r.$$
To conclude, we have 
   \begin{eqnarray*}
\int_{\partial \Delta(r)}kd\Theta_r &\leq&  \frac{c_2(1-\epsilon)}{c_1}\frac{\displaystyle\int_0^r
\frac{1}{V(\sqrt{t})}e^{-\frac{r^2}{4(1+\epsilon)t}}\frac{dt}{\sqrt{t}}}{\displaystyle\left(r\int_0^{r}\frac{1}{V(\sqrt{t})}e^{-\frac{r^2}{4(1-\epsilon)t}}\frac{dt}{t}\right)^{1+\delta}}\Gamma(r)^{(1+\delta)^2} \\
&=& F(r,\delta)\Gamma(r)^{(1+\delta)^2} \\
&\leq& F(r,\delta)\bigg(\int_{\Delta(r)}h_{r}(o,x)kdv\bigg)^{(1+\delta)^2}
   \end{eqnarray*}
      holds for all $r>0$ outside $E_\delta.$
\end{proof}

\begin{lemma}  For any $0<\epsilon<1,$ we have 
$$\log^+F(r,\delta)\leq O(r).$$
\end{lemma}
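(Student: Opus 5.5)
We have to show $\log^+F(r,\delta)\le O(r)$ for large $r$; for bounded $r$ there is nothing to prove beyond finiteness of $F$ on compact intervals, so only large $r$ matters. Since
\[
\log^+F\le \log^+(\text{numerator})+(1+\delta)\log^+\Big(\tfrac{1}{\phi(r)}\Big)+O(1),
\qquad \phi(r)=r\int_0^{r}\frac{1}{V(\sqrt{t})}e^{-\frac{r^2}{4(1-\epsilon)t}}\frac{dt}{t},
\]
it suffices to bound the numerator above by $e^{O(r)}$ and to bound $\phi(r)$ below by $e^{-O(r)}$.

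For the numerator I will use Theorem \ref{volume1}. For $t\ge1$ it gives $V(\sqrt t)\ge c\sqrt t\ge c$. For $t\le1$ it gives nothing, but the factor $e^{-r^2/(4(1+\epsilon)t)}$ is extremely small there. Near $t=0$ one has $V(\sqrt t)\sim \omega_{2m}t^{m}$, and $V$ is positive and increasing, so $1/(V(\sqrt t)\sqrt t)$ grows at most polynomially in $1/t$ as $t\to0$. That polynomial growth is killed by $e^{-r^2/(4(1+\epsilon)t)}$ with $r\ge1$. Bounding the exponential by $1$ on $[1,r]$, the numerator is therefore at most $C\int_0^1 t^{-m-1/2}e^{-1/(8t)}dt+C\sqrt r=O(\sqrt r)$. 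Hence $\log^+(\text{numerator})=O(\log r)$.

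For the lower bound on $\phi$, I restrict the integral to $t\in[r/2,r]$. On this interval Corollary \ref{volume} gives $V(\sqrt t)\le\omega_{2m}t^{m}\le\omega_{2m}r^m$, together with $1/t\ge 1/r$ and $e^{-r^2/(4(1-\epsilon)t)}\ge e^{-r/(2(1-\epsilon))}$. Therefore
\[
\phi(r)\ge r\cdot\frac{r}{2}\cdot\frac{1}{\omega_{2m}r^{m}}\cdot\frac1r\, e^{-\frac{r}{2(1-\epsilon)}},
\]
so $\log^+(1/\phi(r))\le \frac{r}{2(1-\epsilon)}+O(\log r)=O(r)$.

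Combining the two bounds gives $\log^+F(r,\delta)\le O(\log r)+(1+\delta)O(r)=O(r)$ for large $r$, and the statement holds on all of $(0,\infty)$ after adjusting constants. The only step needing care is the small-$t$ behaviour of $1/V(\sqrt t)$ in the numerator. I handle it through the local Euclidean asymptotics of small geodesic balls, equivalently the limit statement in Theorem \ref{comp}, which yields $V(\sqrt t)\ge c' t^{m}$ for $t\le1$. Everything else is elementary.
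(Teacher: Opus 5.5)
Your proof is correct and follows essentially the same route as the paper. In both, the $O(r)$ comes from restricting the denominator integral to $[r/2,r]$, where $e^{-r^2/(4(1-\epsilon)t)}\geq e^{-r/(2(1-\epsilon))}$, while the numerator is only polynomially large. The only difference is bookkeeping: the paper factors out $V(\sqrt r)$, uses the Bishop--Gromov ratio $V(\sqrt r)/V(\sqrt t)\le (r/t)^m$, and then controls the leftover powers of $V(\sqrt r)$ by polynomial volume bounds, whereas you use absolute bounds (small-ball asymptotics for $t\le 1$, Theorem \ref{volume1} for $t\ge 1$, and $V(\sqrt t)\le \omega_{2m}t^m$ on $[r/2,r]$), which works equally well.
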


\begin{proof}  Since $M$ has nonnegative Ricci curvature,  it yields  from   Theorem \ref{comp} and Theorem \ref{volume1} that 
$V(\sqrt{r})/V(\sqrt{t})\leq r^m/t^m$
for $0<t\leq r$ and $$O(r)\leq V(\sqrt{r})\leq O(r^m)$$
for $r>\epsilon_0,$
where $\epsilon>0$ is an arbitrarily small constant. 
For the numerator of $F(r,\delta),$  we have  
   \begin{eqnarray*} 
\int_0^r\frac{1}{V(\sqrt{t})}e^{-\frac{r^2}{4(1+\epsilon)t}}\frac{dt}{\sqrt{t}} &=& \frac{1}{V(\sqrt{r})} \int_0^r\frac{V(\sqrt{r})}{V(\sqrt{t})}e^{-\frac{r^2}{4(1+\epsilon)t}}\frac{dt}{\sqrt{t}} \\
&\leq&  \frac{r^m}{V(\sqrt{r})} \int_0^re^{-\frac{r^2}{4(1+\epsilon)t}}\frac{dt}{t^{m+\frac{1}{2}}} \\
&\leq& \frac{r^m}{V(\sqrt{r})}\bigg(\int_1^re^{-\frac{r^2}{4(1+\epsilon)t}}\frac{dt}{t^{m+\frac{1}{2}}}+O(1)\bigg) \\
&\leq& \frac{r^m}{V(\sqrt{r})}\big(r+O(1)\big).
   \end{eqnarray*} 
For the denominator of $F(r, \delta),$ we have  
   \begin{eqnarray*} 
\int_0^{r}\frac{1}{V(\sqrt{t})}e^{-\frac{r^2}{4(1-\epsilon)t}}\frac{dt}{t} &\geq& \int_{\frac{r}{2}}^r\frac{1}{V(\sqrt{t})}e^{-\frac{r^2}{4(1-\epsilon)t}}\frac{dt}{t} \\
&\geq&\frac{1}{V(\sqrt{r})} \int_{\frac{r}{2}}^re^{-\frac{r^2}{4(1-\epsilon)t}}\frac{dt}{t} \\
&\geq&\frac{r\log 2}{V(\sqrt{r})}e^{-\frac{r}{2(1-\epsilon)}}.
   \end{eqnarray*}
Put together the above, it is immediate that  
$$\log^+F(r,\delta)\leq O(r).$$
\end{proof}

It is therefore that 
 \begin{cor}\label{calculus12}
Let $k\geq0$ be a locally integrable function on $M.$ Assume that $k$ is locally bounded at $o.$ Then for any $\delta>0,$  there exists  a subset $E_{\delta}\subseteq(0,\infty)$ of finite Lebesgue measure such that
$$\log^+\int_{\partial\Delta(r)}kd\Theta_r\leq (1+\delta)^2\log^+\int_{\Delta(r)}h_{r}(o,x)kdv+O(r)$$
holds for all $r>0$ outside $E_{\delta}.$  
 \end{cor}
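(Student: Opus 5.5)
The plan is to take logarithms in the Calculus Lemma (Theorem \ref{calculus11}) and then control the multiplicative factor $F(r,\delta)$ with the preceding lemma, which gives $\log^+F(r,\delta)\leq O(r)$. Fix $\delta>0$ and let $E_\delta$ be the exceptional set of finite Lebesgue measure supplied by Theorem \ref{calculus11}. For $r\notin E_\delta$ we then have
$$\int_{\partial\Delta(r)}kd\Theta_r\leq F(r,\delta)\bigg(\int_{\Delta(r)}h_r(o,x)kdv\bigg)^{(1+\delta)^2}.$$

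We take $\log^+$ of both sides and use three elementary properties of $\log^+$:
\begin{itemize}
\item $\log^+$ is monotone;
\item $\log^+(ab)\leq\log^+a+\log^+b$ for $a,b\geq0$;
\item $\log^+(a^{s})=s\log^+a$ for $s>0$.
\end{itemize}
Together these give, for $r\notin E_\delta$,
$$\log^+\int_{\partial\Delta(r)}kd\Theta_r\leq \log^+F(r,\delta)+(1+\delta)^2\log^+\int_{\Delta(r)}h_r(o,x)kdv.$$

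The remaining step is to bound $\log^+F(r,\delta)$ by $O(r)$. The lemma preceding the corollary does this: its proof uses Bishop--Gromov (Theorem \ref{comp}) and Calabi--Yau (Theorem \ref{volume1}). These bound the numerator of $F$ by $r^{m}(r+O(1))/V(\sqrt r)$. They bound the inner integral in the denominator below by $\frac{r\log 2}{V(\sqrt r)}e^{-r/(2(1-\epsilon))}$, so the denominator itself is at least $\big(\frac{r^2\log 2}{V(\sqrt r)}\big)^{1+\delta}e^{-(1+\delta)r/(2(1-\epsilon))}$. Combined with $O(r)\leq V(\sqrt r)\leq O(r^m)$, the logarithm of the ratio is at most $O(\log r)+\frac{(1+\delta)r}{2(1-\epsilon)}=O(r)$, where the constant depends on $\delta$ and $\epsilon$.

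That bound is established for large $r$. Small $r$ (say $r\leq r_0$) can simply be added to the exceptional set, since a bounded interval has finite measure. With that adjustment the statement holds outside an enlarged set of finite measure.

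The main point to be careful about is the uniformity and sign of the error term. The bound for $F$ is an explicit function of $r$ that does not depend on $k$, so the $O(r)$ is independent of $k$; only $\epsilon$ and $\delta$ enter. The exponential factor $e^{-r/(2(1-\epsilon))}$ in the denominator is what forces $O(r)$ here rather than the $O(\delta\log r)$ of the non-parabolic Corollary \ref{calculus1}. Nothing deeper is needed, so the corollary follows immediately.
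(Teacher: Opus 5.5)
Your proposal is correct and is exactly the paper's argument: take $\log^+$ in Theorem~\ref{calculus11}, split with $\log^+(ab)\le\log^+a+\log^+b$ and $\log^+(a^s)=s\log^+a$, and absorb $\log^+F(r,\delta)$ using the preceding lemma $\log^+F(r,\delta)\le O(r)$. One harmless slip, inherited from that lemma's proof: restricting to $[r/2,r]$ only gives the lower bound $\frac{\log 2}{V(\sqrt r)}e^{-r/(2(1-\epsilon))}$ for the inner integral (no extra factor $r$), which changes only the $O(\log r)$ part of your estimate.
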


\subsection{Second Main Theorem}~

Below, we  prove  a Second Main Theorem in the parabolic case. 
 Let  $(N, \omega)$  be a compact  K\"ahler manifold of complex dimension not greater than that  of $M.$ 
   Let $D_1,\cdots, D_q$ be  effective divisors  on $N,$ which are  cohomologous to $\omega,$ i.e., there exist   functions $u_{D_1},\cdots, u_{D_q}\geq0$ on $N$ such that 
$$\omega-[D_j]=2dd^c[u_{D_j}], \ \ \ \  j=1,\cdots,q.$$ 
Set
$$\Psi=\frac{\omega^n}{\prod_{j=1}^q u_{D_j}^{2}e^{-2u_{D_j}}}.$$
Let $f:M\rightarrow N$ be a differentiably non-degenerate meromorphic mapping.   Define a function $\xi\geq0$ by    
$$f^*\Psi\wedge\alpha^{m-n}=\xi\alpha^m.$$
Applying  the similar arguments in  the proof of (\ref{gjj}) in Theorem \ref{main},  we can obtain 
 \begin{eqnarray}\label{gjj22}
&&  qT_f(r,\omega)+T_f(r, K_N)+T(r,\mathscr R)-\sum_{j=1}^q\overline{N}_f(r, D)  \\
&\leq& q\log^+T_f(r,\omega)+ \frac{1}{2}\int_{\partial \Delta(r)}\log \xi d\Theta_r+\frac{1}{2}\int_{\Delta(r)}p(r,o,x)\log \xi  dv \ \big\|. \nonumber
 \end{eqnarray}
 \ \ \ \   In order to give an upper estimate of 
   \begin{equation}\label{wpp}
\int_{\partial \Delta(r)}\log \xi d\Theta_r+\int_{\Delta(r)}p(r,o,x)\log \xi  dv
\end{equation} 
and thus establish the   Second Main Theorem, we  need some lemmas.

\begin{lemma}\label{mei1} For  any $0<\epsilon<1,$ we have 
$$h_{\theta r}(o,x)\geq \frac{c_1}{2}\Big(1-e^{-\frac{3}{16(1-\epsilon)}}\Big)\frac{r}{V(\sqrt{r})}e^{-\frac{\theta^2 r}{8(1-\epsilon)}}$$
holds on $\Delta(r)$ for all sufficiently large $r>0,$ where $\theta=2(1+\epsilon)/(1-\epsilon)$ and $c_1$ is given by in $(\ref{eett}).$ 
\end{lemma}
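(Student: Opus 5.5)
The bound is a pointwise lower estimate for $h_{\theta r}(o,x)$ when $x\in\Delta(r)$. The plan is to locate $x$ using Corollary \ref{domain}, bound $G_{\theta r}(o,x)$ from below with the lower heat kernel estimate in (\ref{eett}), and bound the subtracted constant from above. For $r$ large, Corollary \ref{domain} gives $\Delta(r)\subseteq B(\beta r)$, so $\rho(x)\leq\beta r=\theta r/2$ on $\Delta(r)$. With $R=\theta r$, the lower bound in (\ref{eett}) yields
$$h_{R}(o,x)\geq c_1\int_0^{R}\frac{1}{V(\sqrt t)}\Big(e^{-\frac{\rho(x)^2}{4(1-\epsilon)t}}-e^{-\frac{R^2}{4(1-\epsilon)t}}\Big)dt\geq c_1\int_0^{R}\frac{1}{V(\sqrt t)}e^{-\frac{R^2}{16(1-\epsilon)t}}\Big(1-e^{-\frac{3R^2}{16(1-\epsilon)t}}\Big)dt,$$
where we used $\rho(x)^2\leq R^2/4$. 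The integrand is nonnegative, so we may restrict the $t$-integral to any subinterval we like.

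Next we choose that subinterval to produce the stated constants. The target contains $e^{-\theta^2 r/(8(1-\epsilon))}$ and a factor $1-e^{-3/(16(1-\epsilon))}$. The exponent $R^2/(16(1-\epsilon)t)$ equals $\theta^2 r/(8(1-\epsilon))$ exactly when $t=\theta r/2=\beta r$. Similarly, $3R^2/(16(1-\epsilon)t)\geq 3/(16(1-\epsilon))$ holds as long as $t\leq R^2=\theta^2r^2$, which is true on the whole range since $R\leq R^2$ for large $r$.

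So we restrict to $t\in[r/2,r]$. Because $\beta\geq1$, we have $t\leq r\leq\beta r$, which gives $e^{-R^2/(16(1-\epsilon)t)}\geq e^{-\theta^2 r^2/(16(1-\epsilon)t)}$. Here a problem appears: the exponent is quadratic in $r$ divided by $t$. For $t\sim r$ it becomes $\theta^2 r/(16(1-\epsilon))\cdot(r/t)$. On $[r/2,r]$, $r/t\leq2$, so the exponent is at most $\theta^2 r/(8(1-\epsilon))$, which is exactly the claimed factor. Good: restricting to $[r/2,r]$ gives
$$e^{-\frac{R^2}{16(1-\epsilon)t}}\geq e^{-\frac{\theta^2 r}{8(1-\epsilon)}},\qquad 1-e^{-\frac{3R^2}{16(1-\epsilon)t}}\geq 1-e^{-\frac{3}{16(1-\epsilon)}}.$$
For the volume factor, monotonicity of $V$ gives $1/V(\sqrt t)\geq 1/V(\sqrt r)$ for $t\leq r$. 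Integrating over an interval of length $r/2$ produces the factor $\frac{r}{2V(\sqrt r)}$, and the total is the claimed bound $\frac{c_1}{2}(1-e^{-3/(16(1-\epsilon))})\frac{r}{V(\sqrt r)}e^{-\theta^2r/(8(1-\epsilon))}$.

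The main obstacle is the first step. We must justify $\rho(x)\leq\theta r/2$ on $\Delta(r)$ using only the "sufficiently large $r$" version of Corollary \ref{domain}; this is where the largeness assumption enters. We also need to check that $[r/2,r]\subseteq[0,\theta r]$, which holds since $\theta>1$. Once the distance bound is in place, the remaining steps are elementary monotonicity estimates on the exponentials and on $V$.
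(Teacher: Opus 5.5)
Your argument is correct and is essentially the paper's proof. Both get $\rho(x)\le\theta r/2=\beta r$ on $\Delta(r)$ from Theorem~\ref{beta} (equivalently Corollary~\ref{domain}), apply the lower heat kernel bound, and factor the difference as $e^{-\theta^2r^2/(16(1-\epsilon)t)}\big(1-e^{-3\theta^2r^2/(16(1-\epsilon)t)}\big)$. Both then restrict to $t\in[r/2,r]$ using $V(\sqrt t)\le V(\sqrt r)$; the paper passes through $[0,r]$ first, which changes nothing.

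One harmless slip: $\theta^2r^2/(16(1-\epsilon)t)$ equals $\theta^2r/(8(1-\epsilon))$ at $t=r/2$, not at $t=\beta r$. The bound you actually use on $[r/2,r]$, namely $r/t\le 2$, is the correct one.
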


\begin{proof}  By  Theorem \ref{beta},  we obtain    $\rho(x)\leq\theta r/2$ on $\Delta(r)$ for sufficiently large $r>0.$ 
 Applying the  lower estimate of $p(s,o,x)$ in (\ref{eett}),  it yields  that   
   \begin{eqnarray*} 
h_{\theta r}(o,x)&=& 2\int_0^{\theta r}p(s, o,x)ds-c_1\int_0^{\theta r}\frac{1}{V(\sqrt{s})}e^{-\frac{\theta^2 r^2}{4(1-\epsilon)s}}ds \\
&\geq& c_1\int_0^{\theta r}\frac{1}{V(\sqrt{s})}e^{-\frac{\rho(x)^2}{4(1-\epsilon)s}}ds-c_1\int_0^{\theta r}\frac{1}{V(\sqrt{s})}e^{-\frac{\theta^2 r^2}{4(1-\epsilon)s}}ds \\
&\geq&  \frac{c_1}{V(\sqrt{r})}\int_0^{r}\bigg(e^{-\frac{\theta^2 r^2}{16(1-\epsilon)s}}-e^{-\frac{\theta^2 r^2}{4(1-\epsilon)s}}\bigg)ds \\
&\geq&  \frac{c_1}{V(\sqrt{r})}\int_0^{r}e^{-\frac{\theta^2 r^2}{16(1-\epsilon)s}}\Big(1-e^{-\frac{3r}{16(1-\epsilon)}}\Big)ds  \\
   \end{eqnarray*} 
on  $\Delta(r).$    Hence, for   sufficiently large  $r>1$
      \begin{eqnarray*} 
   h_{\theta r}(o,x)&\geq& \frac{c_1}{V(\sqrt{r})}\int_0^{r}e^{-\frac{\theta^2 r^2}{16(1-\epsilon)s}}\Big(1-e^{-\frac{3}{16(1-\epsilon)}}\Big)ds \\
 &=& \Big(1-e^{-\frac{3}{16(1-\epsilon)}}\Big)\frac{c_1}{V(\sqrt{r})}\int_0^{r}e^{-\frac{\theta^2 r^2}{16(1-\epsilon)s}}ds \\
&\geq& \Big(1-e^{-\frac{3}{16(1-\epsilon)}}\Big)\frac{c_1}{V(\sqrt{r})}\int_{\frac{r}{2}}^{r}e^{-\frac{\theta^2 r^2}{16(1-\epsilon)s}}ds \\
&\geq& \Big(1-e^{-\frac{3}{16(1-\epsilon)}}\Big)\frac{c_1}{V(\sqrt{r})}\int_{\frac{r}{2}}^{r}e^{-\frac{\theta^2 r}{8(1-\epsilon)}}ds \\
&=& \Big(1-e^{-\frac{3}{16(1-\epsilon)}}\Big)\frac{c_1r}{2V(\sqrt{r})}e^{-\frac{\theta^2 r}{8(1-\epsilon)}}
   \end{eqnarray*} 
   on $\Delta(r).$ 
\end{proof}

\begin{cor}\label{mei2}  For  any $0<\epsilon<1,$ we have
$$\log^+\int_{\Delta(r)}\xi^{\frac{1}{n}}dv\leq \log^+\int_{\Delta(\theta r)}h_{\theta r}(o,x)\xi^{\frac{1}{n}}dv+O(r),$$
 where $\theta=2(1+\epsilon)/(1-\epsilon).$  
\end{cor}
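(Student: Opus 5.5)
The plan is to use Lemma \ref{mei1} as a pointwise lower bound on $\Delta(r)$ and then enlarge the domain of integration. Fix $0<\epsilon<1$, put $\theta=2(1+\epsilon)/(1-\epsilon)$, and take $r$ large enough that Lemma \ref{mei1} applies. By the monotonicity of the exhaustion ($\overline{\Delta(r_1)}\subseteq\Delta(r_2)$ for $r_1<r_2$), together with $\theta>1$, we have $\Delta(r)\subseteq\Delta(\theta r)$. On $\Delta(\theta r)$ we also have $h_{\theta r}(o,x)>0$ and $\xi^{1/n}\ge 0$. Set
$$\kappa(r)=\frac{c_1}{2}\Big(1-e^{-\frac{3}{16(1-\epsilon)}}\Big)\frac{r}{V(\sqrt r)}e^{-\frac{\theta^2 r}{8(1-\epsilon)}}.$$
Lemma \ref{mei1} gives $h_{\theta r}(o,x)\ge \kappa(r)$ on $\Delta(r)$. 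Hence
$$\int_{\Delta(r)}\xi^{\frac1n}dv\le \frac{1}{\kappa(r)}\int_{\Delta(r)}h_{\theta r}(o,x)\xi^{\frac1n}dv\le \frac{1}{\kappa(r)}\int_{\Delta(\theta r)}h_{\theta r}(o,x)\xi^{\frac1n}dv.$$

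Next I take $\log^+$ of both sides. Using $\log^+(ab)\le\log^+a+\log^+b$, this gives
$$\log^+\int_{\Delta(r)}\xi^{\frac1n}dv\le \log^+\int_{\Delta(\theta r)}h_{\theta r}\xi^{\frac1n}dv+\log^+\frac{1}{\kappa(r)}.$$
So everything reduces to showing $\log^+(1/\kappa(r))=O(r)$. Taking logarithms,
$$\log\frac{1}{\kappa(r)}=\frac{\theta^2 r}{8(1-\epsilon)}+\log V(\sqrt r)-\log r+O(1).$$
By Corollary \ref{volume}, $V(\sqrt r)\le O(r^{m})$, so $\log V(\sqrt r)=O(\log r)$. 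The whole expression is therefore $O(r)$, and its positive part is too.

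The remaining issue is small $r$. For $r$ in a bounded range $(0,r_0]$, $\Delta(r)$ lies in the precompact set $\Delta(r_0)$. I would check that $\xi^{1/n}$ is locally integrable there, which follows from inequality (\ref{oppo}) as in the nonparabolic case, or else simply interpret the statement asymptotically for large $r$, as the paper does with Theorem \ref{beta}. Either way the left side is then bounded for small $r$, and the $O(r)$ term absorbs it.

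I expect no real obstacle. The one point to watch is that Lemma \ref{mei1} only holds for sufficiently large $r$, which is exactly why the small-$r$ range has to be handled separately as above.
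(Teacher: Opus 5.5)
Your proposal is correct and is essentially the paper's proof. Both arguments use Lemma \ref{mei1} as a pointwise lower bound for $h_{\theta r}(o,x)$ on $\Delta(r)\subseteq\Delta(\theta r)$, enlarge the domain of integration to $\Delta(\theta r)$, and observe that the logarithm of the reciprocal constant, $\frac{\theta^2 r}{8(1-\epsilon)}+\log\frac{V(\sqrt r)}{r}+O(1)$, is $O(r)$ by the volume bound.

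Your small-$r$ discussion is not needed, since the paper reads $O(r)$ asymptotically as $r\to\infty$. Taken literally it is also slightly off: a bounded positive constant is not $O(r)$ as $r\to 0$.
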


\begin{proof}  When $r>0$ is large enough,   it yields  from Lemma \ref{mei1} that 
   \begin{eqnarray*} 
   \int_{\Delta(r)}\xi^{\frac{1}{n}}dv&=& \int_{\Delta(r)}\frac{h_{\theta r}(o,x)}{h_{\theta r}(o,x)}\xi^{\frac{1}{n}}dv \\
   &\leq& \frac{2}{c_1}\Big(1-e^{-\frac{3}{16(1-\epsilon)}}\Big)^{-1}\frac{V(\sqrt{r})}{r}e^{\frac{\theta^2 r}{8(1-\epsilon)}}\int_{\Delta(\theta r)}h_{\theta r}(o,x)\xi^{\frac{1}{n}}dv. 
   \end{eqnarray*} 
It is therefore 
$$\log^+\int_{\Delta(r)}\xi^{\frac{1}{n}}dv\leq \log^+\int_{\Delta(\theta r)}h_{\theta r}(o,x)\xi^{\frac{1}{n}}dv+O(r).$$
\end{proof}

\begin{lemma}\label{wppp}  For  any $0<\epsilon<1,$ we have
$$\int_{\Delta(r)}p(r,o,x)\log\xi^{\frac{1}{n}}dv\leq \frac{c_2\theta^{2m}r^m}{2^{2m+1}}\bigg(\log^+\int_{\Delta(\theta r)}h_{\theta r}(o,x)\xi^{\frac{1}{n}}dv +O(r)\bigg),$$
 where $\theta=2(1+\epsilon)/(1-\epsilon)$ and $c_2$ is given  by  $(\ref{eett}).$
\end{lemma}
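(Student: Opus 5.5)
We bound $p(r,o,x)$ pointwise from above on $\Delta(r)$, then apply Jensen's inequality to the concave function $\log$ and finish with Corollary \ref{mei2}.

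\emph{Step 1: a pointwise bound on the heat kernel.} By the upper estimate in (\ref{eett}),
$$p(r,o,x)\leq \frac{c_2}{2V(\sqrt r)}.$$
Lower-bounding $V(\sqrt r)$ directly by $O(r)$ (Theorem \ref{volume1}) is too crude here, because the lemma's constant involves $\theta^{2m}r^m/2^{2m}=(\theta/2)^{2m}r^m$. The intended route is presumably to compare volumes. By Corollary \ref{domain}, $\Delta(r)\subseteq B(\beta r)=B(\theta r/2)$ for large $r$. Bishop--Gromov (Theorem \ref{comp}) in real dimension $2m$ then gives
$$V(\theta r/2)\leq (\theta r/2)^{2m}\,V(\sqrt r)/(\sqrt r)^{2m}.$$
This yields a bound of the form
$$\mathrm{Vol}(\Delta(r))\,\sup_{\Delta(r)}p(r,o,\cdot)\leq \frac{c_2}{2}\cdot\frac{\theta^{2m}r^{2m}}{2^{2m}r^m}=\frac{c_2\theta^{2m}r^m}{2^{2m+1}}.$$
This is exactly the constant in the statement, which confirms the route.

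\emph{Step 2: reduce to a log of an integral.} Write $\mu=p(r,o,x)\,dv$ on $\Delta(r)$. By Corollary \ref{dynkin2}, its total mass satisfies $\mu(\Delta(r))\leq 1$. Since the term $\log\xi^{1/n}$ may be negative, it is cleanest to use $\log\leq\log^+$ and work with the positive part. Set $K=\sup_{\Delta(r)}p(r,o,\cdot)$ and $W=\mathrm{Vol}(\Delta(r))$. Then
$$\int_{\Delta(r)}p\log^+\xi^{1/n}dv\leq K\int_{\Delta(r)}\log^+\xi^{1/n}dv.$$
Apply Jensen's inequality with respect to the normalized measure $dv/W$, using that $\log(1+u)$ is concave. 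This gives
$$\int_{\Delta(r)}\log^+\xi^{1/n}dv\leq W\log\Big(1+W^{-1}\int_{\Delta(r)}\xi^{1/n}dv\Big).$$
Hence the left side is bounded by
$$KW\Big(\log^+\int_{\Delta(r)}\xi^{1/n}dv+\log^+W^{-1}+\log 2\Big).$$
Since $W\geq V(r)\geq cr$ by Corollary \ref{domain} and Theorem \ref{volume1}, the term $\log^+W^{-1}$ is $O(1)$ for large $r$.

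\emph{Step 3: conclude.} Corollary \ref{mei2} converts $\log^+\int_{\Delta(r)}\xi^{1/n}dv$ into
$$\log^+\int_{\Delta(\theta r)}h_{\theta r}(o,x)\xi^{1/n}dv+O(r).$$
Combining this with $KW\leq c_2\theta^{2m}r^m/2^{2m+1}$ from Step 1 gives the claimed inequality; the additive constants are absorbed into $O(r)$.

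\emph{Main obstacle.} The delicate point is Step 1. The estimate $KW$ requires $W=\mathrm{Vol}(\Delta(r))\leq V(\theta r/2)$ to be compared against $V(\sqrt r)$. Bishop--Gromov gives $V(R)/V(\rho)\leq (R/\rho)^{2m}$ for $R\geq\rho$, which applies since $\theta r/2\geq\sqrt r$ for large $r$. One must also make sure that the real dimension $2m$ is used, with exponents matching the stated constant. If an exact match fails, I would accept a constant of the same order $O(r^m)$, which suffices for Theorem \ref{ttt1}.
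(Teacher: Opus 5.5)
Your proposal is correct and follows essentially the paper's route: the pointwise Li--Yau bound $p(r,o,x)\le c_2/(2V(\sqrt r))$ on $\Delta(r)$, Jensen's inequality over $\Delta(r)$, the comparison $\mathrm{Vol}(\Delta(r))/V(\sqrt r)\le V(\beta r)/V(\sqrt r)\le\beta^{2m}r^m=\theta^{2m}r^m/2^{2m}$, and finally Corollary \ref{mei2}. Your passage to $\log^+$ via the concavity of $\log(1+u)$ is a small but genuine improvement: the paper's chain bounds $p\log\xi^{1/n}$ by $\sup p\cdot\log\xi^{1/n}$, drops the Gaussian factor, and enlarges the volume prefactor in front of a logarithm, and each of these steps tacitly assumes $\log\xi^{1/n}\ge0$.
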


\begin{proof}  Note that   $\rho(x)\geq t$ on $\partial\Delta(t)$ since Theorem \ref{beta}.  Applying   the upper estimate of $p(r, o, x)$ in  (\ref{eett}), we arrive at   
$$p(r,o,x)\leq \frac{c_2}{2V(\sqrt{r})}e^{-\frac{t^2}{4(1+\epsilon)r}}$$
on $\partial\Delta(t).$
Invoking  Corollary \ref{domain} and Jensen's inequality,  we are led to 
   \begin{eqnarray*} 
 \int_{\Delta(r)}p(r,o,x)\log\xi^{\frac{1}{n}}dv 
&=& \int_0^rdt\int_{\partial\Delta(t)}p(r,o,x)\log\xi^{\frac{1}{n}}d\sigma_t \\
&\leq&  \frac{c_2}{2V(\sqrt{r})} \int_0^re^{-\frac{t^2}{4(1+\epsilon)r}}dt\int_{\partial\Delta(t)}\log\xi^{\frac{1}{n}}d\sigma_t \\
&\leq&  \frac{c_2}{2V(\sqrt{r})} \int_0^rdt\int_{\partial\Delta(t)}\log\xi^{\frac{1}{n}}d\sigma_t \\
&=&  \frac{c_2}{2V(\sqrt{r})} \int_{\Delta(r)}\log\xi^{\frac{1}{n}}dv \\
&\leq&  \frac{c_2{\rm{Vol}}(\Delta(r))}{2V(\sqrt{r})} \log\int_{\Delta(r)}\xi^{\frac{1}{n}}\frac{dv}{{\rm{Vol}}(\Delta(r))} \\
&\leq&  \frac{c_2V(\beta r)}{2V(\sqrt{r})} \log\int_{\Delta(r)}\xi^{\frac{1}{n}}\frac{dv}{V(r)} \\
&\leq&  \frac{c_2V(\beta r)}{2V(\sqrt{r})}\bigg(\log^+\int_{\Delta(r)}\xi^{\frac{1}{n}}dv +O(1)\bigg). 
   \end{eqnarray*} 
Since $M$ has nonnegative Ricci curvature,    Theorem \ref{comp} gives  
$$\frac{V(\beta r)}{V(\sqrt{r})}\leq \frac{\beta^{2m}r^{2m}}{r^m}=\beta^{2m}r^m=\frac{\theta^{2m}r^m}{2^{2m}}.$$
It is therefore 
$$\int_{\Delta(r)}p(r,o,x)\log\xi^{\frac{1}{n}}dv\leq \frac{c_2\theta^{2m}r^m}{2^{2m+1}}\bigg(\log^+\int_{\Delta(r)}\xi^{\frac{1}{n}}dv +O(1)\bigg).$$
Using  Corollary \ref{mei2} again, then we have the lemma proved.
\end{proof}

\begin{theorem}[Second Main Theorem]\label{main220}   Let $M$ be a parabolic complete noncompact   K\"ahler manifold with nonnegative Ricci curvature. 
 Let $(N, \omega)$ be a compact  K\"ahler manifold of complex dimension not greater than that  of $M.$
 Let $D_1,\cdots, D_q$ be effective divisors in general position on $N$ such that each $D_j$ is cohomologous to $\omega.$
  Let $f:M\rightarrow N$ be a differentiably non-degenerate meromorphic mapping.  Assume that $q\omega-{\rm Ric}(\omega^n)>0.$ Then for any $0<\epsilon<1,$ we have 
  $$qT_f(r, \omega)+T_f(r, K_N)+T(r, \mathscr R)\leq \sum_{j=1}^q\overline N_f(r,D_j)+O\left(r^m\log^+T_f(\theta r,\omega)\right) \big\|$$
holds,  where $\theta=2(1+\epsilon)/(1-\epsilon).$  
\end{theorem}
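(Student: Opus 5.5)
The plan is to start from inequality (\ref{gjj22}). It bounds $qT_f(r,\omega)+T_f(r,K_N)+T(r,\mathscr R)-\sum_j\overline N_f(r,D_j)$ by $q\log^+T_f(r,\omega)$ plus one half of the quantity (\ref{wpp}). Since $\log^+T_f(r,\omega)\le\log^+T_f(\theta r,\omega)+O(1)$ by monotonicity of $T_f$, it suffices to show that each of the two terms in (\ref{wpp}) is at most $O\big(r^m\log^+T_f(\theta r,\omega)\big)$ outside a set of finite measure. Both terms are handled through $\xi^{1/n}$ via pointwise inequality (\ref{oppo}). That inequality gives $\xi^{1/n}\le C\,(-f^*{\rm Ric}\Psi\wedge\alpha^{m-1})/\alpha^m$, and hence
\[\int_{\Delta(s)}h_s(o,x)\xi^{\frac1n}dv\le C'\,T_f(s,-{\rm Ric}\Psi)\le O\big(T_f(s,\omega)\big),\]
using compactness of $N$ as in the proof of Theorem \ref{main}.

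\emph{Boundary term.} Write $\log\xi=n\log\xi^{1/n}$. Since $\Theta_r$ is a positive measure of total mass at most $1$ (Corollary \ref{dynkin2}), Jensen's inequality applied to the normalized measure, together with $\log\le\log^+$ and the fact that $\Theta_r(\partial\Delta(r))\log\big(1/\Theta_r(\partial\Delta(r))\big)$ is bounded, gives
\[\int_{\partial\Delta(r)}\log\xi\, d\Theta_r\le n\log^+\int_{\partial\Delta(r)}\xi^{\frac1n}d\Theta_r+O(1).\]
Corollary \ref{calculus12} with $k=\xi^{1/n}$ then bounds this by $n(1+\delta)^2\log^+\int_{\Delta(r)}h_r\xi^{1/n}dv+O(r)$, which is $O(\log^+T_f(r,\omega))+O(r)$ off a finite-measure set. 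For $m\ge1$ this is absorbed into $O\big(r^m\log^+T_f(\theta r,\omega)\big)$, provided $T_f(\theta r,\omega)$ is eventually larger than a fixed constant $>1$, which holds for nonconstant, indeed nondegenerate, $f$. Otherwise the $O(r)$ would be kept as an additional $O(r^m)$, which is harmless.

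\emph{Interior term.} Apply Lemma \ref{wppp} directly. It gives
\[\int_{\Delta(r)}p(r,o,x)\log\xi\,dv\le n\frac{c_2\theta^{2m}r^m}{2^{2m+1}}\Big(\log^+\int_{\Delta(\theta r)}h_{\theta r}\xi^{\frac1n}dv+O(r)\Big),\]
and the inner integral is $O(T_f(\theta r,\omega))$ by the bound above. This yields $O\big(r^m\log^+T_f(\theta r,\omega)\big)+O(r^{m+1})$.

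\emph{Main obstacle.} The main obstacle is the stray $O(r^{m+1})$ coming from Corollary \ref{mei2}, since it is not literally dominated by $r^m\log^+T_f(\theta r,\omega)$. I would first try to absorb it by showing $\log T_f(\theta r,\omega)\ge c\,r$, but that fails in general. The alternative is to sharpen Lemma \ref{mei1}: compare $\int_{\Delta(r)}\xi^{1/n}dv$ with $\int_{\Delta(\theta r)}h_{\theta r}\xi^{1/n}dv$ using that $h_{\theta r}$ is bounded below on $\Delta(r)$ by a quantity whose logarithm is $O(\log r)$ once the exponent is handled with $s$ near $\theta r$ rather than $s\le r$. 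If that works, the error becomes $O(r^m\log r)$, which I would accept inside the $O(\cdot)$, as in the stated form of the theorem. Combining the two estimates with (\ref{gjj22}) then gives the theorem.
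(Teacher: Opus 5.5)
You follow the paper's route exactly: (\ref{gjj22}), then Jensen plus Corollary~\ref{calculus12} for the boundary term, then Lemma~\ref{wppp} for the interior term. You are right that the interior estimate leaves a remainder $r^m\cdot O(r)=O(r^{m+1})$, coming from the $O(r)$ of Corollary~\ref{mei2}, which is not dominated by $r^m\log^+T_f(\theta r,\omega)$; the paper's final display absorbs it without justification. But your proposed repair cannot work, so the proposal does not close the argument.

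The reason is that $h_{\theta r}$ really is exponentially small near $\partial\Delta(r)$. By Theorem~\ref{beta}, $\rho\geq r$ on $\partial\Delta(r)$. For every $s\leq\theta r$ the exponent in the Gaussian upper bound for $p(s,o,x)$ is then at least $r/(4(1+\epsilon)\theta)$. Hence on $\partial\Delta(r)$
\[
h_{\theta r}(o,x)\leq G_{\theta r}(o,x)\leq c_2\int_0^{\theta r}\frac{1}{V(\sqrt{s})}e^{-\frac{r^2}{4(1+\epsilon)s}}ds\leq C\sqrt{r}\,e^{-\frac{r}{8(1+\epsilon)\theta}}.
\]
Time $\theta r$ corresponds only to the diffusion length $\sqrt{\theta r}\ll r$. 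So $\log\big(1/\inf_{\Delta(r)}h_{\theta r}(o,\cdot)\big)\geq cr$ no matter how the $s$-integral is split. Consequently no estimate $\int_{\Delta(r)}k\,dv\leq C(r)\int_{\Delta(\theta r)}h_{\theta r}(o,x)k\,dv$ valid for general $k\geq0$ can have $\log C(r)=o(r)$. Moreover, even an $O(r^m\log r)$ remainder would not lie in $O(r^m\log^+T_f(\theta r,\omega))$, since $T_f$ may grow only like $\log r$ (e.g.\ for rational functions on $\mathbb C$).

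The missing idea is to keep the Gaussian weight of $p(r,o,x)$ rather than discard it as Lemma~\ref{wppp} does. That step also multiplies the bound $p\leq c_2/(2V(\sqrt r))$ against $\log\xi^{1/n}$, which may be negative.

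\emph{Jensen step.} The measure $p(r,o,x)dv$ on $\Delta(r)$ has mass at most $1$ by Corollary~\ref{dynkin2}. Jensen with respect to it gives $\int_{\Delta(r)}p\log\xi\,dv\leq n\log^+\int_{\Delta(r)}p\,\xi^{1/n}dv+O(1)$.

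\emph{Pointwise comparison.} Compare $p(r,o,\cdot)$ with $h_{\theta r}(o,\cdot)$ on $\Delta(r)\subseteq B(\beta r)$ (Corollary~\ref{domain}).
\begin{itemize}
\item For $s\in[\beta r,\theta r]$ one has $(1-\epsilon)s\geq(1+\epsilon)r$. So the Gaussian lower bound gives $G_{\theta r}(o,x)\geq c_1\beta r\,V(\sqrt{\theta r})^{-1}e^{-\rho(x)^2/(4(1+\epsilon)r)}$.
\item The constant subtracted in $h_{\theta r}$ is $O\big(\sqrt{r}\,e^{-(1-\eta)\theta r/(4(1-\epsilon))}\big)$ for any $\eta\in(0,1)$. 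Since $\theta/(4(1-\epsilon))=2\beta^2/(4(1+\epsilon))$ and $\rho\leq\beta r$, it is negligible once $\eta<1/2$.
\item Together with the Gaussian upper bound and $V(\sqrt{\theta r})\leq\theta^mV(\sqrt r)$, this gives $p(r,o,x)\leq (C/r)\,h_{\theta r}(o,x)$ on $\Delta(r)$ for large $r$.
\end{itemize}

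\emph{Conclusion.} Via (\ref{oppo}), $\int_{\Delta(r)}p\log\xi\,dv\leq n\log^+T_f(\theta r,\omega)+O(1)$, with no $r^m$ factor at all. The total remainder is then $O(\log^+T_f(\theta r,\omega)+r)$. This does lie inside the stated $O(r^m\log^+T_f(\theta r,\omega))$: parabolicity forces $h_R(o,x)\to\infty$ pointwise, hence $T_f(\theta r,\omega)\to\infty$ for nonconstant $f$.
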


\begin{proof} 
As noted that we need to give an  upper estimate of (\ref{wpp}). 
By Corollary \ref{dynkin2} and Jensen's inequality 
 \begin{eqnarray*}
\int_{\partial \Delta(r)}\log \xi d\Theta_r &\leq& n\Theta_r(\partial\Delta(r))\log\int_{\partial \Delta(r)}\xi^{\frac{1}{n}} \frac{d\Theta_r}{ \Theta_r(\partial\Delta(r))} \\
&\leq& n\log\int_{\partial \Delta(r)}\xi^{\frac{1}{n}} d\Theta_r+n\Theta_r(\partial\Delta(r))\log\frac{1}{\Theta_r(\partial\Delta(r))} \\
&\leq& n\log^+\int_{\partial \Delta(r)}\xi^{\frac{1}{n}} d\Theta_r+O(1).
 \end{eqnarray*}
Again, combined with Corollary \ref{calculus12} and (\ref{oppo}), we deduce that 
 \begin{eqnarray*}
 \int_{\partial \Delta(r)}\log \xi d\Theta_r 
&\leq&  n\log^+\int_{\partial \Delta(r)}\xi^{\frac{1}{n}} d\Theta_r+O(1) \\
&\leq& n(1+\delta)^2\log^+ \int_{\Delta(r)}h_{r}(o,x)\xi^{\frac{1}{n}}dv+O(r) \\
&\leq& n(1+\delta)^2\log^+ \int_{\Delta(r)}h_{r}(o,x)\frac{-f^*{\rm Ric\Psi}\wedge\alpha^{m-1}}{\alpha^m}dv+O(r) \\
&\leq& n(1+\delta)^2\log^+ T_f(r, -{\rm Ric}\Psi)+O(r) \\
&\leq & n(1+\delta)^2\log^+ T_f(r, \omega)+O(r) \ \big\|.
 \end{eqnarray*}
 For the second term in (\ref{wpp}), we derive  from Lemma \ref{wppp} that 
  \begin{eqnarray*}
  \int_{\Delta(r)}p(r,o,x)\log\xi dv 
&=& n\int_{\Delta(r)}p(r,o,x)\log\xi^{\frac{1}{n}}dv \\ 
&\leq& \frac{nc_2\theta^{2m}r^m}{2^{2m+1}}\bigg(\log^+\int_{\Delta(\theta r)}h_{\theta r}(o,x)\xi^{\frac{1}{n}}dv +O(r)\bigg).
 \end{eqnarray*}
To conclude, we obtain  
$$ \int_{\partial \Delta(r)}\log \xi d\Theta_r+\int_{\Delta(r)}p(r,o,x)\log \xi  dv\leq O\big(r^m\log^+T_f(\theta r,\omega)\big) \ \big\|.$$
By this with (\ref{gjj22}), we finally completes the proof. 
\end{proof}



\end{document}